\documentclass[12pt,letterpaper]{article}

\usepackage{amssymb,amsfonts,amscd,amsthm}
\usepackage{thmtools}
\usepackage[all,arc]{xy}
\usepackage{bbm}
\usepackage{mathrsfs}
\usepackage{tikz}
\usepackage[left=0.9in,top=0.9in,right=0.9in,bottom=0.9in]{geometry}
\usepackage{mathtools}
\usepackage{hyperref}

\usepackage{color}
\usepackage{graphicx}
\usepackage{enumerate}
\graphicspath{ {TexImages/} }
\usepackage{enumitem}
\usepackage{subcaption}

\hypersetup{
	colorlinks,
	citecolor=blue,
	filecolor=blue,
	linkcolor=blue,
	urlcolor=blue,
	linktocpage
}
\usepackage{cleveref}

\newtheorem{theorem}{Theorem}[section]
\newtheorem{corollary}[theorem]{Corollary}
\newtheorem{proposition}[theorem]{Proposition}
\newtheorem{lemma}[theorem]{Lemma}
\newtheorem{claim}[theorem]{Claim}

\newtheorem{problem}[theorem]{Problem}

\newtheorem{conjecture}[theorem]{Conjecture}
\newtheorem{observation}[theorem]{Observation}

\theoremstyle{definition}
\newtheorem{definition}{Definition}

\title{}
\author{}
\date{}

\newcommand{\pow}{q} 
\newcommand{\Pow}{{q_\star}} 
\newcommand{\ex}{\mathrm{ex}}
\renewcommand{\c}[1]{\mathcal{#1}}
\newcommand{\Om}{\Omega}

\newcommand{\sm}{\setminus}
\newcommand{\sub}{\subseteq}
\newcommand{\E}{\mathbb{E}}
\newcommand{\mon}{\mathrm{mon}}
\newcommand{\Mon}{\mathrm{Mon}}
\newcommand{\ep}{\epsilon}
\newcommand{\half}{\frac{1}{2}}

\title{Rational Exponents for General Graphs}

\author{Sean English\thanks{University of North Carolina Wilmington, \texttt{EnglishS@uncw.edu}.} \and Sam Spiro\thanks{Georgia State University, \texttt{sspiro@gsu.edu}.}}

\title{Helly Theorems for Generalized Tur\'an Problems}
\begin{document}
	\maketitle 
	
	\begin{abstract}
		Given a graph $T$ and a family of graphs $\mathcal{F}$, the generalized Tur\'an number $\mathrm{ex}(n,T,\mathcal{F})$ is the maximum number of copies of $T$ in an $n$-vertex $\mathcal{F}$-free graph.  We prove a general theorem which states that for any tree $T$, any family $\mathcal{F}$, and any integer $k$, either $\mathrm{ex}(n,T,\mathcal{F})$ is at least $\Omega(n^{k+1})$ or at most $O(\mathrm{ex}(n,\mathcal{F})^{k})$, from which we derive a number of consequences. Our proofs rely on  new variants of the classical Helly Theorem for trees which may be of independent interest. As far as we are aware, this is the first known application of Helly theorems for Tur\'an type problems.
	\end{abstract}
	
	\section{Introduction}
	\subsection{Generalized \texorpdfstring{Tur\'an}{Turan} Problems}
	
	One of the central areas of  extremal combinatorics is the study of Tur\'an numbers $\ex(n,\c{F})$ for families of graphs $\c{F}$, which is defined to be the maximum number of edges in an $n$-vertex $\c{F}$-free graph.  More generally, given a graph $H$ and a family of graphs $\c{F}$, we define the generalized Tur\'an number $\ex(n,H,\c{F})$ to be the maximum number of copies of $H$ in an $n$-vertex $\c{F}$-free graph.  Note that $\ex(n,K_2,\c{F})=\ex(n,\c{F})$, so this is a generalization of the classical Tur\'an number.
	
	The systematic study of generalized Tur\'an numbers was initiated by Alon and Shikhelman~\cite{AS2016}, and since then there has been a large number of results dedicated to the topic,
	see e.g. \cite{GGMV2020, GP2022, HP2021, MYZ2018, ZGHLSX2023}.  For a more thorough look at generalized Tur\'an numbers we refer the
	interested reader to the excellent recent survey by  Gerbner and Palmer~\cite{GP2025}.
	
	In this paper we focus on generalized Tur\'an numbers when $H$ is a tree.  Despite this being a natural extension of the $H=K_2$ case of the classical Tur\'an problem, there are surprisingly few results in this direction in the literature.  Perhaps the first result of this form is the following which appeared in the original paper of Alon and Shikehlman.
	
	\begin{theorem}[\cite{AS2016}]\label{Alon Shikelman trees}
		If $T,F$ are trees and if $\ex(n,T,F)>0$ for $n$ sufficiently large, then $\ex(n,T,F)=\Theta(n^k)$ for some integer $k$.
	\end{theorem}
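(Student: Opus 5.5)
The plan is to prove a dichotomy: for every integer $j\ge 0$, either $\ex(n,T,F)=\Omega(n^{j+1})$ or $\ex(n,T,F)=O(n^{j})$. Since $0<\ex(n,T,F)\le n^{|V(T)|}$, this dichotomy gives the theorem at once. Take $k$ to be the largest $j$ with $\ex(n,T,F)=\Omega(n^{j})$; such a $j\ge 0$ exists by the positivity hypothesis. Applying the dichotomy at $j=k$ rules out $\Omega(n^{k+1})$, so $\ex(n,T,F)=O(n^k)$, and hence $\ex(n,T,F)=\Theta(n^k)$.

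The basic input is that $F$ is a tree on $f$ vertices. Every $F$-free graph is therefore $(f-2)$-degenerate, since a subgraph of minimum degree at least $f-1$ contains $F$ greedily. Consequently $e(G)\le (f-2)n$ and $\ex(n,F)=\Theta(n)$. For the upper-bound side, fix an $F$-free graph $G$ and a degeneracy ordering, so that each vertex has at most $f-2$ neighbours later in the ordering. Given a copy of $T$ in $G$, orient each edge of $T$ towards its endpoint that comes later in the ordering. Then record only the set $S$ of vertices of $T$ that are \emph{sources} of this orientation relative to a suitable rooting, i.e.\ the vertices whose images are not determined, up to $O(1)$ choices, by their neighbours through back-edges.

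Once the images of $S$ are fixed, the rest of the copy can be recovered in $O(1)$ ways by walking along edges toward earlier vertices, using that each vertex has at most $f-2$ later neighbours. Summing over the boundedly many patterns of $S$ gives $\#T\le O(n^{s})$, where $s$ is the maximum size of a realizable source set $S$.

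For the lower-bound side, take a pattern with $|S|=s$ that actually occurs with $\Omega(n^{s})$ multiplicity in some $F$-free graphs; this is where a Helly-type selection argument is needed. From it I want to build an explicit graph: a bounded $F$-free core, in which each vertex of $S$ is replaced by $\Theta(n)$ twins attached only to the images of its $T$-neighbours. This produces $\Omega(n^{s})$ copies of $T$.

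The main obstacle is showing that this blow-up stays $F$-free. A copy of $F$ could use many twins simultaneously, for example a star or spider centered at a core vertex adjacent to many twins. The first thing I would try is to choose the source vertices and their attachments via a Helly-type argument on subtrees of $T$. If every copy of $F$ in the blow-up could be ``folded'' back, meaning identical twins collapsed and the result shown to already be a subtree embedded in a single original $F$-free graph, we would get a contradiction. Making that folding rigorous, and choosing the pattern so that $F$-copies cannot exploit large twin classes, is the crux of the proof.
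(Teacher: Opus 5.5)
Your reduction of the theorem to the dichotomy ``for each $j$, either $\Omega(n^{j+1})$ or $O(n^{j})$'' is correct. It is exactly how the paper recovers this statement: it is the special case of \Cref{tree containing forest gives integers}, i.e.\ \Cref{k-1 edges for trees} combined with $\ex(n,F)=O(n)$. But the dichotomy is the entire content, and your argument does not prove it. The source-walking step does bound the copies with a fixed orientation pattern by $O(n^{|S|})$. Summing over patterns, however, only gives $O(n^{s})$ with $s$ the largest source set that \emph{occurs}, and this in general exceeds the true exponent. For example, take $T=P_3$ and $F=K_{1,3}$. Every $F$-free graph has maximum degree $2$ and hence $O(n)$ copies. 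Yet in an even cycle whose vertices are ordered alternately, $n/2$ copies of $P_3$ have both endpoints as sources, so your bound is only $O(n^2)$. If ``realizable'' instead means ``occurs $\Omega(n^{|S|})$ times'', the lower bound becomes trivial, but the upper bound no longer follows: nothing controls patterns occurring $o(n^{|S|})$ but $\omega(n^{j})$ times. Either way, the implication ``not $\Omega(n^{j+1})\Rightarrow O(n^j)$'' is missing.

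The blow-up step you flag as the crux also fails in general. Replacing each $s\in S$ by $m$ twins joined to the images of $N_T(s)$ always creates the flower $T_R^m$ with $R=V(T)\setminus S$, in which every component of $T-R$ is a single vertex. Take $T=P_5=abcde$, and let $F$ be $P_3$ with two pendant leaves attached at each end. One checks that $T^4_{V(T)\setminus S}\supseteq F$ for every independent $S$ with $|S|\ge 2$, so no twin blow-up with $|S|\ge 2$ is $F$-free. Nevertheless $\ex(n,P_5,F)=\Theta(n^2)$. The spider $T^q_{\{c\}}$, whose components are $\{a,b\}$ and $\{d,e\}$, has only one vertex of degree at least $3$, so it is $F$-free and contains $\binom{2q}{2}$ copies of $P_5$; the matching upper bound comes from \Cref{k-1 edges for trees}.

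The missing idea is to split into cases according to the flowers $T_R^q$, whose components may be arbitrary subtrees, rather than trying to prove some blow-up $F$-free. If some $T_R^q$ with at least $j+1$ components of $T-R$ is $F$-free for every $q$, then $T_R^{\lfloor n/v(T)\rfloor}$ itself gives $\Omega(n^{j+1})$ (\Cref{proposition:keyObservation}). Otherwise some $q$ makes every $F$-free graph $\mathcal{F}_{T,j+1}^q$-free, and the real work is \Cref{theorem:treeBoundEG}. For each copy, consider the subtrees admitting at least $C$ re-embeddings with their neighbourhood fixed. These cannot include $j+1$ that are pairwise disjoint and non-adjacent, because \Cref{lemma:sunflower} would glue their re-embeddings into a member of $\mathcal{F}_{T,j+1}^q$ inside $G$. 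The Edge Helly theorem (\Cref{edge Helly}) then pierces all of them with $j$ edges. The images of these edges determine the copy up to $O(1)$ choices, giving $O(e(G)^j)=O(n^j)$.
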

	While \Cref{Alon Shikelman trees} is technically a result about generalized Tur\'an numbers of trees, it also ``misses the forest for the trees'' in the sense that Letzter~\cite{L2019} showed that \Cref{Alon Shikelman trees} continues to hold with $T$ an arbitrary graph.  Outside of \Cref{Alon Shikelman trees}, the only other results which consider generalized Tur\'an numbers for arbitrary trees $T$ is a paper by  Gerbner~\cite{G2023} who determined the order of magnitude of $\ex(n,T,K_{2,t})$ for all trees $T$, and an observation of Cambie, de Joannis de Verclos, and Kang~\cite{CJK23} which determines the exact value of $\ex(n,T,K_{1,t})$ for all $T$.  In terms of results for specific choices of trees $T$, the most substantial result we know of is a nice theorem of F\" uredi and K\" undgen~\cite{FK2006} which gives essentially optimal bounds on $\ex(n,K_{1,t},\c{F})$ based on the classical Tur\'an number $\ex(n,\c{F})$.  A slightly weakened version of this result can be stated as follows. 
	
	\begin{theorem}[\cite{FK2006}]\label{FurediKundgen}
		Let $\ell$ be an integer and $\c{F}$ a family of graphs which does not contain a subgraph of a star.  If $\ex(n,\c{F})=\Theta(n^{2-\beta})$ for some $\beta<\ell^{-1}$, then
		\[\ex(n,K_{1,\ell},\c{F})=\Theta(\ex(n,\c{F})^\ell n^{1-\ell}),\]
		and if $\beta\ge \ell^{-1}$ then
		\[\ex(n,K_{1,\ell},\c{F})=\Omega(n^\ell).\]
	\end{theorem}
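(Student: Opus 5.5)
We prove the two regimes separately; the upper bound in the first regime is the main difficulty. Throughout, write $e=\ex(n,\c{F})$. A copy of $K_{1,\ell}$ is a center $v$ together with an $\ell$-subset of $N(v)$. Hence an $n$-vertex graph $G$ contains $\sum_v \binom{d(v)}{\ell}$ such copies, and the problem becomes maximizing this degree sum subject to $G$ being $\c{F}$-free.

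\emph{Lower bounds.} For $\ex(n,K_{1,\ell},\c{F})=\Omega(e^\ell n^{1-\ell})$, take an extremal $\c{F}$-free graph $G$ with $e$ edges. Its average degree is $2e/n$. Because $\beta<1$, we have $2e/n\to\infty$, so convexity of $x\mapsto\binom{x}{\ell}$ (Jensen) gives
\[\sum_v\binom{d(v)}{\ell}\ge n\binom{2e/n}{\ell}=\Omega(e^\ell n^{1-\ell}).\]
For the bound $\Omega(n^\ell)$ in both regimes, use that $\c{F}$ contains no subgraph of a star. Then the star $K_{1,n-1}$ is itself $\c{F}$-free, since every subgraph of a star is a subgraph of a star. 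This star contains $\binom{n-1}{\ell}=\Omega(n^\ell)$ copies of $K_{1,\ell}$. This settles the case $\beta\ge\ell^{-1}$. When $\beta<\ell^{-1}$, we have $e^\ell n^{1-\ell}=\Theta(n^{2\ell-\ell\beta+1-\ell})=\Theta(n^{\ell+1-\ell\beta})$, which dominates $n^\ell$. So in that regime the Jensen bound is the relevant lower bound.

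\emph{Upper bound when $\beta<\ell^{-1}$.} Let $G$ be $\c{F}$-free on $n$ vertices. The obstacle is that a few very high-degree vertices could contribute as much as $n^\ell$ each. The plan is to bound the number of vertices of large degree using only the Tur\'an number.

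Fix a threshold $D$ and let $S$ be the set of vertices of degree at least $D$. For any set $A$ of $m$ vertices, the bipartite subgraph between $A$ and $V(G)\setminus A$ is $\c{F}$-free on at most $n+m$ vertices. Hence it has at most $\ex(n+m,\c{F})=O(e)$ edges, using the polynomial form $\Theta(n^{2-\beta})$. If $S$ is large, choose $A\subseteq S$ of size $m\approx e/D$ (or $2e/D$). The vertices of $A$ send $\sum_{a\in A}d(a)\ge mD$ edges into $G$, and at most $\binom m2$ of these edges lie inside $A$. So at least $mD-m^2$ edges go from $A$ to its complement, which contradicts the $O(e)$ bound once the constants are chosen appropriately.

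A cleaner route is dyadic. For each $j$, let $S_j$ be the set of vertices with degree in $[2^j,2^{j+1})$. The goal is to show $|S_j|=O(e/2^j)$ whenever $2^j\gg e/n$. One then sums
\[\sum_j |S_j|\,2^{(j+1)\ell}\ \lesssim\ \sum_{2^j\le n} e\,2^{j(\ell-1)}.\]
This sum is dominated by its top term, $e\,n^{\ell-1}=n^{\ell+1-\beta}$. That is too big, since it would show that stars dominate. So the degree-counting argument alone is insufficient, and the real work is ruling out many vertices of degree near $n$.

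This is where I expect the crux. Vertices of degree $\ge D$ with $D$ large force a dense bipartite piece. Specifically, if $|S_j|=s$, the bipartite graph $S_j\times V$ has $s2^j$ edges and is $\c{F}$-free on $s+n$ vertices. Hence $s2^j\le \ex(s+n,\c{F})=O(n^{2-\beta})$, and this argument cannot make $s$ smaller than $n^{2-\beta}/2^j$. I would therefore combine it with the monotonicity $\ex(n,\c{F})/n$-type bounds. The intended conclusion is that the total contribution $\sum_{d(v)\ge D}d(v)^\ell$ is at most $e\cdot n^{\ell-1}$ only for the top range, and the hypothesis $\beta<1/\ell$ is exactly what makes $e^\ell n^{1-\ell}\ge n^{\ell}$. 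The top range then contributes at most $O(n^\ell)$ per vertex times $O(1)$ vertices if the number of vertices of linear degree is bounded. I would try to prove that bound through a Kővári–Sós–Turán style count on common neighborhoods, which is the step I consider hardest and least certain.
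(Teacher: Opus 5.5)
The paper does not prove this statement. It is quoted from F\"uredi--K\"undgen, so your proposal has to stand on its own.

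\textbf{Lower bounds.} These are fine. One small fix: $x\mapsto\binom{x}{\ell}$ is convex only on $[\ell-1,\infty)$, so apply Jensen to its extension by $0$, or compare with $d(v)^\ell$ directly.

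\textbf{The gap.} The upper bound for $\beta<\ell^{-1}$, which is the heart of the theorem, is not proved. The only bound you actually obtain is $|\{v:d(v)\ge D\}|=O(e/D)$, and your own computation shows it is too weak. In fact it fails at every scale above the average degree. With it, the vertices of degree about $D$ contribute about $eD^{\ell-1}$, which exceeds the target $n^{\ell+1-\ell\beta}$ by a factor $(D/n^{1-\beta})^{\ell-1}$. So bounding only the vertices of linear degree cannot close the argument; the intermediate range $n^{1-\beta}\ll D\ll n$ is already fatal. A K\H{o}v\'ari--S\'os--Tur\'an count is also not available. The only information you have about $\mathcal{F}$ is the growth rate of $\ex(n,\mathcal{F})$. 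Erd\H{o}s--Stone would give $K_{t,t}$-freeness for some $t$, but with an exponent unrelated to $\beta$.

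\textbf{The missing idea.} Apply the Tur\'an bound to a subgraph on $O(s)$ vertices rather than $n+s$, exploiting that $N\mapsto N^{2-\beta}$ is superlinear. Assume $\beta>0$ (for $\beta=0$ the bound $n^{\ell+1}$ is trivial), and fix $C$ with $\ex(N,\mathcal{F})\le CN^{2-\beta}$ for all $N\ge 1$.
\begin{enumerate}[label=(\roman*)]
\item Let $S$ be the set of vertices of degree at least $D$, let $s=|S|$, and write $d_S(v)=|N(v)\cap S|$.
\item Since $\sum_v d_S(v)\ge sD$, the set $Y$ of the $s$ vertices with the largest $d_S(v)$ satisfies $\sum_{v\in Y}d_S(v)\ge s^2D/n$.
\item Each edge counted there lies in $G[S\cup Y]$ and is counted at most twice, so
\[
\frac{s^2D}{2n}\le e(G[S\cup Y])\le \ex(2s,\mathcal{F})\le C(2s)^{2-\beta},
\qquad\text{hence}\qquad s\le\Bigl(\frac{8Cn}{D}\Bigr)^{1/\beta}.
\]
\end{enumerate}

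Now set $D_0=n^{1-\beta}$. Vertices of degree below $D_0$ contribute at most $nD_0^\ell=n^{\ell+1-\ell\beta}$ to $\sum_v d(v)^\ell$. Each dyadic range $[2^j,2^{j+1})$ with $2^{j+1}>D_0$ contributes $O\bigl(n^{1/\beta}2^{j(\ell-1/\beta)}\bigr)$.

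This is where $\beta<\ell^{-1}$ really enters, not merely in making $e^\ell n^{1-\ell}\ge n^\ell$. It gives $\ell-1/\beta<0$, so the dyadic sum is geometric and dominated by $2^j\approx D_0$. The total is therefore $O(n^{1/\beta}D_0^{\ell-1/\beta})=O(n^{\ell+1-\ell\beta})=O(\ex(n,\mathcal{F})^\ell n^{1-\ell})$.

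As a sanity check, when $\beta>\ell^{-1}$ the same sum is dominated by $D\approx n$. There $O(1)$ vertices contribute $n^\ell$, consistent with the second regime.
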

	
	The main result of this paper is a general bound on $\ex(n,T,\c{F})$ for trees $T$ which holds for any choice of $T$ and any family $\c{F}$, thereby greatly expanding on the limited set of previously known results listed above.  
	
	Our result is spiritually similar to that of \Cref{FurediKundgen} in the sense that it either bounds $\ex(n,T,\c{F})$ as a function of $\ex(n,\c{F})$ or lower bounds it by some function independent of $\ex(n,\c{F})$.  Specifically, our theorem implies that for any tree $T$, family $\c{F}$, and integer $k$, either $\ex(n,T,\c{F})=\Omega(n^{k+1})$ or $\ex(n,T,\c{F})=O(\ex(n,\c{F})^{k})$.  Our theorem moreover gives a criterion for determining whether the bound $\Omega(n^{k+1})$ or $O(\ex(n,\c{F})^{k})$ holds for a given family $\c{F}$, and to state this condition we need a technical definition.
	
	\begin{definition}\label{definition flower powers}
		Given a graph $H$, a set of vertices $R\sub V(H)$, and an integer $\pow$, we define the graph $H_R^\pow$ to be the graph obtained by taking the union of $\pow$ copies of $H$ such that each of the copies agree on the set of vertices $R$ and are otherwise disjoint.  For each integer $k\ge 1$ we define
		\[
		\c{F}_{H,k}^\pow:=\{H_R^\pow: R\subsetneq V(H),\ H-R\textrm{ has at least }k\textrm{ connected components}\}.
		\]
	\end{definition}
	
	See Figure~\ref{figure flower power definition full figure} for an example of a graph of the form $H_R^\pow$.  With this our main theorem can be stated as follows.
	
	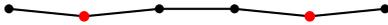
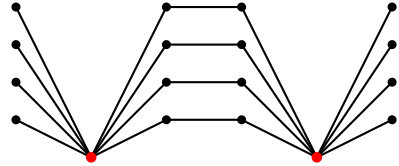
\begin{figure}
		\begin{center}
			\begin{subfigure}[t]{0.4\textwidth}
				\begin{center}\begin{tikzpicture}
						\draw[thick] (0,0.1)--(1,0)--(2,0.1)--(3,0.1)--(4,0)--(5,0.1);
						
						\node (0) at (0,0.1) {};
						\node (1) at (1,0) {};
						\node (2) at (2,0.1) {};
						\node (3) at (3,0.1) {};
						\node (4) at (4,0) {};
						\node (5) at (5,0.1) {};
						
						\fill (0) circle (0.06) node [above] {};
						\fill[color=red] (1) circle (0.07) node [above] {};
						\fill (2) circle (0.06) node [above] {};
						\fill (3) circle (0.06) node [above] {};
						\fill[color=red] (4) circle (0.07) node [above] {};
						\fill (5) circle (0.06) node [above] {};
					\end{tikzpicture}
				\end{center}
				\caption{$H=P_6$. The vertices in $R$ are red.}\label{figure flower power definition path}
			\end{subfigure}\hspace{0.1\textwidth}\begin{subfigure}[t]{0.4\textwidth}
				\begin{center}
					\begin{tikzpicture}
						\draw[thick] (0,0.5)--(1,0)--(2,0.5)--(3,0.5)--(4,0)--(5,0.5);
						\draw[thick] (0,1)--(1,0)--(2,1)--(3,1)--(4,0)--(5,1);
						\draw[thick] (0,1.5)--(1,0)--(2,1.5)--(3,1.5)--(4,0)--(5,1.5);
						\draw[thick] (0,2)--(1,0)--(2,2)--(3,2)--(4,0)--(5,2);
						
						\node (0) at (0,0.5) {};
						\node (1) at (1,0) {};
						\node (2) at (2,0.5) {};
						\node (3) at (3,0.5) {};
						\node (4) at (4,0) {};
						\node (5) at (5,0.5) {};
						
						\node (6) at (0,1.5) {};
						\node (7) at (2,1.5) {};
						\node (8) at (3,1.5) {};
						\node (9) at (5,1.5) {};
						
						\node (10) at (0,1) {};
						\node (11) at (2,1) {};
						\node (12) at (3,1) {};
						\node (13) at (5,1) {};
						
						\node (14) at (0,2) {};
						\node (15) at (2,2) {};
						\node (16) at (3,2) {};
						\node (17) at (5,2) {};
						
						\fill (0) circle (0.06) node [above] {};
						\fill[color=red] (1) circle (0.07) node [above] {};
						\fill (2) circle (0.06) node [above] {};
						\fill (3) circle (0.06) node [above] {};
						\fill[color=red] (4) circle (0.07) node [above] {};
						\fill (5) circle (0.06) node [above] {};
						\fill (6) circle (0.06) node [above] {};
						\fill (7) circle (0.06) node [above] {};
						\fill (8) circle (0.06) node [above] {};
						\fill (9) circle (0.06) node [above] {};
						\fill (10) circle (0.06) node [above] {};
						\fill (11) circle (0.06) node [above] {};
						\fill (12) circle (0.06) node [above] {};
						\fill (13) circle (0.06) node [above] {};
						\fill (14) circle (0.06) node [above] {};
						\fill (15) circle (0.06) node [above] {};
						\fill (16) circle (0.06) node [above] {};
						\fill (17) circle (0.06) node [above] {};
					\end{tikzpicture}
				\end{center}
				\caption{The graph $H_R^4$.}\label{figure flower power definition H_R^4}
			\end{subfigure}
		\end{center}
		\caption{An example of the graph $H_R^4$ from \Cref{definition flower powers} when $H$ is the path on $6$ vertices and $R$ consists of the set of red vertices in Figure~\ref{figure flower power definition path}. $H_R^4$ is a member of the family $\mathcal{F}_{H,3}^4$ since $H-R$ has (at least) three connected components.}\label{figure flower power definition full figure}
	\end{figure}

	\begin{theorem}\label{k-1 edges for trees}
		Let $T$ be a tree, $\c{F}$ a family of graphs, and $k\ge 0$ an integer.  If there exists an integer $q$ such that every $\c{F}$-free graph is $\c{F}_{T,k+1}^q$-free, then
		\[
		\ex(n,T,\c{F})=O_{T,k,q}(\ex(n,\c{F})^{k}),
		\]
		and if no such $q$ exists then
		\[
		\ex(n,T,\c{F})=\Om_{T,k}(n^{k+1}).
		\]
	\end{theorem}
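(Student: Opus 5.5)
The plan has two parts: a direct construction for the lower bound, and for the upper bound a shortening argument on copies of $T$ plus a Helly-type lemma for subtrees of $T$.

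\textbf{Lower bound.} Suppose that for every $q$ some $\c{F}$-free graph contains a member of $\c{F}_{T,k+1}^q$. Since $T$ has finitely many vertex subsets, pigeonholing over $q$ gives a single $R\subsetneq V(T)$ with $T-R$ having at least $k+1$ components, such that $T_R^q$ is $\c{F}$-free for every $q$. (Being $\c{F}$-free is inherited by subgraphs, and $T_R^q\sub T_R^{q'}$ for $q\le q'$, so "for infinitely many $q$" upgrades to "for all $q$".) Pick $k+1$ components $C_1,\dots,C_{k+1}$ of $T-R$, and take $q\approx n/|V(T)|$ copies of $T$ glued along $R$. Now build a copy of $T$ as follows: use $R$ and the components outside $C_1,\dots,C_{k+1}$ from one fixed copy, and choose $C_i$ independently from any of the $q$ copies. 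This is a genuine copy of $T$, because each $C_i$ attaches to the rest only through $R$, and $R$ is shared by all copies. It yields $q^{k+1}=\Omega(n^{k+1})$ distinct copies of $T$, giving the second claim. For $n$ not divisible by $|V(T)|$, pad with isolated vertices.

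\textbf{Upper bound.} Let $G$ be $\c{F}$-free, hence $\c{F}_{T,k+1}^q$-free, with $e(G)\le \ex(n,\c{F})$. The goal is to encode each copy of $T$ by at most $k$ edges of $G$ plus $O(1)$ additional information. Fix the embedding $\phi\colon T\to G$ and consider, for each edge set $E'\sub E(T)$ of size $k$, the number of extensions of $\phi|_{E'}$. The key dichotomy is the following. Call a copy \emph{light} if some set of at most $k$ edges of $T$ (together with bounded data) determines it up to $O_{T,k,q}(1)$ choices; light copies number $O(e(G)^k)$. For a \emph{heavy} copy, every choice of $\le k$ edges has $\ge C$ extensions for a large constant $C$. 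From these many extensions we would like to extract $q$ copies of $T$ agreeing on some $R$ and otherwise disjoint, with $T-R$ having $\ge k+1$ components, which is a forbidden $T_R^q$. Ramsey/sunflower-type arguments (the Erd\H{o}s–Rado sunflower lemma applied to vertex sets of extensions) produce many extensions forming a sunflower with some kernel $R$. The issue is guaranteeing that $T-R$ has at least $k+1$ components, i.e.\ that the petals split into $\ge k+1$ pieces.

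\textbf{Main obstacle: a Helly theorem for subtrees.} This is where I expect a Helly-type statement to be needed. For each edge $e$ of $T$, the set of vertices of $T$ where the extensions are "forced" forms a subtree. I would try to prove a lemma of the form: \emph{if a family of subtrees of $T$ has the property that every $k+1$ of them (or every $k$ edges' worth) have a common "free" part, then the kernel $R$ leaves at least $k+1$ components}. Equivalently, if $T-R$ had at most $k$ components for every sunflower kernel arising, then one could pick one edge in each component together with edges incident to $R$, giving $\le k$ edges that pin down the copy up to $O(1)$ choices. This would contradict heaviness, since the vertices of $R$ are fixed and each component is then determined up to boundedly many options by induction on $|V(T)|$. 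The classical Helly property for subtrees (pairwise-intersecting subtrees share a vertex) would be generalized to: a family of subtrees, every $k+1$ of which fail to cover, admits a set of $k$ vertices/edges hitting all of them. I would run an induction on $|V(T)|$ and $k$, iteratively fixing edges greedily and counting with a weighted sum over "roots". The remaining routine step is to sum $O(e(G)^k)$ over the finitely many edge-set choices, giving $\ex(n,T,\c{F})=O(\ex(n,\c{F})^k)$.
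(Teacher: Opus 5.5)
Your lower bound is correct and is essentially the paper's Proposition~3.1. Your light/heavy split is also a sound framework, and light copies do number $O(e(G)^k)$.

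The gap is the heavy case, which carries the whole upper bound and is never actually argued. Your ``equivalently'' step is not an equivalence: if every sunflower kernel $R$ leaves at most $k$ components of $T-R$, it does not follow that at most $k$ edges pin the copy down. One edge per component plus edges at $R$ is generally more than $k$ edges, a single sunflower says nothing about the remaining extensions, and the induction on $v(T)$ is never set up. More fundamentally, a sunflower taken among extensions of $\phi|_{E'}$ cannot in general produce the forbidden graph. Take $T=P_5$ on $\hat{x}_1,\dots,\hat{x}_5$, $k=2$, and $G=K_{2,m}$ with $m$ large. Every copy is heavy, and $G$ contains $(P_5)^q_{\{\hat{x}_2,\hat{x}_4\}}$, the unique member of $\c{F}^q_{P_5,3}$. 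Yet any kernel $R\supseteq V(E')$ with $|E'|=2$ has $|R|\ge 3$, so $P_5-R$ has at most two components. For $E'=\emptyset$, nothing in your argument stops the sunflower lemma from returning, say, the kernel $\{\hat{x}_1,\hat{x}_2,\hat{x}_4\}$. The Helly statement you sketch (``every $k+1$ of which fail to cover'') is not the right one. The system you would apply it to, sets of ``forced'' vertices, is also wrong, and such sets need not even be connected.

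The missing idea is to locate $k+1$ separated subtrees via Helly and then apply the sunflower lemma locally inside each of them. Work inside a distinguishing (partite) family $\Phi$, which you need for injectivity when gluing. For $\phi\in\Phi$, let $\c{T}_\phi$ be the set of subtrees $T'$ admitting at least $C$ distinct restrictions $\phi'|_{V(T')}$ with $\phi'\in\Phi$ and $\phi'(N(T'))=\phi(N(T'))$.
\begin{itemize}
\item If some set $E'$ of at most $k$ edges pierces $\c{T}_\phi$, then $\phi$ is light. Each component of $T-V(E')$ has its neighbourhood inside $V(E')$ and lies outside $\c{T}_\phi$, so fixing $\phi$ on $V(E')$ leaves fewer than $C^{v(T)}$ extensions.
\item If no such $E'$ exists, use the Edge Helly Theorem: if every subfamily of at most $k+1$ members can be pierced by at most $k$ edges, so can the whole system. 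This yields $T'_1,\dots,T'_{k+1}\in\c{T}_\phi$ that are pairwise disjoint and non-adjacent. Otherwise one edge would pierce two of them and one edge each would pierce the rest.
\item Apply the sunflower lemma to the re-embeddings of each $T'_j$, giving $q$ of them that agree exactly on some $R_j\subsetneq V(T'_j)$. Glue the $p$th re-embedding of each $T'_j$ to $\phi$ on $V_0:=V(T)\setminus\bigcup_j V(T'_j)$. The glued maps are monomorphisms, since edges leaving $T'_j$ land in $N(T'_j)\sub V_0$, where everything agrees with $\phi$ by the distinguishing property.
\item The union of the glued copies is $T_R^q$ with $R=V_0\cup\bigcup_j R_j$. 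Then $T-R$ has a component inside each $T'_j$, hence at least $k+1$ components, which is the contradiction.
\end{itemize}
In the $K_{2,m}$ example, these subtrees are the singletons $\{\hat{x}_1\},\{\hat{x}_3\},\{\hat{x}_5\}$, re-embedded independently.
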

	
	One can derive a number of powerful results using \Cref{k-1 edges for trees} together with some very short proofs.  For example, one can prove the general statement that families $\c{F}$ with $\ex(n,\c{F})$ small have $\ex(n,T,\c{F})$ close to an integer whenever $T$ is a tree.
	
	\begin{theorem}\label{tree generalized exponents close to integers}
		Let $T$ be a tree and $\c{F}$ a family with $\ex(n,\c{F})=O(n^{1+\ep})$.  If $\ex(n,T,\c{F})>0$ for all sufficiently large $n$, then there exists an integer $k$ such that
		\[
		\ex(n,T,\c{F})=\Omega(n^k),
		\] 
		and
		\[
		\ex(n,T,\c{F})=O(n^{k+k\ep}).
		\]
	\end{theorem}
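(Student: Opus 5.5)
Apply \Cref{k-1 edges for trees} with $k$ chosen as the largest integer for which the $\Omega(n^{k})$ lower bound holds. For each integer $j\ge 0$ write $P(j)$ for the statement ``there exists $q$ such that every $\c{F}$-free graph is $\c{F}_{T,j+1}^q$-free.'' By \Cref{k-1 edges for trees}, if $P(j)$ fails then $\ex(n,T,\c{F})=\Omega(n^{j+1})$. If $P(j)$ holds then
\[\ex(n,T,\c{F})=O(\ex(n,\c{F})^j)=O(n^{j+j\ep}).\]

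\textbf{Step 1: $P(v(T)-1)$ holds.} If $R\subsetneq V(T)$, then $T-R$ has at most $v(T)-|R|\le v(T)$ components. Hence for $j+1> v(T)$ the family $\c{F}_{T,j+1}^q$ is empty, and $P(j)$ holds trivially with any $q$. So the set of $j$ with $P(j)$ true is nonempty. Let $j_0$ be the least such $j$.

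\textbf{Step 2: $j_0\ge 1$.} Suppose $j_0=0$. Then \Cref{k-1 edges for trees} gives $\ex(n,T,\c{F})=O(\ex(n,\c{F})^0)=O(1)$. This does not contradict the hypothesis $\ex(n,T,\c{F})>0$ directly, so handle this case on its own: set $k=0$. Then $\ex(n,T,\c{F})=\Omega(n^0)$ because it is a positive integer for large $n$, and $\ex(n,T,\c{F})=O(1)=O(n^{0})$. Both required bounds hold, so we may assume $j_0\ge 1$.

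\textbf{Step 3: choose $k=j_0$.} Since $P(j_0-1)$ fails, \Cref{k-1 edges for trees} applied with $j_0-1$ gives $\ex(n,T,\c{F})=\Omega(n^{j_0})$. Since $P(j_0)$ holds, it gives $\ex(n,T,\c{F})=O(\ex(n,\c{F})^{j_0})=O(n^{j_0+j_0\ep})$. Taking $k=j_0$ yields both bounds.

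There is no serious obstacle; the argument only needs the threshold $k$ to exist. Step 1 supplies this from the triviality of $\c{F}_{T,j}^q$ for large $j$, and Step 2 settles the degenerate case $k=0$ using the positivity hypothesis. The dependence of the implied constants on $T$ and $\c{F}$ is harmless because the threshold $k$ is fixed once $T$ and $\c{F}$ are given.
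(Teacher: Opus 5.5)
Your proof is correct and is essentially the paper's argument: the paper takes $k$ to be the largest integer with $\ex(n,T,\c{F})=\Omega(n^k)$ and applies the dichotomy of \Cref{k-1 edges for trees} at that $k$, while you locate the same threshold from the other side as the least $j$ for which the $q$-condition holds, which works equally well. (Minor slip: your Step 1 justification shows the condition holds for $j\ge v(T)$ rather than for $j=v(T)-1$, and the latter in fact fails for $T=K_1$, but you only need some such $j$ to exist.)
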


	In particular, the case $\ep=0$ gives the following.
	\begin{corollary}\label{tree containing forest gives integers}
		For every tree $T$ and family $\c{F}$ containing a forest, if $\ex(n,T,\c{F})>0$ for all sufficiently large $n$, then there exists an integer $k$ such that $\ex(n,T,\c{F})=\Theta(n^k)$.
	\end{corollary}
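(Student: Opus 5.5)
The corollary follows from \Cref{tree generalized exponents close to integers} with $\ep=0$, once we check that a family containing a forest satisfies $\ex(n,\c{F})=O(n)$. Let $F_0\in\c{F}$ be a forest with $t$ vertices. Embed $F_0$ in a tree $F_0'$ on $t$ vertices, for instance by joining its components with extra edges. By the classical greedy argument, every graph with average degree at least $2t$ contains a subgraph of minimum degree at least $t$. Every tree on $t$ vertices embeds greedily into such a subgraph, so any $n$-vertex graph with more than $tn$ edges contains $F_0'$, hence $F_0$. Since every $\c{F}$-free graph is $F_0$-free, $\ex(n,\c{F})\le \ex(n,F_0)\le tn=O(n)$. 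Thus the hypothesis $\ex(n,\c{F})=O(n^{1+\ep})$ of \Cref{tree generalized exponents close to integers} holds with $\ep=0$.

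Now assume $\ex(n,T,\c{F})>0$ for all sufficiently large $n$. \Cref{tree generalized exponents close to integers} gives an integer $k$ with $\ex(n,T,\c{F})=\Omega(n^k)$ and $\ex(n,T,\c{F})=O(n^{k+k\cdot 0})=O(n^k)$. Together these give $\ex(n,T,\c{F})=\Theta(n^k)$, as required.

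All the work sits in \Cref{tree generalized exponents close to integers}, which we may assume, so there is no real obstacle. The only point needing care is that the linear bound holds for forests and not just trees. The embedding into a $t$-vertex tree handles this, and the constant $t$ depends only on $\c{F}$.
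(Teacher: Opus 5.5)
Your proposal is correct and is the paper's argument: apply \Cref{tree generalized exponents close to integers} with $\ep=0$, using that $\ex(n,\c{F})=O(n)$ whenever $\c{F}$ contains a forest. The only difference is that the paper cites this linear bound from the literature, while you prove it directly with the standard minimum-degree and greedy-embedding argument.
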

	
	This gives a two-way generalization of \Cref{Alon Shikelman trees}, in the sense that we can forbid forests and not just trees, and we can also forbid an entire family of graphs rather than just a single graph.  We emphasize that this result can be derived from the original method of \Cref{Alon Shikelman trees}; we highlight its particular statement here both to show the power of what can be proven using \Cref{k-1 edges for trees}, and also because the exact statement of \Cref{tree containing forest gives integers} can be used to prove a sort of ``stability'' result for generalized Tur\'an problems for trees.
	\begin{corollary}\label{leaf stability corollary}
		If $T\ne K_2$ is a tree with $\ell\ge 2$ leaves and if $\c{F}$ is a family of graphs with $\ex(n,T,\c{F})=O(n^\ell)$ and $\ex(n,T,\c{F})>0$ for all sufficiently large $n$, then $\ex(n,T,\c{F})=\Theta(n^k)$ for some integer $k$.
	\end{corollary}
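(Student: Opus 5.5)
The plan is to split according to whether $\c{F}$ contains a forest. If $\c{F}$ contains a forest, then since $\ex(n,T,\c{F})>0$ for all sufficiently large $n$, \Cref{tree containing forest gives integers} applies directly and gives $\ex(n,T,\c{F})=\Theta(n^k)$ for some integer $k$. So the remaining work is the case where $\c{F}$ contains no forest. In that case every forest is $\c{F}$-free. It then suffices to exhibit an $n$-vertex forest with $\Omega(n^\ell)$ copies of $T$. Combined with the hypothesis $\ex(n,T,\c{F})=O(n^\ell)$, this gives $\ex(n,T,\c{F})=\Theta(n^\ell)$, so we may take $k=\ell$.

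For the construction, let $L$ be the set of $\ell$ leaves of $T$. Since $T\neq K_2$ is a tree with at least two leaves, it has at least three vertices. Hence no two leaves are adjacent, and every leaf has a unique neighbor, which is not a leaf. Let $m=\lfloor (n-|V(T)|)/\ell\rfloor$. Form a graph $G$ as follows: start from $T$, replace each leaf $x\in L$ by $m$ clones, each adjacent only to the parent of $x$, and then add isolated vertices so that $G$ has exactly $n$ vertices. The graph $G$ is a tree plus isolated vertices, hence a forest.

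Next I count copies of $T$ in $G$. Choose one clone of each leaf $x\in L$. Even when several leaves share a parent, their clone classes are disjoint, so the chosen clones are distinct vertices. Together with the non-leaf vertices of $T$, they span a copy of $T$. Different choices give different vertex sets and hence different subgraphs. This yields at least $m^\ell=\Omega_T(n^\ell)$ copies of $T$, as required.

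There is no real obstacle here. The only points to check are that $T\ne K_2$ is used to ensure that cloning the leaves keeps $G$ a forest, and that the padding with isolated vertices makes the bound hold for every large $n$. (Cloning both ends of $K_2$ would produce $K_{m,m}$, which is not a forest, and this is why $K_2$ is excluded.)
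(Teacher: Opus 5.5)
Your proof is correct and is the paper's argument: the forest case goes to \Cref{tree containing forest gives integers}, and otherwise the tree obtained from $T$ by duplicating each leaf is an $\mathcal{F}$-free forest with $\Omega(n^\ell)$ copies of $T$, which together with the $O(n^\ell)$ hypothesis gives $\Theta(n^\ell)$. Your extra details (the exact choice of $m$, padding with isolated vertices, and why $T\neq K_2$ keeps the construction a forest) fill in points the paper leaves implicit.
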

	In particular, if $\ex(n,T,\c{F})=o(n^k)$ for any integer $k\le \ell$, then the generalized Tur\'an number must in fact drop all the way down to $O(n^{k-1})$.  This sort of stability result is strongest when $T$ has many leaves. It is well known that trees with few leaves have large diameter, and for such trees the following (more involved) consequence of \Cref{k-1 edges for trees} gives effective stability.
	
	\begin{theorem}\label{diameter stability}
		Let $k\ge 1$ be an integer. If $T$ is a tree of diameter $d\ge 2k$, then for every family of graphs $\c{F}$ with $\ex(n,T,\c{F})=o(n^{k+1})$, we have
		\[
		\ex(n,T,\c{F})=O(n^{k+\frac{k(k-1)}{d-k}}).
		\]
	\end{theorem}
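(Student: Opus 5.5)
We would derive this from \Cref{k-1 edges for trees}, in its first alternative. Since $\ex(n,T,\c{F})=o(n^{k+1})$, the second alternative of \Cref{k-1 edges for trees} fails. Hence there is an integer $q$ such that every $\c{F}$-free graph is $\c{F}_{T,k+1}^q$-free, and therefore $\ex(n,T,\c{F})=O(\ex(n,\c{F})^k)$. The claimed bound then reduces to showing
\[
\ex(n,\c{F})=O\bigl(n^{1+\frac{k-1}{d-k}}\bigr),
\]
since raising this to the $k$-th power gives exactly $n^{k+\frac{k(k-1)}{d-k}}$. Every $\c{F}$-free graph is $T_R^q$-free for every admissible $R$, so $\ex(n,\c{F})\le \ex(n,T_R^q)$. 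It therefore suffices to find one good set $R$ with $T-R$ having at least $k+1$ components and $\ex(n,T_R^q)=O(n^{1+\frac{k-1}{d-k}})$.

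\textbf{Choice of $R$.} Let $v_0v_1\cdots v_d$ be a longest path in $T$. The case $k=1$ is trivial (take $R=\{v_1\}$, so the exponent is $1$ and $\ex(n,T_R^q)=O(n)$ because $T_R^q$ is a forest), so assume $k\ge 2$. Take $R=\{v_{i_1},\dots,v_{i_k}\}$ with $i_1=1$, $i_k=d-1$, and the intermediate indices spread as evenly as possible. Then every gap satisfies
\[
i_{j+1}-i_j\ge \Bigl\lfloor \tfrac{d-2}{k-1}\Bigr\rfloor \ge \tfrac{d-k}{k-1}=:\ell .
\]
Since $d\ge 2k$, every gap has length at least $2$, so each segment strictly between consecutive $R$-vertices is nonempty. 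Together with $v_0$ and $v_d$, this shows $T-R$ has at least $k+1$ components, as required.

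\textbf{Structure of $T_R^q$.} In $T_R^q$, each pair $v_{i_j},v_{i_{j+1}}$ is joined by $q$ internally disjoint paths of length at least $\ell$. Everything else is trees hanging off these paths or off $R$. In particular, $T_R^q$ is a subgraph of a ``chain of thetas'' $\theta(m_1,q)\cup\dots\cup\theta(m_{k-1},q)$, glued consecutively at the vertices of $R$, with bounded-size trees attached, where every $m_j\ge \ell$. Because $\ell\ge \frac{d-k}{k-1}$ gives $\frac1\ell\le \frac{k-1}{d-k}$, it suffices to prove
\[
\ex(n,T_R^q)=O(n^{1+1/\ell}).
\]

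\textbf{Embedding, and the main obstacle.} This last bound is where the work lies. I would proceed as follows.
\begin{enumerate}
\item Start from a graph with $Cn^{1+1/\ell}$ edges and pass, by the standard Erd\H{o}s--Simonovits regularization, to a $K$-almost-regular subgraph of minimum degree $D\ge C'n^{1/\ell}$.
\item Prove a rooted Faudree--Simonovits type lemma: for every $m\ge \ell$, every vertex $x$ outside a small exceptional set has a partner $y$ joined to $x$ by many (more than $q|V(T)|$) internally disjoint paths of length exactly $m$. The idea is that the breadth-first layers around $x$ cannot keep expanding by a factor $D$ for $m\ge \ell$ steps without exceeding $n$, which forces collisions at distance $m$.
\item Chain these structures: from $v_{i_1}$ find $v_{i_2}$, then from $v_{i_2}$ find $v_{i_3}$, and so on, while avoiding the bounded number of vertices already used.
\item Attach the hanging trees greedily, which is possible since $D$ is large.
\end{enumerate}
The delicate point is making step~2 \emph{rooted}, so that the endpoint found in one theta can serve as the starting root of the next, and ensuring the collision argument yields \emph{disjoint} paths of the \emph{exact} prescribed length. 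I would first try to obtain this by iterating the Faudree--Simonovits counting on a bipartite almost-regular host graph and deleting few bad vertices at each step.
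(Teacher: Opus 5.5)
\textbf{Genuine gap.} Everything up to the reduction matches the paper. You use the main theorem to get $q$ and $\ex(n,T,\c{F})=O(\ex(n,\c{F})^k)$. You bound $\ex(n,\c{F})$ by $\ex(n,T_R^q)$ for a single admissible $R$ placed along a longest path at spacing roughly $(d-2)/(k-1)$. Your component count and your $k=1$ case are correct.

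The gap is the step you flag yourself: you never prove $\ex(n,T_R^q)=O(n^{1+1/b})$, where $b=\lfloor (d-2)/(k-1)\rfloor$, and the sketched embedding would not deliver it as stated. Step~2 is not a routine consequence of breadth-first layers failing to expand. Producing many internally disjoint paths of one exact length between a single pair of vertices is already the full content of the Faudree--Simonovits theta-graph theorem. You are asking for a much stronger rooted version, in which almost every vertex is an endpoint of such a theta. Even granting it, Step~3 needs the partner $y$ of $x$ to lie outside the exceptional set so that it can root the next theta, and nothing in your sketch ensures this. Your suggested fix, deleting bad vertices at each step, is not shown to preserve the degree conditions. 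In effect you are trying to reprove a gluing theorem for theta graphs from scratch, and that is the genuinely hard part, not a detail.

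The missing idea is to reduce to known extremal results:
\begin{enumerate}
\item Space the roots exactly $b$ apart, $R=\{v_{1+ib}:0\le i\le k-1\}$. This is valid since $(k-1)b\le d-2$ and $b\ge 2$, and pruning pendant trees then leaves $c_2(T_R^q)=\theta_{q,b,k-1}$.
\item The folklore fact $\ex(n,F)=\Theta(\ex(n,c_2(F)))$ for $F$ containing a cycle (\Cref{lemma 2core}) disposes of the hanging trees. This replaces your Step~4.
\item For the chain of thetas, start from the classical bound $\ex(n,\theta_{q,b})=O_q(n^{1+1/b})$. Then apply the Dong--Gao--Liu theorem (\Cref{gluing}): gluing two copies of a connected bipartite graph at corresponding vertices of the same part changes the extremal number by only a constant factor.
\item Gluing two copies of $\theta_{q,b,2^{r-1}}$ at an end vertex gives $\theta_{q,b,2^r}$. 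With monotonicity in $c$, this yields $\ex(n,\theta_{q,b,c})=O(n^{1+1/b})$, and $1/b\le (k-1)/(d-k)$ finishes.
\end{enumerate}
Your ``as evenly as possible'' spacing, with gaps $b$ and $b+1$, does not fit this framework, because the gluing theorem joins two copies of the \emph{same} graph. Equal spacing, with the slack absorbed into the final segment beyond the last root, avoids the issue.
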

	
	As we discuss below, this upper bound is tight for paths $P_{d+1}$ with $d\equiv 1 \mod k-1$.  Combining the argument used to prove \Cref{diameter stability} together with \Cref{leaf stability corollary} allows us to show that all ``small'' generalized Tur\'an numbers for trees must be very close to an integer power of $n$.

	\begin{theorem}\label{diameter corollary}
		For every tree $T$ and $0<\ep\le 1$, if $\c{F}$ is a family of graphs such that \[\ex(n,T,\c{F})=O(n^{(\ep v(T))^{1/3}}),\]  and such that $\ex(n,T,\c{F})>0$ for all sufficiently large $n$; then there exists some integer $k$ such that \[\ex(n,T,\c{F})=\Om(n^k),\]
		and
		\[\ex(n,T,\c{F})=O(n^{k+\ep}).\]
	\end{theorem}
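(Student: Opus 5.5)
The plan is to split according to the number of leaves of $T$: when $T$ has many leaves we use \Cref{leaf stability corollary}, and when it has few leaves its diameter is large and we use the argument behind \Cref{diameter stability}. Throughout, write $v=v(T)$, let $\ell$ be the number of leaves of $T$ and $d$ its diameter, and set $s=(\ep v)^{1/3}\le v^{1/3}$. By hypothesis $\ex(n,T,\c{F})=O(n^{s})$.

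\textbf{Many leaves.} Suppose first that $\ell\ge s$ and $T\neq K_2$. Then $\ex(n,T,\c{F})=O(n^{s})\subseteq O(n^\ell)$. Together with $\ex(n,T,\c{F})>0$ for large $n$, \Cref{leaf stability corollary} gives $\ex(n,T,\c{F})=\Theta(n^k)$ for some integer $k$, which is stronger than what we need.

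\textbf{Small trees.} The case $T=K_2$, and more generally trees with $v$ bounded by a small absolute constant, should be handled directly. For these we have $s\le v^{1/3}$, so the hypothesis already forces the exponent into a short window. Let $k$ be the largest integer with $\ex(n,T,\c{F})=\Omega(n^k)$; this exists and satisfies $k\ge 0$ since $\ex(n,T,\c{F})>0$. We then check that the remaining gap is at most $\ep$, or fall back on the general argument below, whose inequalities hold once $v$ is moderately large.

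\textbf{Few leaves.} Now suppose $\ell<s$, and again let $k$ be the largest integer with $\ex(n,T,\c{F})=\Omega(n^k)$. Since $\ex(n,T,\c{F})=O(n^s)$, we have $k\le s$.

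\begin{itemize}
\item \emph{Lower bound on the diameter.} Every tree is the union of the paths from a central vertex to its leaves, and each such path has length at most $\lceil d/2\rceil$. Hence $v\le 1+\ell\lceil d/2\rceil$, which gives $d\ge 2(v-1)/\ell-1> 2(v-1)/s-1$.

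\item \emph{Applying \Cref{diameter stability}.} We need $d\ge 2k$ and
\[
\frac{k(k-1)}{d-k}\le \ep, \quad\text{equivalently}\quad d\ge k+\frac{k(k-1)}{\ep}.
\]
Since $k\le s$ and $s^2/\ep=v/s$, it suffices to have
\[
\frac{2(v-1)}{s}-1\ge s+\frac{v}{s},
\qquad\text{i.e.}\qquad
v-2\ge s^2+s.
\]
This holds because $s\le v^{1/3}$, once $v$ exceeds a small constant. The condition $d\ge 2k$ follows similarly. With these inequalities, the bound of \Cref{diameter stability} becomes $O(n^{k+\ep})$.
\end{itemize}

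The main obstacle is that \Cref{diameter stability} is stated under the hypothesis $\ex(n,T,\c{F})=o(n^{k+1})$. The maximality of $k$ only gives that $\ex(n,T,\c{F})$ is not $\Omega(n^{k+1})$, which is weaker. To close this gap, I would use \Cref{k-1 edges for trees}. Failure of the $\Omega_{T,k}(n^{k+1})$ bound means that some $q$ exists such that every $\c{F}$-free graph is $\c{F}^q_{T,k+1}$-free. I would then verify that the proof of \Cref{diameter stability} uses the $o(n^{k+1})$ assumption only through this structural consequence, via the dichotomy of \Cref{k-1 edges for trees}. If so, the same argument yields $\ex(n,T,\c{F})=O(n^{k+\frac{k(k-1)}{d-k}})=O(n^{k+\ep})$ under our weaker assumption, which completes the proof.
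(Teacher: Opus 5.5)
Your proposal is correct and is essentially the paper's proof. You split into many versus few leaves, use \Cref{leaf stability corollary} in the first case, and get a large diameter from a leaf--diameter tradeoff in the second. You then resolve the obstacle you flag exactly as the paper does: apply \Cref{k-1 edges for trees} to the maximal $k$ (the mere failure of $\Omega(n^{k+1})$ already yields the required $q$), and then \Cref{family bound given diameter}, instead of invoking \Cref{diameter stability} as stated.

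Your separate ``small trees'' case is unnecessary. Since $\ell\ge 2$, the case $\ell<s$ forces $s>2$, i.e.\ $\ep v(T)>8$ and hence $v(T)\ge 9$, where $v(T)-2\ge s^2+s$ already holds. Only two loose ends need a one-line check of their own: $T=K_2$, which \Cref{leaf stability corollary} excludes, and the case $k=0$, where \Cref{k-1 edges for trees} gives $O(1)$ directly.
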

	This in turn yields the surprising fact that every non-integral exponents can be realized by only finitely many trees.
	
	\begin{corollary}\label{non-integral exponents}
		For every non-integral number $r\in \mathbb{R}_{\ge 0}\sm \mathbb{Z}$, there exists at most finitely many trees $T$ with the property that there exists a family of graphs $\c{F}$ satisfying $\ex(n,T,\c{F})=\Theta(n^r)$.
	\end{corollary}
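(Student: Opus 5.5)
We deduce \Cref{non-integral exponents} from \Cref{diameter corollary}. Fix a non-integral $r>0$; the case $r=0$ is excluded because $0$ is an integer. Let $\ep:=\min\{r-\lfloor r\rfloor,\ \lceil r\rceil - r\}/2$. Then $0<\ep<1/2\le 1$, and no interval $[k,k+\ep]$ with $k\in\mathbb{Z}$ contains $r$. Indeed, if $k\le r$ then $k\le\lfloor r\rfloor$, so $k+\ep\le\lfloor r\rfloor+\ep<r$.

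Suppose $T$ is a tree and $\c{F}$ is a family with $\ex(n,T,\c{F})=\Theta(n^r)$. Then $\ex(n,T,\c{F})>0$ for all sufficiently large $n$, because the lower bound $\Omega(n^r)$ is positive for large $n$. Assume also that $(\ep v(T))^{1/3}\ge r$, that is, $v(T)\ge r^3/\ep$. Then $\ex(n,T,\c{F})=O(n^r)=O(n^{(\ep v(T))^{1/3}})$, so the hypotheses of \Cref{diameter corollary} hold. It gives an integer $k$ with $\Omega(n^k)=\ex(n,T,\c{F})=O(n^{k+\ep})$.

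Combining these with $\ex(n,T,\c{F})=\Theta(n^r)$ gives $n^k=O(n^r)$ and $n^r=O(n^{k+\ep})$. Hence $k\le r\le k+\ep$, which contradicts the choice of $\ep$. So every tree $T$ admitting such a family satisfies $v(T)<r^3/\ep$. There are only finitely many trees, up to isomorphism, on fewer than $r^3/\ep$ vertices, which proves the corollary.

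This is a direct application, so there is no real obstacle. The only points needing care are two. First, the choice of $\ep$ must keep $r$ out of every window $[k,k+\ep]$; taking half the distance from $r$ to the nearest integer does this. Second, the positivity hypothesis of \Cref{diameter corollary} follows from the $\Theta$ lower bound.
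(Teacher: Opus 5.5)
Your proof is correct and follows the paper's argument: apply \Cref{diameter corollary} to any tree with $v(T)\ge r^3/\ep$, and observe that $\Theta(n^r)$ cannot lie between $\Omega(n^k)$ and $O(n^{k+\ep})$ for any integer $k$. The only difference is cosmetic. The paper takes $\ep=(r-\lfloor r\rfloor)/2$, and your extra minimum with $\lceil r\rceil-r$ is harmless but unnecessary, since the argument only uses $\ep<r-\lfloor r\rfloor$.
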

	
	One can use \Cref{k-1 edges for trees} to show that for every tree $T$ there exists a (finite) family $\c{F}$ with $\ex(n,T,\c{F})=\Theta(n^\ell)$ where $\ell$ is the number of leaves of $T$, and this implies that the assumption $r\notin \mathbb{Z}$ is needed for \Cref{non-integral exponents} to hold.
	
	The applications above show the utility of our main result \Cref{k-1 edges for trees}. However, it is natural to ask whether these bounds are tight.  In Appendix~\ref{section appendix} we show that the upper bound of \Cref{k-1 edges for trees} (as well as that of \Cref{diameter stability}) is tight whenever $T=P_t$ is the path graph on $t$ vertices and $\c{F}=\c{F}_{P_t,k+1}^q$ with $q$ large provided $t\equiv 2\mod k-1$.  In particular, this shows that for every $k$ there exist infinitely many trees for which the upper bound of \Cref{k-1 edges for trees} is best possible.
	
	At this point the reader might ask what happens for paths with $t\not \equiv 2\mod k-1$.  In this case it turns out we can refine \Cref{k-1 edges for trees} to give a further sharpening of the upper bound.
	
	\begin{theorem}\label{k-2 edges for paths}
		Let $\c{F}$ be a family of graphs, and $k,t\ge 3$ integers with $t\not \equiv 2\mod k-1$.  If there exists an integer $q$ such that every $\c{F}$-free graph is $\c{F}_{P_t,k+1}^q$-free, then
		\[
		\ex(n,P_t,\c{F})=O_{t,k,q}(\ex(n,\c{F})^{k-1}\cdot n),
		\]
		and if no such $q$ exists then
		\[
		\ex(n,P_t,\c{F})=\Om_{t,k}(n^{k+1}).
		\]
	\end{theorem}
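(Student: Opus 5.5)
The lower bound $\Om_{t,k}(n^{k+1})$ when no such $q$ exists follows from \Cref{k-1 edges for trees} applied to $T=P_t$: the hypothesis is the same, so that part of \Cref{k-1 edges for trees} transfers verbatim. The real work is the upper bound $O(\ex(n,\c{F})^{k-1}n)$. I would reuse the machinery behind the tree upper bound and show that, for paths with $t\not\equiv 2\mod k-1$, one of the $k$ "edge factors" in the counting can always be replaced by a free vertex choice costing only a factor $n$.

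The plan is as follows. Let $G$ be an $n$-vertex $\c{F}$-free graph, hence $\c{F}^q_{P_t,k+1}$-free. First I would recall the structure used for \Cref{k-1 edges for trees}. One colours or labels copies of $P_t$ by which vertex positions are "low degree" or "high multiplicity," and uses the Helly-type theorem for subtrees of $T$. The conclusion is that, since no $P_t$ appears $q$ times glued along a set $R$ whose removal leaves $k+1$ components, every copy of $P_t$ has a set of at most $k$ edges whose images determine the copy up to $O(1)$ choices. Summing over $k$-tuples of edges of $G$ gives $O(e(G)^k)$. For a path, $R$ splits $P_t$ into at most $|R|+1$ intervals, so $\c{F}^q_{P_t,k+1}$-freeness says that for any set $R$ of at least $k$ positions (with $k+1$ nonempty gaps), the number of extensions of a fixed image of $R$ is $<q$ in a suitable "disjoint" sense. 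By a sunflower/Erdős--Rado argument, the number of extensions is then $O(1)$. So a copy of $P_t$ is determined, up to $O(1)$, by the images of any $k$ positions that cut $P_t$ into $k+1$ nonempty pieces.

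Next I would do the counting. Choose positions $1\le p_1<\dots<p_k\le t$ so that the vertex set $\{p_1,\dots,p_k\}$ separates $P_t$ into $k+1$ nonempty segments. Encode a copy by the images of $p_1,\dots,p_k$. The naive bound is $n^k$, and pairing positions into edges gives $e(G)^{k}$. To get $e(G)^{k-1}n$ I want $2(k-1)+1=2k-1$ "encoded" vertices that form $k-1$ edges plus one lone vertex. I also want the union of their positions to contain a separating $k$-set, or, more flexibly, to be such that each $P_t$ copy is recovered with $O(1)$ ambiguity. The arithmetic condition enters here. Take positions forming $k-1$ consecutive edge pairs $\{a_i,a_i+1\}$ and one singleton $b$. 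Removing these leaves components, and I need the gap pattern to be realizable with all gaps nonempty, with the recovery working via iterated application of the $O(1)$-extension bound. I would search for a "sliding window" argument in which, for each copy, we pick a shift $s$ and positions spaced by roughly $(t-2)/(k-1)$. When $t-2$ is divisible by $k-1$, the spacing is forced to be exact and all $k$ positions pair rigidly into edges, which matches the tightness example. Otherwise, the slack of $t-2 \bmod (k-1)$ lets one of the positions be a singleton while the rest pair with neighbours. I would verify that the right choice of pairing depends only on an $O(1)$ amount of information, such as the degree pattern of the copy, handled by pigeonholing over finitely many types.

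The main obstacle is making the recovery argument rigorous: showing that each copy of $P_t$ in $G$ is charged to some configuration of $k-1$ edges plus one vertex with $O_{t,k,q}(1)$ multiplicity. The $\c{F}^q_{P_t,k+1}$-freeness only controls extensions from separating sets of size at least $k$. Since the positions in my encoding are only $2k-1$ vertices in specific adjacency patterns, I would first try a dichotomy. Either the copy has a "thin" position, whose neighbourhood-restricted choices are $O(1)$ given the rest, so that it can be dropped; or every position is "thick". In the thick case, the Helly theorem for paths (intervals) produces $k+1$ pairwise disjoint thick intervals, contradicting $\c{F}^q_{P_t,k+1}$-freeness. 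The condition $t\not\equiv 2\pmod{k-1}$ should be exactly what guarantees that the intervals left after removing the dropped positions cannot all be forced to have edges at both ends, freeing one position to be a singleton.
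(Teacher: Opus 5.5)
Your lower bound is fine: it is the dichotomy of \Cref{proposition:keyObservation}, exactly as in \Cref{k-1 edges for trees}. The upper bound, however, rests on two claims that fail.

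\textbf{The recovery claim is false.} You assert that a copy of $P_t$ is determined, up to $O(1)$ choices, by the images of \emph{any} $k$ positions cutting $P_t$ into $k+1$ nonempty pieces. That would bound the number of copies by $O(n^k)$. This contradicts \Cref{tightness for paths}, which gives $\mathcal{F}^q_{P_t,k+1}$-free graphs with $\Omega(n^{k+(r+2)/b})$ copies. Your sunflower step breaks because the kernel $R'\supseteq R$ it produces may contain entire components of $P_t-R$, so $P_t-R'$ need not have $k+1$ components. For example, take $(P_t)^n_{V(P_t)\setminus\{x\}}$ with $x$ interior. It is $\mathcal{F}^q_{P_t,k+1}$-free for $k\ge 3$ and $q\ge t$, yet its $n$ glued copies agree on every separating $k$-set avoiding $x$. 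Freeness only controls the highly extendable subtrees $\mathcal{T}_{\phi,\Phi,C}$. The encoding set must therefore be chosen copy by copy so as to pierce them, which is why the Edge Helly Theorem is needed already for the $e(G)^k$ bound.

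\textbf{The thick case is the statement the paper flags as false.} At the start of \Cref{section refinement for paths} the paper notes that if $\mathcal{T}_{\phi,\Phi,C}$ cannot be pierced by $k-1$ edges and one vertex, you do \emph{not} get $k+1$ highly extendable intervals that are pairwise far apart. Mere disjointness would not suffice anyway: the gluing in \Cref{GT Pierces by Edges} needs $(V(T'_i)\cup N(T'_i))\cap V(T'_j)=\emptyset$. What you actually get is a nice tuple $T_1,\dots,T_{2k}$ of minimal intervals in which $T_{2i-1}$ and $T_{2i}$ abut. That yields only $k$ far-apart pieces.

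The paper closes this gap with ideas absent from your plan:
\begin{enumerate}[label=(\roman*)]
\item a refinement sequence $\Phi_t\subseteq\cdots\subseteq\Phi_1$ with a stable subtree system (\Cref{good sequence or few copies});
\item the failure of \emph{pseudo}-piercing, which forces the sets $\mathcal{L}_{\phi,j},\mathcal{R}_{\phi,j}$ to be large, so one can zigzag through the $t$ levels and chain $t$ theta graphs $\theta_{q,b}$, with $b=|V(T_{i^*})|+1$, into a generalized theta graph $\theta_{q,b,t}$ (\Cref{nice tuples can zigzag}), a configuration invisible from any single $\phi$;
\item \Cref{theta implies F}, which finds a member of $\mathcal{F}^q_{P_t,k+1}$ inside $\theta_{q,b,t}$ whenever $b\le\frac{t-3}{k-1}$.
\end{enumerate}
The congruence $t\not\equiv 2\pmod{k-1}$ enters only through the pigeonhole argument of \Cref{nice tuples have a small tree}. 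It produces such a short $T_{i^*}$ among the gaps between the nice intervals. \Cref{ell2 equals 2} and \Cref{nice tuples dont have gaps} then ensure $\ell_{i^*}-1\in V(T_{i^*-1})$ and $r_{i^*}+1\in V(T_{i^*+1})$. Your sliding-window heuristic does not capture this mechanism.
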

	That is, under the condition $t\not \equiv 2\mod k-1$ we can improve upon the upper bound of \Cref{k-1 edges for trees} by replacing one of the $\ex(n,\c{F})$ terms with $n$. This result turns out to be tight whenever $t\equiv 1\mod k-1$; see Appendix~\ref{section appendix} for details.  In addition to being of interest in its own right, we believe \Cref{k-2 edges for paths} serves as a proof-of-concept that further refinements can be made for other trees.  In particular, we comment on what might be true for arbitrary path lengths in the concluding remarks.

	
	\subsection{Helly Theorems}
	One of our main tools for proving \Cref{k-1 edges for trees} and its refinement \Cref{k-2 edges for paths} are generalizations of the classical Helly theorem for trees which we believe are of independent interest.
	
	Historically, the first Helly-type theorems appeared in convex geometry when Helly~\cite{H1923} proved that if $\c{C}$ is a collection of convex sets in $\mathbb{R}^d$ such that any subcollection $\c{C}'\sub \c{C}$ of size at least $d+1$ contains a point $v'$ in every element of $\c{C}'$, then there exists a point $v$ which lies in every element of $\c{C}$.  There have since been many other Helly-type theorems which have been studied within convex geometry and beyond \cite{H0,H1,H2,H3,H4}.
	
	In order to recall the statement of the classical Helly Theorem for trees, as well as to state our new variant of this result, we introduce some definitions.

	\begin{definition}
		Given a tree $T$, a \textbf{subtree system} $\mathcal{T}$ of $T$ is a collection of subtrees of $T$.  We say that a set of vertices $U\sub V(T)$ \textbf{pierces} a subtree $T'\sub T$ if $V(T')\cap U\ne \emptyset$, and we say that a set $U$ pierces a subtree system $\mathcal{T}$ if it pierces every $T'\in \c{T}$.  Similarly, we say that a set of edges $E\sub E(T)$ \textbf{pierces} a subtree $T'\sub T$ if a vertex in $V(T')$ is incident with an edge in $E$, and we say $E$ pierces a subtree system $\mathcal{T}$ if it pierces every $T'\in \c{T}$.
		
	\end{definition}
	
	In this language, the classical Helly Theorem for trees states that if $\c{T}$ is a subtree system such that any pair of elements from $\c{T}$ is pierced by a single vertex, then there exists a vertex which pierces all of $\mathcal{T}$. Note that the case of $T$ being a path essentially corresponds to the $d=1$ case of the classical Helly Theorem.
	
	To prove \Cref{k-1 edges for trees}, we will need a variant of the Helly Theorem for trees which guarantees a set of \textit{edges} which pierces $\c{T}$.
	
	\begin{theorem}[Edge Helly Theorem]\label{edge Helly}
		Let $T$ be a tree, $\c{T}$ a subtree system of $T$, and $k\ge 0$ an integer.  If every subcollection $\c{T}'\sub \c{T}$ of size at most $k+1$ can be pierced by at most $k$ edges, then $\c{T}$ can be pierced by at most $k$ edges.
	\end{theorem}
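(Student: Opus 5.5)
We argue by induction on $k$, using a greedy choice of one edge together with an exchange argument. Root $T$ at an arbitrary vertex $r$. For a subtree $S\in\c{T}$, let $\mathrm{top}(S)$ be its unique vertex closest to $r$. Every vertex of $S$ lies in the subtree $T_{\mathrm{top}(S)}$ of descendants of $\mathrm{top}(S)$.

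\emph{Base case $k=0$.} The hypothesis says every single $S\in\c{T}$ is pierced by the empty edge set. This is impossible, so $\c{T}=\emptyset$, which is pierced by $0$ edges.

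\emph{Degenerate cases for $k\ge 1$.} If $\c{T}=\emptyset$ there is nothing to prove. If $T$ has no edges, then no subtree is pierced by any edge set, so the hypothesis forces $\c{T}=\emptyset$.

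\emph{Choice of the edge.} Pick $S_0\in\c{T}$ such that $v:=\mathrm{top}(S_0)$ has maximum depth. If $v\neq r$, let $e=vp$, where $p$ is the parent of $v$. If $v=r$, then every $S\in\c{T}$ has top $r$, so every $S$ contains $r$; take any edge $e$ at $r$, and then $\{e\}$ pierces $\c{T}$.

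\emph{Exchange claim.} Any edge $f$ touching a vertex of $S_0$ pierces only subtrees that $e$ also pierces. Write $f=ww'$ with $w\in V(S_0)\sub V(T_v)$. Then $w'\in V(T_v)\cup\{p\}$. Now let $S\in\c{T}$ contain a vertex $x\in V(T_v)$. By maximality of the depth of $v$, $\mathrm{top}(S)$ has depth at most that of $v$. Hence $\mathrm{top}(S)$ is not a proper descendant of $v$. Since $S$ is connected and contains both $x$ and $\mathrm{top}(S)$, it contains the path between them, which passes through $v$. So every $S$ pierced by $f$ contains $v$ or $p$, and is therefore pierced by $e$.

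\emph{Induction step.} Let $\c{T}'\sub\c{T}$ be the subtrees not pierced by $e$. We verify the hypothesis for $\c{T}'$ with $k-1$. Take $\c{C}\sub\c{T}'$ with $|\c{C}|\le k$. The family $\c{C}\cup\{S_0\}$ has size at most $k+1$, so by hypothesis it is pierced by a set $E$ of at most $k$ edges. Some $f\in E$ touches $S_0$. By the exchange claim, every member of $\c{C}$ pierced by $f$ is pierced by $e$. But no member of $\c{C}$ is pierced by $e$, so none is pierced by $f$. Hence $E\setminus\{f\}$, which has at most $k-1$ edges, pierces $\c{C}$. By induction, $\c{T}'$ is pierced by some set $E'$ of at most $k-1$ edges, and then $E'\cup\{e\}$ pierces $\c{T}$ with at most $k$ edges.

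The main obstacle is the exchange claim: we must choose $e$ so that it dominates every edge that could pierce $S_0$. The choice of $S_0$ with the deepest top, and $e$ as the edge from that top to its parent, is what makes this work, because connectivity forces every relevant subtree through $v$. Once that is in place, the induction is routine.
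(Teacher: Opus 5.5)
Your proof is correct, but it takes a different route from the paper.

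The paper uses a minimal counterexample on $(v(T),k)$ and works at a leaf $x$ with neighbour $y$. If the singleton $\{x\}$ lies in $\mathcal{T}$, then $xy$ is the only edge that can pierce it. So $xy$ is forced into every piercing set, and one passes to the subtrees avoiding $x,y$ with $k-1$ in place of $k$. If $\{x\}\notin\mathcal{T}$, every member containing $x$ also contains $y$. One then deletes $x$ from $T$ and from every subtree, swapping $xy$ for another edge at $y$ when necessary, and this reduces $v(T)$.

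You instead root $T$ and take the member $S_0$ whose top $v$ is deepest. You then prove a domination lemma: any member of $\mathcal{T}$ pierced by an edge meeting $S_0$ contains $v$ or its parent $p$. Hence $vp$ can replace whichever edge pierces $S_0$.

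This exchange argument does the work of the paper's forced-edge case without ever modifying $T$ or the subtree system. As a result you need only induction on $k$. Your proof is effectively a greedy algorithm: repeatedly take the deepest top, add its parent edge, and discard what it pierces. It is the edge analogue of the standard greedy proof of the vertex Helly property for subtrees.

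The paper's version avoids rooting and depth bookkeeping. It replaces your exchange lemma by the triviality that a leaf singleton has a unique incident edge. The cost is a second induction parameter and the subtree-trimming step of its second case.
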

	
	As far as we are aware, our result \Cref{k-1 edges for trees} is the first known Tur\'an result which is proven using Helly-type theorems.  Moreover, one can show that our Edge Helly Theorem implies the classical Vertex Helly Theorem, and as such it is at least as strong as this classical result.
	
	In order to prove \Cref{k-2 edges for paths} for paths, one might expect that we need a Helly-type Theorem for paths which involves pirecing by a set of edges and a single vertex.  While our argument implicitly uses ideas of this form, ultimately the proof requires us to use a more technical notion of piercing which we call ``pseudo-piercing''. We defer the precise formulation of pseudo-piercing until \Cref{definition pseudo piercing} due to it's technical nature, but we emphasize here that we believe the notion of pseudo-piercing is novel and may provide avenues for future applications of Helly-type arguments to settings where such methods have not previously been used.
	
	\subsection{Organization, Notation, and Conventions}
	The rest of the paper is organized as follows. In \Cref{section applications}, we show how to use \Cref{k-1 edges for trees} to prove Theorems~\ref{tree generalized exponents close to integers},~\ref{diameter stability} and~\ref{diameter corollary}, along with Corollaries~\ref{tree containing forest gives integers},~\ref{leaf stability corollary} and~\ref{non-integral exponents}. In \Cref{section generalized Turan preliminaries}, we give some technical definitions and preliminary results, incuding the proof of \Cref{edge Helly}, which will all be necessary for proving our main theorem (\Cref{k-1 edges for trees}) and also \Cref{k-2 edges for paths}. In \Cref{section main theorem proof}, we prove \Cref{k-1 edges for trees} and also introduce the notion of pseudo-piercing. In \Cref{section refinement for paths}, we show how to refine \Cref{k-1 edges for trees} in the case of paths to prove \Cref{k-2 edges for paths}. Finally, in \Cref{section concluding remarks}, we provide some concluding remarks and open questions.

	We note that if $\mathcal{F}$ contains an empty graph, then $\ex(n,H,\mathcal{F})$ is not defined for large $n$.  As such we will silently assume through this work that every graph in $\mathcal{F}$ contains at least one edge.
	
	Given a graph $H$ we will write $v(H):=|V(H)|$ and $e(H):=|E(H)|$.  In our results and proofs involving a graph $H$ which we count within another graph $G$, we will often add hats to vertices and sets associated to $H$ in order to distinguish them from vertices and sets associated to $G$.  For example, $\hat{x}$ will refer to a vertex in $H$ while $x$ refers to one in $G$.
	
	For integers $a,b,c\ge 1$, we define the \textit{generalized theta graph} $\theta_{a,b,c}$ to be the graph obtained by taking $a$ paths on $1+cb$ vertices which all agree on the vertices $1,1+b,1+2b,\ldots,1+cb$ and which are otherwise pairwise disjoint.  Equivalently, $\theta_{a,b,c}=(P_{1+cb})_R^a$ where $R=\{v_1,v_{1+b},\ldots,v_{1+cb}\}$. When $c=1$ the graph $\theta_{a,b,c}$ is simply a so-called \emph{theta graph}, and we write $\theta_{a,b}:=\theta_{a,b,1}$.

	\section{Applications}\label{section applications}
	
	In this section we establish the applications of \Cref{k-1 edges for trees} from the introduction in the order they were stated.  Most of these applications have succinct proofs, with the one minor exception being the diameter result \Cref{diameter stability}.
	
	\begin{proof}[Proof of \Cref{tree generalized exponents close to integers}]
		Let $k\ge 0$ be the largest integer such that $\ex(n,T,\c{F})=\Omega(n^k)$.  Because $\ex(n,T,\c{F})=\Om(n^{k+1})$ does not hold, by \Cref{k-1 edges for trees} we must have
		\[
		\ex(n,T,\c{F})=O(\ex(n,\c{F})^{k})=O(n^{k+k\ep}).\qedhere
		\]
	\end{proof}
	\Cref{tree containing forest gives integers} follows immediately from \Cref{tree generalized exponents close to integers} and the fact that $\ex(n,\c{F})=O(n)$ whenever $\c{F}$ contains a forest (see e.g. Theorem 2.32 in~\cite{FS2013}), so we omit a formal proof.
	
	\begin{proof}[Proof of \Cref{leaf stability corollary}]
		If $\c{F}$ contains a forest then the result follows from \Cref{tree containing forest gives integers}.  Otherwise, consider the graph $G$ obtained by duplicating each leaf of $T$ a total of $n/v(T)$ times.  It is not difficult to see that $G$ is a tree for $T\ne K_2$, and therefore is $\c{F}$-free since the only subgraphs of $G$ are forests.  Moreover, $G$ has at most $n$ vertices and contains at least $\Omega(n^\ell)$ copies of $T$, implying that $\ex(n,T,\c{F})=\Omega(n^\ell)$.  This combined with our hypothesis gives $\ex(n,T,\c{F})=\Theta(n^\ell)$.
	\end{proof}
	
	Before we can prove \Cref{diameter stability}, we need a bound on the extremal number of the generalized theta graph $\theta_{a,b,c}$, which we will achieve by combining a classical result of Bondy and Simonvits~\cite{BS1974} with a recent result of Dong, Gao, and Liu~\cite{DGL2025}. To state this latter result, we need a technical definition:  given two graphs $H_1,H_2$ and vertices $u_1\in V(H_1)$ and $u_2\in V(H_2)$, define $H_1^{u_1}\odot H_2^{u_2}$ to be the graph obtained by taking the union of $H_1$ and $H_2$ and identifying $u_1$ with $u_2$.
	
	\begin{theorem}[\cite{DGL2025}]\label{gluing}
		If $H_1,H_2$ are two copies of the same connected bipartite graph $H$ and if $u_1\in V(H_1)$ and $u_2\in V(H_2)$ come from the same part of the bipartition, then 
		\[
		\ex(n,H_1^{u_1}\odot H_2^{u_2})=\Theta(\ex(n,H)).
		\]
	\end{theorem}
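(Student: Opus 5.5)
The lower bound is immediate. $H$ is a subgraph of $H_1^{u_1}\odot H_2^{u_2}$, so every $H$-free graph is also free of the glued graph, and therefore $\ex(n,H_1^{u_1}\odot H_2^{u_2})\ge \ex(n,H)$. The work is in the upper bound. I would aim to show that every $n$-vertex graph $G$ with $e(G)\ge C\,\ex(n,H)$ contains the glued graph, where $C=C(H)$ is a large constant.

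The first step is to regularize $G$. Pass to a bipartite subgraph with parts $A,B$ keeping at least half the edges. Since $H$ is connected and bipartite, every copy of $H$ in this subgraph has its two colour classes in $A$ and $B$ in one of two ways. Losing another factor of $2$, we may restrict attention to copies in which the class $P$ of $H$ containing $u$ is mapped into $A$.

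Now suppose $G$ has no glued copy. Fix $a\in A$ and consider the copies of $H$ in $G$ that send $u$ to $a$. Any two of them must meet outside $a$, since otherwise together they form $H_1^{u_1}\odot H_2^{u_2}$; this is exactly where the hypothesis that $u_1,u_2$ lie in the same part is used. Fixing one such copy, its vertex set minus $a$ gives a set $S_a$ with $|S_a|\le v(H)-1$ that meets every copy of $H$ rooted at $a$.

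The remaining step, which I expect to be the main obstacle, is to convert these small transversals into a large $H$-free subgraph of $G$. Such a subgraph would have at most $\ex(n,H)$ edges, contradicting $e(G)\ge C\,\ex(n,H)$ once $C$ is large. I would try a random sparsification. Choose a random set $Z\subseteq A$ by including each vertex with some constant probability $p$. Let $G'$ consist of the edges of $G$ between $Z$ and $B$, with $S_z\cup N(S_z)$ removed locally around each $z\in Z$ in a controlled way. The aim is that any copy of $H$ surviving in $G'$ has a root $z\in Z$ playing the role of $u$, while also avoiding $S_z$, which is impossible. The delicate point is to do this deletion so that only a constant fraction of edges is lost. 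Each $S_z$ has bounded size, so in expectation each vertex lies in boundedly many sets $S_z$ only after an averaging argument. I would first try to prune vertices lying in too many $S_z$, using a degree-regularization (for instance a K\H{o}v\'ari–S\'os–Tur\'an style count, or passing to an almost-regular subgraph via Erd\H{o}s–Simonovits). I would then check that the expected number of surviving edges is $\Omega(e(G))$.

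If this direct cleaning fails, the fallback is an iterative argument. Repeatedly remove, for a suitably chosen root $a$, all edges incident to $S_a$ that are used by copies rooted at $a$. Then bound the total number of deleted edges by $O(\ex(n,H))$ via a charging scheme against the bounded-size transversals.
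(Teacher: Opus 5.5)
\textbf{There is a genuine gap: the step that turns the blocking sets into an upper bound is missing.} Your lower bound is fine. So is the observation that, when $u_1,u_2$ are both copies of one vertex $u$, the copies of $H$ sending $u$ to a fixed $a$ pairwise meet outside $a$, giving a blocking set $S_a$ with $|S_a|\le v(H)-1$. But the proposal stops exactly where the proof has to happen, and neither route you sketch is a proof.

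Removing $S_z\cup N(S_z)$ ``locally around $z$'' is not a subgraph operation. A copy rooted at $z$ is only known to pass through some vertex of $S_z$, possibly far from $z$, so to destroy it you must delete all edges at $S_z$. A vertex $s$ then loses its edges whenever \emph{any} sampled root has $s$ in its blocking set. The total cost $\sum_{z\in Z}\sum_{s\in S_z}\deg(s)$ is uncontrolled, because blocking sets may consist of high-degree vertices. Your remedy (pruning plus an almost-regular subgraph) is not carried out. Making such a regularization compatible with an arbitrary function $\ex(n,H)$, which is not known to be a power of $n$, is itself a step you have not addressed.

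Moreover, your $G'$ still contains copies of $H$ with $u$ mapped into $B$, and nothing blocks them. An $H$-free subgraph must destroy copies in both orientations, so restricting attention to one orientation buys nothing. The charging fallback is not specified at all.

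\textbf{The missing idea is to make each vertex's survival depend only on the boundedly many coins of its own blocking set.} Define $S_a$ for every vertex $a$ of $G$, with $S_a=\emptyset$ if no copy sends $u$ to $a$. Put each vertex into $W$ independently with probability $1/2$, and let $W'=\{a\in W: S_a\cap W=\emptyset\}$. A copy of $H$ in $G[W']$ sends $u$ to some $a\in W'$ and must meet $S_a$, which is disjoint from $W\supseteq W'$. So $G[W']$ is $H$-free.

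An edge $ab$ with $b\notin S_a$ and $a\notin S_b$ survives with probability $2^{-2-|S_a\cup S_b|}\ge 4^{-v(H)}$. At most $\sum_a|S_a|\le (v(H)-1)n$ edges fail this condition. Hence
\[
\ex(n,H)\ \ge\ \mathbb{E}\bigl[e(G[W'])\bigr]\ \ge\ 4^{-v(H)}\bigl(e(G)-(v(H)-1)n\bigr),
\]
so $e(G)=O(\ex(n,H))$, using $\ex(n,H)\ge\lfloor n/2\rfloor$ (a perfect matching), which holds once $e(H)\ge 2$. For $H=K_2$ the statement as written is false, since the glued graph is $P_3$. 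No bipartition, regularization or pruning is needed.

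\textbf{This settles only the same-vertex case.} That case is all \Cref{gluing theta graphs} uses, but the statement allows $u_1,u_2$ to be copies of distinct vertices $u\ne u'$ of the same part. There, freeness only says that every copy with $u\mapsto a$ meets every copy with $u'\mapsto a$ outside $a$. A vertex $a$ may be the image of $u$ in some copy but of $u'$ in none; take $H=P_5$, $u$ an end, $u'$ the middle vertex, and $a$ of degree one in $G$. Then no blocking set need exist.

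Your remark that the same-part hypothesis is used in the intersection step is therefore mistaken. In your setting it holds automatically and does no work, and the case where it matters is not addressed. The paper itself gives no proof of this theorem; it quotes it from \cite{DGL2025}.
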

	
	\begin{corollary}\label{gluing theta graphs}
		We have
		\[
		\ex(n,\theta_{a,b,c})=O_{a,c}(n^{1+1/b}).
		\]
	\end{corollary}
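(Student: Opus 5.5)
We prove $\ex(n,\theta_{a,b,c})=O_{a,c}(n^{1+1/b})$ by induction on $c$. The base case $c=1$ is the Bondy–Simonovits/Faudree–Simonovits bound for theta graphs, $\ex(n,\theta_{a,b})=O_a(n^{1+1/b})$. For the inductive step we use the gluing result, \Cref{gluing}. Here $b\ge 1$ is fixed, and the implicit constants may depend on $a$ and $c$.

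First reduce to the case where $c$ is a power of $2$. Since $\theta_{a,b,c}$ is a subgraph of $\theta_{a,b,c'}$ for every $c'\ge c$, and a graph containing $\theta_{a,b,c'}$ contains $\theta_{a,b,c}$, we have $\ex(n,\theta_{a,b,c})\le \ex(n,\theta_{a,b,c'})$. So it suffices to treat $c=2^j$, by induction on $j$. The base case $j=0$ is the theta graph bound above, stated e.g. in \cite{BS1974}.

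For the inductive step, note that $\theta_{a,b,2c}$ is obtained from two copies of $\theta_{a,b,c}$ by identifying an endpoint vertex of one with an endpoint vertex of the other. Concretely, let $H=\theta_{a,b,c}=(P_{1+cb})^a_R$, let $u_1$ be the image of $v_{1+cb}$ in the first copy and $u_2$ the image of $v_1$ in the second. Gluing them gives the $a$ paths on $1+2cb$ vertices sharing the vertices $v_1,v_{1+b},\dots,v_{1+2cb}$, which is exactly $\theta_{a,b,2c}$. The graph $H$ is connected and bipartite, being a union of paths with all identified vertices at distances that are multiples of $b$ along the paths, so parity is consistent. Applying \Cref{gluing} with $H_1=H_2=H$ requires $u_1$ and $u_2$ to lie in the same part of the bipartition.

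This parity check is the main obstacle. By the symmetry of $H$ reversing each path, there is an automorphism of $H$ swapping $v_1$ and $v_{1+cb}$. Hence "$u_1$ as $v_{1+cb}$ in copy one" and "$u_2$ as $v_1$ in copy two" can be realized with both glued vertices being the image of $v_1$. In other words, the glued graph is isomorphic to $H_1^{v_1}\odot H_2^{v_1}$, so the same-part condition holds trivially.

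\Cref{gluing} then gives $\ex(n,\theta_{a,b,2c})=\Theta(\ex(n,\theta_{a,b,c}))=O_{a,c}(n^{1+1/b})$, which completes the induction.
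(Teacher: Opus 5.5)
Your proof is correct and follows the paper's argument: reduce to $c=2^r$ by monotonicity, use the theta-graph bound as the base case, and apply \Cref{gluing} inductively to $\theta_{a,b,2^r}=\theta_{a,b,2^{r-1}}^u\odot\theta_{a,b,2^{r-1}}^u$. The paper glues at the first vertex $u=v_1$ of both copies directly, which is exactly the form your path-reversal automorphism reduces to, so the same-part condition holds for the same reason.
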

	
	\begin{proof}
		We will prove this in the case when $c$ is a power of 2.  This will imply the result in general since for any $c$, if $r$ is such that $2^{r-1}<c\le 2^r$ then $\theta_{a,b,c}\sub \theta_{a,b,2^r}$ and hence 
		\[
		\ex(n,\theta_{a,b,c})\le \ex(n,\theta_{a,b,2^r})=O_{a,2^r}(n^{1+1/b})=O_{a,c}(n^{1+1/b}).
		\]
		
		The base case follows from the classical upper bound of $\ex(n,\theta_{a,b})=O_a(n^{1+1/b})$ due to Bondy and Simonvits~\cite{BS1974} for theta graphs.  Observe also that for $r\ge 1$ we have $\theta_{a,b,2^r}=\theta_{a,b,2^{r-1}}^u \odot \theta_{a,b,2^{r-1}}^u$ where $u$ is the first vertex of the paths.  It follows from \Cref{gluing} and induction that $\ex(n,\theta_{a,b,2^r})=O_{a,2^r}(n^{1+1/b})$, proving the result.
	\end{proof}
	
	The other result we need to prove \Cref{diameter stability} is a folklore result which allows us to ignore ``tree-like'' structures when computing the extremal number of graphs which are not forests. Recall that the $2$-core of a graph $F$, which we will denote as $c_2(F)$, is the subgraph of $F$ obtained by iteratively deleting vertices of degree at most $1$.
	
	\begin{lemma}\label{lemma 2core}
		If $F$ contains a cycle, then $\ex(n,F)=\Theta_F(\ex(n,c_2(F)))$.
	\end{lemma}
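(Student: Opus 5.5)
The lower bound $\ex(n,F)\ge \ex(n,c_2(F))$ is immediate since $c_2(F)\sub F$, so every $c_2(F)$-free graph is $F$-free. For the upper bound I will show $\ex(n,F)=O_F(\ex(n,c_2(F)))$ by taking an $n$-vertex graph $G$ with $e(G)\ge C\,\ex(n,c_2(F))$, for a large constant $C=C(F)$, and finding a copy of $F$ in $G$. Since $F$ contains a cycle, $c_2(F)$ is nonempty and contains a cycle, so $\ex(n,c_2(F))\ge \ex(n,C_g)$ for some cycle length $g$, and therefore $\ex(n,c_2(F))\ge n-1$. Indeed, any tree is $C_g$-free, and if $c_2(F)$ contains a cycle then every forest is $c_2(F)$-free. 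This superlinear-or-linear lower bound is what allows us to absorb additive linear terms.

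First I will pass to a subgraph of large minimum degree. Set $d=v(F)$. Iteratively delete vertices of degree less than $d$ from $G$. This removes fewer than $dn$ edges. Because $C\,\ex(n,c_2(F))\ge C(n-1)>dn$ once $C$ is large (say $C\ge 2d$, for $n\ge 2$), the remaining graph $G'$ still has at least $(C-2d)\,\ex(n,c_2(F))$ edges and has minimum degree at least $d$. Note that $v(G')\le n$ and $\ex(\cdot,c_2(F))$ is monotone in the number of vertices, so if $C-2d\ge 1$ then $G'$ has more than $\ex(v(G'),c_2(F))$ edges. Hence $G'$ contains a copy of $c_2(F)$. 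To be careful, I would phrase this as $e(G')>\ex(n,c_2(F))\ge \ex(v(G'),c_2(F))$, which works after padding $G'$ with isolated vertices.

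Second, I will extend the copy of $c_2(F)$ to $F$ greedily. The graph $F$ is obtained from $c_2(F)$ by repeatedly attaching pendant trees: reversing the deletion order, each step adds a vertex of degree at most one. A vertex of degree zero corresponds to an isolated vertex or a disjoint tree component of $F$. I will order the vertices of $V(F)\sm V(c_2(F))$ so that each vertex has at most one neighbor among the earlier vertices or $c_2(F)$. Then I embed them one at a time into $G'$.
\begin{itemize}
\item If the vertex has a neighbor already embedded at $x$, then since $\deg_{G'}(x)\ge d>v(F)-1$, some neighbor of $x$ is unused, and I map the new vertex there.
\item If the vertex starts a new component, I use any unused vertex of $G'$. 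This exists because $G'$ has minimum degree $d$, so $v(G')>d\ge v(F)$.
\end{itemize}

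The main thing to handle carefully is the bookkeeping that the degeneracy deletion does not destroy too many edges relative to $\ex(n,c_2(F))$. This is exactly where the hypothesis that $F$ contains a cycle is used: it gives $\ex(n,c_2(F))=\Omega(n)$. For forests $c_2(F)$ is empty and the statement fails, which is why that hypothesis appears. Everything else is routine greedy embedding.
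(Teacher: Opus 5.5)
Your argument is correct and complete. The lower bound follows from $c_2(F)\sub F$. For the upper bound, passing to a subgraph of minimum degree at least $v(F)$ costs fewer than $v(F)\,n\le 2v(F)\,\ex(n,c_2(F))$ edges, using $\ex(n,c_2(F))\ge n-1$ since $c_2(F)$ still contains a cycle. The remaining graph then contains a copy of $c_2(F)$, which extends greedily along the reversed peeling order to a copy of $F$.

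The paper gives no proof of this lemma and cites it as folklore; your argument is the standard proof of that folklore fact. One cosmetic point: the parenthetical ``$C\ge 2d$'' should read $C\ge 2d+1$, which is what you actually use when you require $C-2d\ge 1$.
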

	
	Putting these pieces together gives a bound on the classical Tur\'an number for the family $\c{F}_{T,k+1}^q$ in terms of the diameter of $T$.
	
	\begin{proposition}\label{family bound given diameter}
		If $k\ge 1$ is an integer and if $T$ is a tree with diameter $d\ge 2k$, then
		\[\ex(n,\c{F}_{T,k+1}^q)=O_q(n^{1+\frac{k-1}{d-k}}).\]
	\end{proposition}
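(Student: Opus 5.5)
Since $\ex(n,\c{F})\le \ex(n,F)$ for every $F\in\c{F}$, it suffices to exhibit a single member of $\c{F}_{T,k+1}^q$ whose extremal number is $O_q(n^{1+\frac{k-1}{d-k}})$. I will build this member from a longest path in $T$ and compare it to a generalized theta graph, so that \Cref{gluing theta graphs} applies.

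\textbf{Choosing $R$.} Let $P=v_0v_1\cdots v_d$ be a longest path in $T$. I choose $R$ to consist of $k$ vertices of $P$, namely
\[
v_{0}\ne r_1=v_{i_1},\ r_2=v_{i_2},\ \ldots,\ r_k=v_{i_k}\ne v_d, \qquad 0<i_1<\cdots<i_k<d.
\]
Then $T-R$ has at least $k+1$ components, because the $k+1$ open segments of $P$ between consecutive $r_j$ (together with the two end segments) lie in different components. Hence $T_R^q\in\c{F}_{T,k+1}^q$.

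\textbf{Passing to the $2$-core.} In $T_R^q$, the part of each copy of $T$ hanging off the path structure is tree-like. I plan to show that $c_2(T_R^q)$ is exactly the union of the $q$ copies of the subpath $v_{i_1}\cdots v_{i_k}$, glued at $R$. For $q\ge 2$ this graph contains a cycle, so \Cref{lemma 2core} gives $\ex(n,T_R^q)=\Theta(\ex(n,c_2(T_R^q)))$.

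To justify the $2$-core claim, note that $T_R^q$ is the union of copies of $T$ meeting only in $R$. Every component of $T-R$ that is attached to at most one vertex of $R$ is a pendant tree in $T_R^q$, and these get pruned. The components attached to two vertices of $R$ are exactly the internal path segments between consecutive $r_j$, together with whatever hangs off them, and the hanging parts are pruned as well.

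\textbf{Reducing to a theta graph.} This core is a chain of $k-1$ theta graphs $\theta_{q,\ell_j}$, where $\ell_j=i_{j+1}-i_j$. To apply \Cref{gluing theta graphs} I want all gaps equal to some $b$, so that the core is exactly $\theta_{q,b,k-1}$. That requires $(k-1)b\le d-2$ with $b$ as large as possible. This only yields $b=\lfloor (d-2)/(k-1)\rfloor$, which falls short of the target exponent $1/b=\frac{k-1}{d-k}$. So taking all of $R$ on $P$ with the outer vertices strictly inside is not the right choice.

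\textbf{The main obstacle: getting $b=\frac{d-k}{k-1}$.} The resolution should be that the extra components need not come from path segments. Leaves, or whole branches, hanging off an $r_j$ already form separate components. The plan is to exploit this as follows:
\begin{itemize}
\item Put $r_1$ at $v_1$ and $r_k$ at $v_{d-1}$. The components $\{v_0\}$ and $\{v_d\}$ then still count, which gains two units of length.
\item Refine the count further so that the $k-1$ internal gaps have total length $d-k$, presumably by making the remaining gaps absorb single vertices. Equivalently, choose $R$ so that $T-R$ has exactly $k+1$ components, with the chain of internal segments spanning $d-k$ edges split into $k-1$ equal pieces of length $b=\frac{d-k}{k-1}$.
\end{itemize}
I expect the fine bookkeeping here to be the crux: handling divisibility by taking gaps $\lfloor\cdot\rfloor$ and $\lceil\cdot\rceil$, and verifying the component count.

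\textbf{Concluding.} Once the gaps are settled, the core is contained in $\theta_{q,b,c}$ for suitable $c\le k-1$, after subdividing consistently or simply bounding by monotonicity, since a subgraph has smaller extremal number. Then \Cref{gluing theta graphs} together with \Cref{lemma 2core} gives
\[
\ex(n,\c{F}_{T,k+1}^q)\le \ex(n,T_R^q)=O_q\bigl(n^{1+1/b}\bigr)=O_q\bigl(n^{1+\frac{k-1}{d-k}}\bigr).
\]
The hypothesis $d\ge 2k$ guarantees $b\ge 1$. If the divisibility forces unequal gaps, I would use the smallest gap $b_{\min}\ge \frac{d-k}{k-1}$ rounded appropriately, and contain the core in $\theta_{q,b_{\min},c}$ after contracting. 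This needs care, since contraction is not monotone for extremal numbers, so the fallback is to pick the gaps all equal and accept the floor.
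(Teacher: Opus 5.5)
\textbf{There is a genuine gap: the obstacle you identify is not real, and your proposal never completes the argument because of it.} You reject the equal-gap construction on the grounds that $b=\lfloor (d-2)/(k-1)\rfloor$ falls short of $\frac{d-k}{k-1}$, but it does not. Writing $d-2=(k-1)b+s$ with $0\le s\le k-2$ gives
\[
b=\frac{d-2-s}{k-1}\ \ge\ \frac{(d-2)-(k-2)}{k-1}=\frac{d-k}{k-1},
\]
so $1/b\le\frac{k-1}{d-k}$, which is exactly the required exponent. Your first attempt therefore already works with $R=\{v_1,v_{1+b},\dots,v_{1+(k-1)b}\}$:
\begin{enumerate}[label=(\roman*)]
\item for $k\ge 2$ the hypothesis $d\ge 2k$ gives $b\ge 2$, so each of the $k-1$ internal segments contains a vertex, namely $v_{ib}$;
\item $(k-1)b\le d-2$ gives $1+(k-1)b\le d-1$, so $v_0$ and $v_d$ lie in two further components;
\item hence $T_R^q\in\c{F}_{T,k+1}^q$ and $c_2(T_R^q)=\theta_{q,b,k-1}$, and \Cref{lemma 2core} with \Cref{gluing theta graphs} finishes the proof.
\end{enumerate}
This is precisely the paper's proof. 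As written, though, your proposal does not reach it. The proposed ``resolution'' (internal gaps spanning $d-k$ edges, extra components coming from branches) is never turned into an explicit choice of $R$. Since $d-k\le d-2$, it also asks for less than you already had. Your final fallback to equal gaps with the floor is the right choice, but you leave it resting on your earlier false claim that it is insufficient. The missing ingredient is just the displayed inequality.

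\textbf{The case $k=1$ is also not covered.} There $|R|=1$, so $T_R^q$ consists of $q$ copies of $T$ sharing a single vertex and is a tree. Consequently:
\begin{enumerate}[label=(\roman*)]
\item your claim that the core contains a cycle fails;
\item \Cref{lemma 2core} does not apply;
\item $\theta_{q,b,k-1}=\theta_{q,b,0}$ is degenerate.
\end{enumerate}
The fix is immediate. The target bound for $k=1$ is $O_q(n)$, and $T_R^q$ is a tree. Taking $R$ to be an interior vertex of $P$, which exists since $d\ge 2$, guarantees that $T-R$ has at least two components, so $\ex(n,T_R^q)=O_q(n)$. This case must still be treated separately, as the paper does using $R$ equal to the set of non-leaves of $T$.
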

	\begin{proof}
		We first consider the case $k\ge 2$.  Let $P=v_0v_1\dots v_d$ be a longest path in $T$, set $b:=\left\lfloor \frac{d-2}{k-1}\right\rfloor\geq 2$, and let \[R:=\{v_{1+ib}\mid 0\leq i\leq k-2\}\}.\] Then $T-R$ has at least $k+1$ connected components since the vertices $v_0,v_b,\dots,v_{(k-1)b},v_d$ all lie in distinct components (with this implicitly using $d> 1+(k-1)b$), so $T_R^q\in \mathcal{F}_{T,k+1}^q$. Furthermore, it is straightforward to see that $c_2(T_R^q)=\theta_{q,b,k-1}$.  Putting this all together and applying \Cref{lemma 2core} and \Cref{gluing theta graphs}, we have
		\begin{equation}
			\ex(n,\mathcal{F}_{T,k+1}^q)\leq \ex(n,T_R^q)=O(\ex(n,\theta_{q,b,k-1}))=O_q(n^{1+1/b})=O_q(n^{1+\frac{k-1}{d-k}}),
		\end{equation}\label{eq:diameter bound}
		with this last inequality using $b\ge \frac{(d-2)-(k-2)}{k-1}=\frac{d-k}{k-1}$,
		proving the result.
		
		For the case $k=1$, we aim to show $\ex(n,\c{F}_{T,2}^q)=O_q(n)$ whenever $T$ has diameter at least 2, i.e.\ whenever $T\ne K_1,K_2$.  Let $R\sub V(T)$ denote the set of non-leaves of $T$.  Since $T\ne K_1,K_2$, the graph $T-R$ consists of isolated vertices for each of the at least 2 leaves of $T$, so $T_R^q\in \c{F}_{T,2}^q$.  Moreover, $T_R^q$ is simply the tree obtained by duplicating each leaf $q$ times, so
		\[\ex(n,\c{F}_{T,2}^q)\le \ex(n,T_R^q)=O_q(n),\]
		proving the result.
	\end{proof}
	This together with our main theorem quickly gives our results related to the diameter of $T$.
	\begin{proof}[Proof of \Cref{diameter stability}]
		Since $\ex(n,T,\c{F})=o(n^{k+1})$, \Cref{k-1 edges for trees} gives a $q$ such that every graph which is $\mathcal{F}$-free is also $\mathcal{F}_{T,k+1}^q$-free, and \Cref{k-1 edges for trees} also implies that  
		\[
		\ex(n,T,\c{F})=O(\ex(n,\c{F})^{k})=O(\ex(n,\c{F}_{T,k+1}^q)^k)=(n^{k+\frac{k(k-1)}{d-k}}),
		\]
		with this last step using \Cref{family bound given diameter}.
	\end{proof}
	
	Our next corollary relies on the fact that trees either have many leaves or large diameter.  This statement can be made precise through the following result, which is a slight weakening of a statement from~\cite{L1975}.
	
	\begin{lemma}[\cite{L1975}]\label{leaf diameter tradeoff}
		If $T$ is a tree with $\ell$ leaves and diameter $d$, then $v(T)\le \half \ell d$.
	\end{lemma}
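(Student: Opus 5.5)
The plan is to prove the bound by covering $T$ with root-to-leaf paths from a central vertex or central edge. I should say at the outset that the literal inequality $v(T)\le \half\ell d$ cannot be proved as worded, because it fails by exactly one in small cases. The path $P_{d+1}$ has $\ell=2$ and $v=d+1>d$, and the star $K_{1,\ell}$ has $d=2$ and $v=\ell+1>\ell$. So what I will actually prove is the additive form
\[
v(T)\le \half \ell d+1,
\]
which I expect is what the weakening of~\cite{L1975} is meant to say. This is all that is needed downstream, where $v(T)$ is used only up to constant factors via $(\ep v(T))^{1/3}$. Throughout, assume $T$ has at least two vertices, so that $\ell\ge 2$ and $d\ge 1$.

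\textbf{Key observation.} Every vertex of a tree lies on the unique path from any fixed root to some leaf. To see this, extend the root-to-vertex path away from the root until it gets stuck; it can only get stuck at a leaf. Hence $V(T)$ is the union of the $\ell$ root-to-leaf paths, or of $\ell-1$ such paths if the root is itself a leaf. The rest of the argument controls the lengths of these paths using the center of $T$.

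\textbf{Case $d$ even.} The center of $T$ is the midpoint $c$ of a longest path, and every vertex is within distance $d/2$ of $c$. The standard argument: if some $x$ had $\mathrm{dist}(x,c)>d/2$, then joining $x$ to the farther end of the diameter path through $c$ would give a path longer than $d$. Rooting at $c$, each of the at most $\ell$ root-to-leaf paths contributes at most $d/2$ vertices besides $c$. Therefore $v(T)\le 1+\ell d/2$.

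\textbf{Case $d$ odd.} Here the center is an edge $uv$, the middle edge of a longest path. Every vertex is within distance $(d-1)/2$ of $\{u,v\}$, by the same contradiction argument. Deleting the edge $uv$ splits $T$ into two components $T_u\ni u$ and $T_v\ni v$.
\begin{itemize}
\item Root $T_u$ at $u$. Every vertex of $T_u$ other than $u$ lies on a path from $u$ to a leaf of $T$ inside $T_u$, and such a path contributes at most $(d-1)/2$ vertices beyond $u$.
\item The same holds for $T_v$ rooted at $v$.
\item The leaves of $T$ are split between the two sides. Note that $u$ and $v$ themselves are not leaves when $d\ge 3$, and the case $d=1$, where $T=K_2$, is trivial.
\end{itemize}
Summing over both sides gives $v(T)\le 2+\ell(d-1)/2$. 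This is at most $1+\ell d/2$ exactly when $\ell\ge 2$, which always holds.

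\textbf{Main obstacle.} The only delicate step is the radius bound: that every vertex lies within $\lfloor d/2\rfloor$ of the center in the even case, and within $(d-1)/2$ of the central edge in the odd case. I would prove it by the contradiction above, taking a vertex $x$ too far from the center and concatenating its path to the center with the longer half of the diameter path. One must check that these two pieces meet only at the center, which holds because $x$ branches off the diameter path at a point no farther from the center on one side. Finally, I would note that the exact bound $v(T)\le\half\ell d$ does hold whenever $T$ is neither a path nor has $d\le 2$. In that range there is extra slack because not every leaf can be at maximum distance from the center, but I would not rely on this for the application.
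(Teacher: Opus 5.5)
The paper gives no proof of this lemma; it only cites the result and calls it a slight weakening, so there is no argument of the paper's to compare against. Your counterexamples are correct, and they show the lemma is false as printed. For $P_{d+1}$ one has $\ell=2$ and $v(T)=d+1>d=\half\ell d$, and for $K_{1,\ell}$ one has $v(T)=\ell+1>\ell$. The true statement is your additive form $v(T)\le\half\ell d+1$. The $+1$ cannot be dropped: for even $d$ it is attained by the spider with $\ell$ legs of length $d/2$.

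Your proof of the additive bound is correct. The radius bounds around the central vertex or central edge follow from your concatenation argument, provided you go to the end of the diameter path on the side opposite to where $x$ meets it. Every vertex lies on a root-to-leaf path. In the odd case, the vertices of $T_u\sm\{u\}$ have the same degree in $T_u$ as in $T$, so the leaves of $T$ really are partitioned between the two sides. The final comparison $2+\ell(d-1)/2\le 1+\half\ell d$ uses exactly $\ell\ge 2$.

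Two side claims should be fixed. First, your closing remark is false. Take the spider with three legs of length $2$. It is not a path, it has $d=4>2$, and it has $v(T)=7>6=\half\ell d$. More generally, every spider with $\ell\ge 3$ equal legs of length $d/2$ attains $1+\half\ell d$, and all its leaves are at maximum distance from the center. Since you don't use the remark, delete it.

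Second, the $+1$ is not purely a constant-factor matter downstream. In the proof of \Cref{diameter corollary}, the corrected lemma gives $d/2>(v(T)-1)/(\ep v(T))^{1/3}$. The last estimate then becomes $2k^2/d<\ep\, v(T)/(v(T)-1)$ rather than $2k^2/d\le\ep$. So that theorem's conclusion needs a small adjustment, for instance $O(n^{k+2\ep})$. \Cref{non-integral exponents} still follows, because writing $r=\lfloor r\rfloor+2\ep$ leaves room for any factor smaller than $2$.
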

	
	\begin{proof}[Proof of \Cref{diameter corollary}]
		Let $\ell$ and $d$ denote the number of leaves and the diameter of $T$.  If $\ex(n,T,\c{F})=O(n^\ell)$ then $\ex(n,T,\c{F})=\Theta(n^k)$ for some integer $k$ by \Cref{leaf stability corollary}, giving the result.  We may thus assume that this is not the case, and since $\ex(n,T,\c{F})=O(n^{(\ep v(T))^{1/3}})$, this with \Cref{leaf diameter tradeoff} implies $(\ep v(T))^{1/3}>\ell\ge 2v(T)/d$, and hence
		\begin{equation}\label{eq:d bound}
			d/2\ge v(T)/(\ep v(T))^{1/3}.
		\end{equation}
		Now let $k$ be the largest integer such that $\ex(n,T,\c{F})=\Omega(n^k)$. Since $\ex(n,T,\c{F})=O(n^{(\ep v(T))^{1/3}})$, we have
		\begin{equation}\label{equation d geq 2k}
			k\le (\ep v(T))^{1/3}\le v(T)/(\ep v(T))^{1/3}\le d/2,
		\end{equation}
		with this second inequality using the fact that $\ep\le 1$.
		
		Because we do not have $\ex(n,T,\c{F})=\Omega(n^{k+1})$, \Cref{k-1 edges for trees} implies there exists some $q$ such that every $\c{F}$-free graph is $\c{F}_{T,k+1}^q$-free and also that 
		\[
		\ex(n,T,\c{F})=O(\ex(n,\c{F})^k)=O(\ex(n,\c{F}_{T,k+1}^q)^k).
		\]
		We claim now that
		\begin{equation}
			\ex(n,\c{F}_{T,k+1}^q)^k=O(n^{k+\frac{2k^2}{d}}).\label{eq:diameter and leaves}
		\end{equation}
		Indeed, this is trivial if $k=0$.  If $k\ge 1$, then because $d\ge 2k$ by~\eqref{equation d geq 2k}, we have by \Cref{family bound given diameter} that
		\begin{equation*}
			\ex(n,\c{F}_{T,k+1}^q)^k=O(n^{k+\frac{k(k-1)}{d-k}})=O(n^{k+\frac{2k^2}{d}})
		\end{equation*}
		with this last step using $d\ge 2k$, and $k-1\le k$.  Thus \eqref{eq:diameter and leaves} always holds.
		
		By our bounds on $d$ and $k$ from \eqref{eq:d bound} and \eqref{equation d geq 2k} we have
		\[
		\frac{2k^2}{d}\le \frac{k^2 (\ep v(T))^{1/3}}{v(T)}\le \ep,
		\]
		which together with \eqref{eq:diameter and leaves} gives $\ex(n,T,\c{F})=O(n^{k+\ep})$ as desired.
	\end{proof}
	
	We now prove our final application.
	
	\begin{proof}[Proof of \Cref{non-integral exponents}]
		Given $r\in\mathbb{R}_{\geq 0}\setminus \mathbb{Z}$, we may write $r=\lfloor r\rfloor+2\epsilon$ for $0<\epsilon<1/2$. We claim that for every tree $T$ with $v(T)\ge \ep^{-1} r^3$ there exists no family $\c{F}$ with $\ex(n,T,\c{F})=\Theta(n^r)$. Indeed, assume for contradiction that such a $T,\c{F}$ existed.  Since $r\le (\ep v(T))^{1/3}$, \Cref{diameter corollary} implies that there exists some integer $k$ with $\ex(n,T,\c{F})$ lying between $\Omega(n^{k})$ and $O(n^{k+\ep})$.  But $\Theta(n^r)$ does not lie within this range for any integer $k$, a contradiction.  We conclude that only the finitely many trees $T$ with $v(T)< \ep^{-1} r^3$ can have $\ex(n,T,\c{F})=\Theta(n^r)$.
	\end{proof}

	\section{Generalizaed \texorpdfstring{Tur\'an}{Turan} Preliminaries}\label{section generalized Turan preliminaries}
	Here we establish some preliminary results needed to prove our generalized Tur\'an Theorems~\ref{k-1 edges for trees} and \ref{k-2 edges for paths}, the most important of which is our Edge Helly Theorem, \Cref{edge Helly}.

	\begin{proof}[Proof of \Cref{edge Helly}]
		Recall that we wish to prove that if $T$ is a tree, if $\c{T}$ is a collection of subtrees of $T$, and if $k\ge 0$ is an integer; then if for all subcollections $\c{T}'\sub \c{T}$ of size at most $k+1$ there exists a set of at most $k$ edges $E'\sub E(T)$ which pierces $\c{T}'$ (meaning every $T'\in \c{T}'$ has a vertex used in one of the edges of $E'$), then there exists a set of at most $k$ edges $E\sub E(T)$ which pierces $\c{T}$.
		
		Assume for contradiction that this is false for some $T,\c{T},k$, and among all such counterexamples we choose one with $v(T)$ as small as possible and subject to this $k$ as small as possible. The result is trivial for $k=0$ and if $v(T)=1$ or $v(T)=2$, so we may assume that $k\ge 1$ and $v(T)\ge 3$. Let $x$ be any leaf of $T$ and $y$ the unique neighbor of $x$. Let $T_x$ denote the (single vertex) subtree with $V(T_x)=\{x\}$. We break our argument up into two cases based on if $T_x$ is in $\c{T}$.
		
		\textbf{Case 1:} $T_x\in \c{T}$. Define $\c{T}^-_{x,y}:=\{T'\in \c{T}:x,y\notin V(T')\}$. We claim that every $\c{T}'\sub \c{T}^-_{x,y}$ of size at most $k$ can be pierced by a set of at most $k-1$ edges.  Indeed, for any such collection we have by hypothesis that $\{T_x\}\cup \c{T}'$ can be pierced by a set $E'$ of at most $k$ edges. One of these edges must necessarily be $xy$ since this is the unique edge containing $x$, and since $xy$ is disjoint from every element of $\c{T}'\sub \c{T}^-_{x,y}$, we have that $E'\sm \{xy\}$ is a set of at most $k-1$ edges piercing $\c{T}'$.  This proves the claim, and because of our choice of minimal counterexample, this claim implies that $\c{T}^-_{x,y}$ can be pierced by some set $E$ of at most $k-1$ edges.  It is easy to check then that $E\cup \{xy\}$ is a set of at most $k$ edges which pierces $\c{T}$, proving the result.
		
		\textbf{Case 2:} $T_x \notin \c{T}$. In this case we define a new collection $\c{T}_{-x}=\{T'-x:T'\in \c{T}\}$ which we view as a subtree system of $T-x$.  We claim that every $\c{T}'\sub \c{T}_{-x}$ of size at most $k+1$ can be pierced by a set of at most $k$ edges of $T-x$. Indeed, if say $\c{T}'=\{T_1-x,\ldots,T_\ell-x\}$ for some $\ell\le k+1$, then by hypothesis we know there exists a set $E'$ of at most $k$ edges of $T$ which pierce $\{T_1,\ldots,T_\ell\}$.  If $E'\sub T-x$ then $E'$ will also pierce $\c{T}'$, so the only possible issue is if $xy\in E'$.  In this case if we let $x'$ denote any neighbor of $y$ in $T$ other than $x$ (which exists since $y$ is adjacent to a leaf and $v(T)\ge 2$), then the edge set $(E'\cup \{x'y\})\sm \{xy\}$ will pierce $\c{T}'$ since none of these trees contain $x$ by assumption, proving the claim.
		
		Since we are working with a minimal counterexample, we conclude that there exists some set $E$ of at most $k$ edges of $T-x$ which pierces all of $\c{T}_{-x}$.  Because $T_x=\{x\}\notin \c{T}$ these edges $E$ will pierce $\c{T}$ as well.
	\end{proof}

	Our remaining preliminaries will be written in terms of general graphs $H$, though we will only apply these in the case when $H$ is a tree.  We establish some results around generalized Tur\'an numbers, beginning with an observation which motivates the technical definition of $\c{F}_{H,k}^q$ and which can be viewed as a weak version of our main result \Cref{k-1 edges for trees}.
	
	\begin{proposition}\label{proposition:keyObservation}
		For every graph $H$, integer $k\ge 0$, and family of graphs $\c{F}$, we either have $\ex(n,H,\c{F})=\Om_{H,k}(n^{k+1})$ or there exists an integer $q$ such that every $\c{F}$-free graph is also $\c{F}_{H,k+1}^q$-free.  In particular, either $\ex(n,H,\c{F})=\Om_{H,k}(n^{k+1})$ or $\ex(n,H,\c{F}) \leq \ex(n,H,\c{F}_{H,k+1}^\pow)$.
	\end{proposition}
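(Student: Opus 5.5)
The natural approach is to prove the contrapositive: assume that for every integer $\pow$ some $\c{F}$-free graph contains a member of $\c{F}_{H,k+1}^\pow$, and deduce $\ex(n,H,\c{F})=\Om_{H,k}(n^{k+1})$. The second sentence of the statement then follows at once. If such a $\pow$ exists, every $\c{F}$-free graph is $\c{F}_{H,k+1}^\pow$-free. So the $n$-vertex $\c{F}$-free graphs form a subset of the $n$-vertex $\c{F}_{H,k+1}^\pow$-free graphs, which gives $\ex(n,H,\c{F})\le \ex(n,H,\c{F}_{H,k+1}^\pow)$.

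For the main direction, fix $n$ and choose $\pow:=\lfloor n/v(H)\rfloor$. By assumption there is an $\c{F}$-free graph $G$ containing some $H_R^\pow$, where $R\subsetneq V(H)$ and $H-R$ has at least $k+1$ components. Let $G'$ be this copy of $H_R^\pow$. It is a subgraph of an $\c{F}$-free graph, so it is itself $\c{F}$-free, since being $\c{F}$-free is closed under taking subgraphs. Its vertex count is
\[
v(G')=|R|+\pow(v(H)-|R|)\le \pow v(H)\le n,
\]
and we pad $G'$ with isolated vertices to reach exactly $n$ vertices. Isolated vertices cannot create a copy of any member of $\c{F}$, because we assume every graph in $\c{F}$ has an edge. (Strictly, a member of $\c{F}$ with isolated vertices could appear using the padding; if $\c{F}$ contains such a graph, we can replace it by the graph with its isolated vertices removed, which only changes finitely many small cases. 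Alternatively, we just observe that the relevant lower bound concerns $H$-copies and take $G'$ as a graph on at most $n$ vertices.)

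Next we count copies of $H$ in $G'$. Let $C_1,\dots,C_{k+1}$ be $k+1$ distinct components of $H-R$. For each $i$, pick an index $j_i\in[\pow]$ and use the $j_i$-th copy of $C_i$. For all remaining components, and for $R$, use the copies from a fixed copy of $H$, say the first. Every edge of $H$ either lies inside $R$, inside a single component, or joins a component to $R$, and no edge joins two different components. Hence this mixed choice is still a copy of $H$ in $G'$. Different choices of $(j_1,\dots,j_{k+1})$ give different vertex sets, because each $C_i$ is nonempty. This produces at least $\pow^{k+1}$ distinct copies, or at least $\pow^{k+1}/|\mathrm{Aut}(H)|$ if copies are counted as subgraphs. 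Since $\pow=\Theta_H(n)$, this is $\Om_{H,k}(n^{k+1})$ for all large $n$.

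The only delicate point is the independence of the component choices: we must verify that swapping components between copies preserves every edge, which rests precisely on the fact that the components of $H-R$ attach only to $R$. Beyond that, we should handle the bookkeeping of choosing $\pow$ as a function of $n$, noting that for a given $\pow$ the graph $H_R^\pow$ fits within $n$ vertices. The requirement $R\ne V(H)$ is what guarantees nonempty components, so the mixed copies are genuinely distinct.
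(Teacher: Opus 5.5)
Your proof is correct and is essentially the paper's argument. The paper also takes $G=H_R^{\lfloor n/v(H)\rfloor}$ on at most $n$ vertices and counts $\lfloor n/v(H)\rfloor^{k+1}$ copies by independently choosing which petal each of $k+1$ components of $H-R$ uses. The only cosmetic difference is the framing: the paper splits into cases according to whether some single $R$ has $H_R^{\pow}$ $\c{F}$-free for every $\pow$ (otherwise taking $\pow=\max_R \pow(R)$), whereas you negate the existence of $\pow$ directly and let $R$ depend on $n$, which is equally valid since the count $\pow^{k+1}$ does not depend on $R$.
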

	\begin{proof}
		Let $H$, $k \geq 0$, and $\c{F}$ be given. For $R \subseteq V(H)$, let $w(R)$ denote the number of connected components of $H - R$. We consider two cases.
		
		\textbf{Case 1:} for every $R \subsetneq V(H)$ with $w(R) \geq k+1$, there exists some integer $\pow(R)$ such that the family $\c{F}$ contains a graph which is a subgraph of $H_R^{\pow(R)}$.  Letting $\pow:=\max_R \pow(R)$, this means that any $\c{F}$-free graph is also $\c{F}_{H,k+1}^\pow$-free as desired.
		
		\textbf{Case 2:} there exists $R \subsetneq V(H)$ with $w(R) \geq k+1$ such that $\c{F}$ does not contain any subgraph of $H_R^\pow$ for any $\pow$. Let $n' = \lfloor{n/|V(H)|}\rfloor$ and consider $G = H_R^{n'}$.  This graph has at most $n$ vertices and is $\c{F}$-free by hypothesis.  Moreover, $H_R^{n'}$ contains at least $(n')^{k+1}=\Om_{H,k}(n^{k+1})$ copies of $H$, namely by taking the copies which use $R$ and for each of the at least $k+1$ components of $H-R$ chooses one of the $n'$ copies making up $H_R^{n'}$ to embed into.
		This implies $\ex(n,H,\c{F}) = \Om(n^{k+1})$, proving the result. 
	\end{proof}
	
	To prove our main results, it will be convenient to shift from counting copies of $H$ in $G$ and instead count \emph{monomorphisms} of $H$ into $G$.
	
	\begin{definition}
		Given graphs $H$ and $G$, we say that a map  $\phi:V(H)\to V(G)$ is a \textbf{monomorphism} of $H$ into $G$ if $\phi$ is both injective and a homomorphism.  We let $\Mon(H,G)$ denote the set of monomorphisms of $H$ into $G$ and we let $\mon(H,G):=|\Mon(H,G)|$. Given a monomorphism $\phi\in \Mon(H,G)$ the \textbf{graph-image} of $\phi$ is the copy $H^*$ of $H$ in $G$ with vertex set $V(H^*)=\phi(V(H))$ and such that $xy\in E(H)$ if and only if $\phi(x)\phi(y)\in E(H^*)$.
	\end{definition}
	
	The following shows that counting monomorphisms is equivalent to counting copies.
	
	\begin{observation}\label{observation monomorphisms same as copies}
		For every pair of graphs $H$ and $G$, the number of copies of $H$ in $G$ equals
		\[
		\frac{\mon(H,G)}{\operatorname{aut}(H)},
		\]
		where $\operatorname{aut}(H)$ denotes the size of the automorphism group of $H$.
	\end{observation}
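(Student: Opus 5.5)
The plan is to set up an explicit correspondence between monomorphisms of $H$ into $G$ and copies of $H$ in $G$, and then count its fibers. By a ``copy of $H$ in $G$'' we mean a subgraph $H^*\sub G$ isomorphic to $H$. Every monomorphism $\phi\in\Mon(H,G)$ has a graph-image $H^*$, with vertex set $\phi(V(H))$ and edge set $\{\phi(x)\phi(y): xy\in E(H)\}$. This $H^*$ is a subgraph of $G$ because $\phi$ is a homomorphism, and it is isomorphic to $H$ because $\phi$ is injective, so $\phi$ is a bijection from $V(H)$ onto $V(H^*)$ that maps edges onto edges. We therefore obtain a map $\Psi:\Mon(H,G)\to\{\text{copies of }H\text{ in }G\}$ sending $\phi$ to its graph-image. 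The map $\Psi$ is surjective: any copy $H^*$ comes with an isomorphism $\psi:H\to H^*$, and composing $\psi$ with the inclusion $H^*\hookrightarrow G$ gives a monomorphism whose graph-image is exactly $H^*$.

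Next we show that every fiber $\Psi^{-1}(H^*)$ has size exactly $\operatorname{aut}(H)$. Fix one $\psi\in\Psi^{-1}(H^*)$. For each $\sigma\in\mathrm{Aut}(H)$, the composition $\psi\circ\sigma$ is again a monomorphism, and its graph-image is still $H^*$, since $\sigma$ permutes $V(H)$ and $E(H)$. Conversely, suppose $\phi\in\Psi^{-1}(H^*)$. Both $\phi$ and $\psi$ are bijections $V(H)\to V(H^*)$ that carry $E(H)$ onto $E(H^*)$, so $\sigma:=\psi^{-1}\circ\phi$ is a bijection of $V(H)$ preserving edges in both directions. Hence $\sigma\in\mathrm{Aut}(H)$ and $\phi=\psi\circ\sigma$. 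Finally, the assignment $\sigma\mapsto\psi\circ\sigma$ is injective because $\psi$ is injective. Together these show it is a bijection from $\mathrm{Aut}(H)$ onto the fiber.

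Summing over copies then gives $\mon(H,G)=\operatorname{aut}(H)\cdot\#\{\text{copies}\}$, which is the claimed identity. The only point that needs care is the converse direction of the fiber count. There one must use that the edge set of the graph-image is exactly the image of $E(H)$, and not merely a superset of it. This is what makes $\psi^{-1}\circ\phi$ preserve non-edges as well as edges, so that it is a genuine automorphism. I expect no further obstacle, since the statement is a standard orbit-counting fact.
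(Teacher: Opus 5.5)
Your proof is correct and takes the same approach as the paper, which only remarks that each copy of $H$ in $G$ arises from exactly $\operatorname{aut}(H)$ monomorphisms. You have made that fiber count explicit, including the point that a copy is determined by its edge set and not only its vertex set.
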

	This follows from the fact that a copy of $H$ in $G$ is a subgraph of $G$ which is isomorphic to $H$, each such copy giving rise to exactly $\mathrm{aut}(H)$ monomorphisms.  Because of this observation, we will often bound the number of copies of a graph $H$ by instead bounding $\mon(H,G)$. 
	
	The following concept will be useful to work with, where here the set of maps $\Phi$ we use will always be collections of monomorphisms.
	
	\begin{definition}
		We say that a set of maps $\Phi$ with the same domain $S$ is \textbf{distinguishing} if for all distinct $x,y\in S$, we have $\phi(x)\ne \phi'(y)$ for all $\phi,\phi'\in \Phi$.  Equivalently $\{\phi(x):\phi\in \Phi\}\cap \{\phi(y):\phi\in \Phi\}=\emptyset$.
	\end{definition}
	Note that if $\Phi$ is distinguishing then so is any subset $\Phi'\sub \Phi$ as well as its set of restrictions $\{\phi|_{S'}:\phi\in \Phi\}$ for any $S'\sub S$.  The main facts we'll need about these are the following.
	\begin{observation}\label{distinguishing facts}
		Let $H,G$ be graphs and $\Phi\sub \Mon(H,G)$ a distinguishing set of monomorphisms.
		\begin{enumerate}[label=(\alph*)]
			\item For all $X\sub V(H)$, if $\phi,\phi'\in \Phi$ satisfy $\phi(X)=\phi'(X)$, then $\phi(\hat{x})=\phi'(\hat{x})$ for all $\hat{x}\in X$.\label{item distinguishing sets}
			\item There exists a partition $\bigcup_{\hat{x}\in V(T)} V_{\hat{x}}$ of $V(G)$ such that $\{\phi(\hat{x}):\phi\in \Phi\}\sub V_{\hat{x}}$ for all $\hat{x}\in V(H)$.\label{item distinguishing partition}
		\end{enumerate}
	\end{observation}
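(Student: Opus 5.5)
Both parts come straight from the definition of distinguishing. The only idea needed is that the image sets
\[A_{\hat{x}}:=\{\phi(\hat{x}):\phi\in \Phi\},\qquad \hat{x}\in V(H),\]
are pairwise disjoint. Indeed, $\Phi$ being distinguishing says exactly that $A_{\hat{x}}\cap A_{\hat{y}}=\emptyset$ whenever $\hat{x}\ne\hat{y}$.

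For part \ref{item distinguishing sets}, I would fix $\hat{x}\in X$. Since $\phi(\hat{x})\in\phi(X)=\phi'(X)$, there is some $\hat{y}\in X$ with $\phi(\hat{x})=\phi'(\hat{y})$. If $\hat{y}\ne\hat{x}$, this would contradict the distinguishing property applied to the pair $\hat{x},\hat{y}$ and the maps $\phi,\phi'$. Hence $\hat{y}=\hat{x}$, so $\phi(\hat{x})=\phi'(\hat{x})$. Injectivity of the monomorphisms is not even needed here.

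For part \ref{item distinguishing partition}, where $V(T)$ should be read as $V(H)$, I would take $V_{\hat{x}}:=A_{\hat{x}}$ for all but one vertex. I fix an arbitrary $\hat{x}_0\in V(H)$ and set
\[V_{\hat{x}_0}:=A_{\hat{x}_0}\cup\Bigl(V(G)\sm \bigcup_{\hat{x}\in V(H)}A_{\hat{x}}\Bigr).\]
The sets $A_{\hat{x}}$ are pairwise disjoint, and the leftover set is disjoint from all of them. So the $V_{\hat{x}}$ are pairwise disjoint and cover $V(G)$, and by construction $\{\phi(\hat{x}):\phi\in\Phi\}=A_{\hat{x}}\sub V_{\hat{x}}$. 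Some parts may be empty (for instance if $\Phi=\emptyset$), which I take to be allowed in this context.

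There is no real obstacle. The only points needing care are the typo $V(T)$ for $V(H)$ and the convention allowing empty parts in the partition.
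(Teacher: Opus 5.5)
Your proposal is correct and follows essentially the same route as the paper. Part (a) is the identical argument, and for part (b) the paper only remarks that the image sets are pairwise disjoint, so some partition exists; your explicit construction, which puts the leftover vertices into $V_{\hat{x}_0}$, makes that remark precise.
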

	
	\begin{proof}
		For \ref{item distinguishing sets}, given $\hat{x}\in X$, since $\phi(X)=\phi'(X)$, we must have $\phi(\hat{x})=\phi'(\hat{y})$ for some $\hat{y}\in X$, but then since $\Phi$ is distingushing, we have $\hat{x}=\hat{y}$, so $\phi(\hat{x})=\phi'(\hat{x})$.
		
		For \ref{item distinguishing partition}, the sets $\{\phi(\hat{x}):\phi\in \Phi\}$ are disjoint for distinct $\hat{x}$ since $\Phi$ is distinguishing and as such one can partition $V(G)$ in any way that preserves $\{\phi(\hat{x}):\phi\in \Phi\}\sub V_{\hat{x}}$ for all $\hat{x}\in V(H)$.
	\end{proof}
	
	We also have the following useful result.
	
	\begin{lemma}\label{distinguishing large set of maps}
		For any graphs $H,G$, there exists a set of distinguishing monomorphisms $\Phi\sub \Mon(H,G)$ with $|\Phi|\ge v(H)^{-v(H)}\mon(H,G)$.
	\end{lemma}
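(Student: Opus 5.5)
We will prove \Cref{distinguishing large set of maps} by a standard random partition (colour-coding) argument. Write $h:=v(H)$ and fix a bijection between $V(H)$ and $[h]$. Choose a uniformly random map $c:V(G)\to V(H)$, i.e.\ colour each vertex of $G$ independently and uniformly with one of the $h$ vertices of $H$. Call a monomorphism $\phi\in\Mon(H,G)$ \emph{colour-respecting} if $c(\phi(\hat{x}))=\hat{x}$ for every $\hat{x}\in V(H)$, and let $\Phi_c$ be the set of colour-respecting monomorphisms.

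We will check two properties of $\Phi_c$.

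\textbf{$\Phi_c$ is distinguishing for every choice of $c$.} Suppose $\phi,\phi'\in\Phi_c$ and $\hat{x}\ne\hat{y}$ satisfy $\phi(\hat{x})=\phi'(\hat{y})=:v$. Then $\hat{x}=c(v)=\hat{y}$, which is a contradiction.

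\textbf{$\Phi_c$ is large in expectation.} Fix $\phi\in\Mon(H,G)$. Since $\phi$ is injective, the $h$ vertices $\phi(\hat{x})$ are distinct. Their colours are therefore independent, and the probability that each receives its prescribed colour is exactly $h^{-h}$. By linearity of expectation,
\[
\E|\Phi_c| = h^{-h}\mon(H,G).
\]
Hence some choice of $c$ achieves $|\Phi_c|\ge h^{-h}\mon(H,G)$, and that $\Phi:=\Phi_c$ is the desired set.

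There is no real obstacle in this argument. The only point to be careful about is using injectivity of monomorphisms, which is what makes the colour events independent and gives the exact probability $h^{-h}$. If one prefers a deterministic argument, the same bound follows by averaging the count $\sum_c|\Phi_c|=h^{\,v(G)-h}\mon(H,G)$ over all $h^{v(G)}$ colourings.
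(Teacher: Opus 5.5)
Your proposal is correct and is the same random-partition argument the paper uses: colour $V(G)$ by $V(H)$, keep the colour-respecting monomorphisms, and apply linearity of expectation to get $v(H)^{-v(H)}\mon(H,G)$. You also spell out two points the paper leaves implicit, namely that the resulting set is automatically distinguishing and that injectivity of monomorphisms is what gives the exact probability.
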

	
	\begin{proof}
		Randomly partition $V(G)$ into sets $\{V_{\hat{x}}:\hat{x}\in V(T)\}$ by including each vertex in $V(G)$ into each of these sets uniformly and independently at random.  Let $\Phi\sub \Mon(H,G)$ be the collection of $\phi\in \Mon(H,G)$ with the property that $\phi(\hat{x})\in V_{\hat{x}}$ for all $\hat{x}\in V(H)$.  Because $\Pr[\phi\in \Phi]=v(H)^{-v(H)}$, we have $\E[|\Phi|]=v(H)^{-v(H)}\mon(H,G)$ by linearity of expectation, and in particular there exists some choice of $V_{\hat{x}}$ sets such that $|\Phi|\ge v(H)^{-v(H)}\mon(H,G)$.
	\end{proof}

	Finally, we need the following.

	\begin{lemma}\label{lemma:sunflower}
		For every graph $H$ and integer $\pow \ge 2$, there exists some integer $\Pow=\Pow(H,q)$ with $\Pow\ge \pow$ such that if $G$ is a graph and if $\Phi\subseteq \mathrm{Mon}(H,G)$ satisfies $|\Phi|\ge \Pow$, then there exists a set $R\subsetneq V(H)$ and distinct maps $\phi_1,\ldots,\phi_\pow\in \Phi$ such that for all $i\ne j$, we have $\phi_i(\hat{x})=\phi_j(\hat{y})$ if and only if $\hat{x}=\hat{y}$ and $\hat{x}\in R$.
	\end{lemma}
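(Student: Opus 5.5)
The plan is to reduce to the Erdős–Rado sunflower lemma applied to the image sets $\phi(V(H))$. A sunflower pattern among these sets alone does not fix where the shared vertices sit inside $H$, so I will then use pigeonhole to make the common vertices have the same preimages in every chosen map. Write $h:=v(H)$ throughout.

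First I will pass to maps with distinct image sets. For a fixed $h$-set $S\sub V(G)$, at most $h!$ monomorphisms $\phi$ satisfy $\phi(V(H))=S$, since $\phi$ is a bijection from $V(H)$ onto $S$. So if $|\Phi|\ge h!\cdot M$, there are $M$ maps in $\Phi$ whose image sets are pairwise distinct. Set $p:=q\cdot h^h$. If $M>h!(p-1)^h$, the sunflower lemma gives $p$ of these maps, say $\phi_1,\ldots,\phi_p$, and a set $K\sub V(G)$ with $\phi_i(V(H))\cap\phi_j(V(H))=K$ for all $i\ne j$. Because the image sets are distinct and $p\ge 2$, each has size $h$ and strictly contains $K$, so $|K|<h$. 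Taking
\[\Pow:=\max\{\pow,\ h!\,(h!\,(p-1)^h+1)\}\]
makes all of this work and also gives $\Pow\ge\pow$.

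Next comes the alignment step, which is the only real subtlety: the sunflower lemma controls the sets but not the positions within $H$. Each $\phi_i$ restricted to $\phi_i^{-1}(K)$ is a bijection onto $K$, so it determines an injection $\psi_i:=\phi_i^{-1}|_K:K\to V(H)$. There are at most $h^{|K|}\le h^h$ such injections. Since $p=q\cdot h^h$, pigeonhole gives $q$ indices, relabelled $1,\ldots,q$, with a common injection $\psi$. Set $R:=\psi(K)$; then $|R|=|K|<h$, so $R\subsetneq V(H)$.

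Finally I verify the required property for $i\ne j$. If $\hat{x}=\hat{y}\in R$, then $\hat{x}=\psi(v)$ for some $v\in K$, so $\phi_i(\hat{x})=v=\phi_j(\hat{x})$. Conversely, suppose $\phi_i(\hat{x})=\phi_j(\hat{y})=v$. Then $v\in\phi_i(V(H))\cap\phi_j(V(H))=K$, so $\hat{x}=\psi_i(v)=\psi(v)=\psi_j(v)=\hat{y}$, and this vertex lies in $R$. The maps $\phi_1,\ldots,\phi_q$ are distinct because their image sets are distinct, which completes the argument.
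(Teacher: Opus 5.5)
Your proof is correct and follows the paper's argument essentially step for step: pass to maps with pairwise distinct image sets (losing a factor of $v(H)!$), apply the Erd\H{o}s--Rado sunflower lemma to these images, and pigeonhole on how the kernel $K$ sits inside $H$ to get $q$ maps sharing a common $R$. The differences are cosmetic. You pigeonhole over at most $h^h$ injections $K\to V(H)$ instead of $v(H)!$ tuples, and you deduce $R\subsetneq V(H)$ from $|K|<h$ rather than from $\phi_1\neq\phi_2$ agreeing on $R$. Your threshold ``more than $h!(p-1)^h$ sets'' is also the standard form of Erd\H{o}s--Rado, whereas the paper's displayed bound swaps the roles of uniformity and petal count; this is harmless there, since only finiteness is used.
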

	In other words, the maps $\phi_1,\ldots,\phi_q$ are such that the union of their graph-images is a copy of $H_R^\pow$ in $G$.
	
	\begin{proof}
		This result is a consequence of the Erd\H{o}s-Rado sunflower lemma~\cite{ER1960}, which states\footnote{We note in passing that it is a major open problem to improve upon the bounds of the Erd\H{o}s-Rado sunflower lemma, though we will only need that some finite bound exists.  For a more thorough treatment of this result and best known bounds we refer the reader to~\cite{ALWZ2021}.} that if $\c{H}$ is an $r$-uniform hypergraph with $|E(\c{H})|\ge k!(r-1)^k$, then there exist edges $e_1,\ldots,e_k\in E(\c{H})$ and a set $K\sub V(\c{H})$ such that $e_i\cap e_j=K$ for every $i\ne j$. We will ultimately apply this result with $r:=v(H)$ and $k:=q\cdot v(H)!$, with us in the end proving the lemma for the value $\Pow(H,G):=v(H)!\cdot k!(r-1)^k$.
		
		Given a collection of at least $\Pow(H,G)$ monomorphisms $\Phi\subseteq \mathrm{Mon}(H,G)$, we first take a subcollection $\Phi'\subseteq \Phi$ such that $\phi(V(H))\neq \phi'(V(H))$ for all $\phi,\phi'\in \Phi'$. Since at most $v(H)!$ monomorphisms can have the same image, we can find such a collection with $|\Phi'|\geq |\Phi|/v(H)!\geq k!(r-1)^k$.
		
		Consider the $r$-uniform hypergraph $\mathcal{H}$ with $V(\mathcal{H})=V(G)$ and
		\[
		E(\mathcal{H})=\{\phi(V(H))\mid \phi\in \Phi'\}.
		\]
		Note that $|E(\mathcal{H})|=|\Phi'|$ by the defining property of $\Phi'$. By the sunflower lemma, there are $k$ edges of $\mathcal{H}$, say $\phi_1(V(H)),\dots,\phi_k(V(H))$, and a set $K\subseteq V(G)$ such that $\phi_i(V(H))\cap \phi_j(V(H))=K$ for all $i\neq j$. Let $K=\{v_1,v_2,\dots,v_t\}$, and associate to each $\phi_i$ the tuple $(\phi_i^{-1}(v_1),\phi_i^{-1}(v_2),\dots,\phi_i^{-1}(v_t))$. Since the $\phi_i$'s are injective and $t\leq v(H)$, there are at most $v(H)!$ possible tuples, so by pigeonhole there must exist a collection of at least $k/v(H)!=q$ of the $\phi_i$'s which correspond to the same tuple.  Without loss of generality we can assume that $\phi_1,\dots,\phi_q$ have this property. Let $R:=\phi_1^{-1}(K)$ . This gives for $i,j\in [q]$ with $i\neq j$ that $\phi_i(\hat{x})=\phi_j(\hat{y})$ if and only if $\hat{x}=\hat{y}$ and $\hat{x}\in R$. Furthermore, $R\neq V(H)$ since $\phi_1\neq \phi_2$, but $\phi_1$ and $\phi_2$ agree on $R$. Thus, $R$ along with $\phi_1,\ldots,\phi_q$ satisfy the conditions of the lemma.
	\end{proof}
	
	\section{Proof of \texorpdfstring{Theorem~\ref{k-1 edges for trees}}{Theorem 1.3}}\label{section main theorem proof}
	
	Our main goal for this section will be to prove the following technical result.
	
	\begin{theorem}\label{theorem:treeBoundEG}
		If $T\ne K_1$ is a tree and if $k\geq 0$ and $\pow\ge 2$ are integers, then any $n$-vertex graph $G$ which is $\c{F}_{T,k+1}^\pow$-free contains at most $O(e(G)^{k})$ copies of $T$. 
	\end{theorem}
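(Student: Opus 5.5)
The plan is to count monomorphisms rather than copies, using \Cref{observation monomorphisms same as copies}. By \Cref{distinguishing large set of maps} we may restrict to a distinguishing set $\Phi\sub\Mon(T,G)$ of size at least $v(T)^{-v(T)}\mon(T,G)$. Each $\phi\in\Phi$ will be encoded by a set $E_\phi\sub E(T)$ with $|E_\phi|\le k$, together with the images $\phi(E_\phi)$ (ordered edges of $G$) and a bounded amount of extra information. Then
\[|\Phi|\le 2^{e(T)}\,(2e(G))^k\cdot C(T,k,q).\]
The encoding rests on one claim: for each $\phi$ there is an $E_\phi$ with $|E_\phi|\le k$ such that at most $C$ maps $\phi'\in\Phi$ agree with $\phi$ on $V(E_\phi)$. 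Since $T\ne K_1$, the case $k=0$ reduces to showing $|\Phi|=O(1)$; there the sunflower argument below gives a forbidden $T_R^q$ with $T-R$ having at least one component.

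To prove the claim I use a heaviness hierarchy driven by \Cref{lemma:sunflower}.
\begin{enumerate}
\item Fix thresholds $C_0\ll C_1\ll\dots\ll C_{v(T)}$, each exceeding $\Pow(T,q)$ applied to the previous one.
\item For $X\sub V(T)$, call $X$ \emph{heavy} for $\phi$ if at least $C_{|X|}$ maps of $\Phi$ agree with $\phi$ on $X$.
\item If $X$ is heavy, \Cref{lemma:sunflower} applied to these maps yields $q$ maps forming a copy of $T_R^q$. Because $\Phi$ is distinguishing, \Cref{distinguishing facts} shows the common set $R$ satisfies $X\sub R\subsetneq V(T)$.
\item Since $G$ is $\c{F}_{T,k+1}^q$-free, $T-R$ must have at most $k$ components.
\item Using the threshold gaps, I would choose $R$ maximal among heavy sets, so that agreement on $R$ together with one more vertex is light. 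The components of $T-R$ then form a subtree system of size at most $k$ attached to $\phi$ and $X$.
\end{enumerate}

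Next I build a subtree system $\c{T}_\phi$ and pierce it with \Cref{edge Helly}. Starting from $X=\emptyset$ (all of $\Phi$ agrees on it, so it is heavy once $|\Phi|$ is large), let $\c{T}_\phi$ be the collection of components of $T-R$ over all heavy $X$ and the corresponding maximal $R$. The intended meaning is this: once $\phi$ is known on a vertex of such a component together with an adjacent edge, the remaining freedom inside that component is bounded by a light threshold. We must check the Helly hypothesis, namely that any $k+1$ members of $\c{T}_\phi$ can be pierced by $k$ edges. For this I would argue by contradiction: $k+1$ members with no $k$-edge piercing should produce a single set $R'$ for which $T-R'$ has at least $k+1$ components and agreement on $R'$ is still heavy. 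Sunflower would then give a forbidden $T_{R'}^q$. \Cref{edge Helly} then supplies $E_\phi$ with $|E_\phi|\le k$ piercing $\c{T}_\phi$.

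Finally, I bound the extensions from $\phi|_{V(E_\phi)}$. Walk along $T$ outward from $V(E_\phi)$: each newly revealed vertex lies in a component that has been pierced, so the number of choices at that step is below a light threshold. Multiplying over at most $v(T)$ steps gives the constant $C$.

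I expect the main obstacle to be the Helly-hypothesis step: converting $k+1$ members that cannot be pierced by $k$ edges into one heavy $R'$ with $k+1$ components. My first attempt would be to take $R'$ to be the intersection of the relevant maximal $R$'s, taking the threshold hierarchy large enough that the intersection of heavy sets stays heavy along a chain. This will need careful bookkeeping of the order in which sets are declared heavy.
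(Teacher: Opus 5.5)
Your architecture matches the paper's: distinguishing monomorphisms, an encoding of each $\phi$ by at most $k$ edges of $T$ obtained from \Cref{edge Helly}, and the sunflower lemma to turn a failure of piercing into a forbidden $T_R^q$. Your strengthened claim, that few $\phi'$ agree with $\phi$ on $V(E_\phi)$, is in fact true. But the step you flag as the main obstacle is a genuine gap, and it fails because your notion of heaviness is global.

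Suppose many maps of $\Phi$ agree with $\phi$ on a set $R'$ for which $T-R'$ has $k+1$ components. You cannot conclude that $G\supseteq T_{R'}^q$. Applying \Cref{lemma:sunflower} to those maps returns some kernel $R''\supseteq R'$, possibly strictly larger, and $T-R''$ may have at most $k$ components. Heaviness of $R'$ only forces \emph{some} component of $T-R'$ to admit many variations, not $k+1$ of them at once.

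Intersecting kernels does not repair this. The petals obtained for a heavy $X$ agree with $\phi$ on $X$, but their common values on $R\setminus X$ need not be $\phi$'s. So petals arising from different heavy sets cannot be merged into one sunflower, and the components you place in $\mathcal{T}_\phi$ are not actually tied to $\phi$. Separately, with $C_0\ll C_1\ll\cdots$ and $X$ heavy when at least $C_{|X|}$ maps agree, maximality of a heavy $X$ only says $X\cup\{v\}$ falls below the \emph{larger} threshold $C_{|X|+1}$, so there is no gap to exploit.

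The missing idea is to make extendability local and then splice. Put a subtree $T'$ into $\mathcal{T}_\phi$ when there are at least $C$ distinct restrictions to $V(T')$ of maps in $\Phi$ agreeing with $\phi$ on $N(T')$, the set of vertices outside $T'$ adjacent to it. In the paper's notation this is $|\Psi(T',\phi(N(T'));\Phi)|\ge C$.

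If $\mathcal{T}_\phi$ cannot be pierced by $k$ edges, then \Cref{lemma edge helly lemma} (an easy consequence of \Cref{edge Helly}) gives $k+1$ members that are pairwise non-touching: $(V(T'_i)\cup N(T'_i))\cap V(T'_j)=\emptyset$ for $i\ne j$. Apply the sunflower lemma separately inside each $T'_i$ to get petals $\psi_i^{(1)},\dots,\psi_i^{(q)}$ with kernel $R_i\subsetneq V(T'_i)$. Let $\phi^{(p)}$ equal $\phi$ on $V_0:=V(T)\setminus\bigcup_i V(T'_i)$ and $\psi_i^{(p)}$ on $V(T'_i)$.

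Every $N(T'_i)$ lies in $V_0$ and $\Phi$ is distinguishing, so each $\phi^{(p)}$ is a monomorphism. Together these maps form $T_R^q$ with $R=V_0\cup\bigcup_i R_i$, and $T-R$ has a component inside each $T'_i$, a contradiction.

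Your final counting step must also be reversed, because being pierced says nothing about a subtree's freedom. Instead, every component $K$ of $T-V(E_\phi)$ is \emph{not} pierced, hence not in $\mathcal{T}_\phi$, hence has fewer than $C$ extensions given $\phi(N(K))$. Since $N(K)\subseteq V(E_\phi)$, at most $C^{v(T)}$ maps of $\Phi$ agree with $\phi$ on $V(E_\phi)$, which is exactly your claim. This is slightly more direct than the paper's \Cref{piercing equals few copies}, which is set up to handle pseudo-piercing.
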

	
	Our motivation for this is that it quickly implies \Cref{k-1 edges for trees}.
	
	\begin{proof}[Proof of \Cref{k-1 edges for trees}]
		If there does not exist a value of $q$ such that every $\c{F}$-free graph is $\c{F}_{T,k+1}^q$-free, then \Cref{proposition:keyObservation} implies $\ex(n,T,\c{F})=\Om_{T,k}(n^{k+1})$ as desired.
		
		Now let us assume there is such a $q$, and assume for the moment that $q\geq 2$. By taking $G$ to be an $\c{F}$-free graph with $\ex(n,T,\c{F})$ copies of $T$, we note that $G$ is $\mathcal{F}_{T,k+1}^q$-free as well by the defining property of $q$, so \Cref{theorem:treeBoundEG} gives
		\[
		\ex(n,T,\c{F})=O(e(G)^{k})=O(\ex(n,\c{F})^{k}),
		\]
		The only remaining case is $\pow=1$, which follows from the case $\pow=2$ and monotonicity.
	\end{proof}

	\textbf{Proof Intuition for \Cref{theorem:treeBoundEG}}.  The statement of \Cref{theorem:treeBoundEG} suggests how it might be proved, namely by showing that in any $\c{F}_{T,k+1}^\pow$-free graph $G$, every copy of $T$ contains a set of $k$ edges $E$ with the property that $E$ is contained in at most $O(1)$ copies of $T$ in $G$. 
	\begin{figure}
		\centering
		\begin{tikzpicture}[scale=1.2]
			
			\node at (1,0) { $\bullet$};
			\node at (2,0) { $\bullet$};
			\node at (3,0) { $\bullet$};
			\node at (4,0) { $\bullet$};
			\node at (5,0) { $\bullet$};
			\node (u5) at (5,.5) { $\bullet$};
			\node (w5) at (5,-.5) { $\bullet$};
			\node (w1) at (1,-.8) { $\bullet$};
			\node (u1) at (1,.8) { $\bullet$};
			\node (w2) at (2,-.4) { $\bullet$};
			\node (u2) at (2,.4) { $\bullet$};
			\draw (3,0)--(2,0);
			\draw (4,0)--(5,0);
			\draw (1,0)--(2,0);
			\draw (3,0)--(4,0);
			\draw (4,0)--(5,-.5);
			\draw (4,0)--(5,.5);
			\draw (3,0)--(2,.4);
			\draw (3,0)--(2,-.4);
			\draw (2,.4)--(1,.8);
			\draw (2,-.4)--(1,-.8);
		\end{tikzpicture}
		\caption{A depiction of $(P_5)_{\{\hat{x}_3,\hat{x}_4\}}^3$.  Observe that each copy of $P_5$ in this graph can be identified by specifying which edges plays the role of $\hat{x}_1\hat{x}_2$ and $\hat{x}_4\hat{x}_5$.}
		\label{figure P5}
	\end{figure}
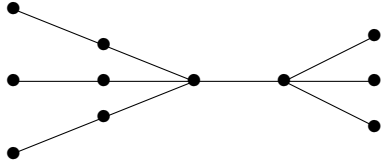
	For example, consider $P_5$ the path graph on $\hat{x}_1\hat{x}_2\hat{x}_3\hat{x}_4\hat{x}_5$ and $G:=(P_5)_{\{\hat{x}_3,\hat{x}_4\}}^n$ (i.e.\ $G$ is obtained by taking $n$ copies of $P_5$ which all agree on the edge $\hat{x}_3\hat{x}_4$ and are otherwise disjoint; see \Cref{figure P5}).   Here each copy $P$ of $P_5$ in $G$ can be uniquely identified by specifying the edges of $G$ which play the role of $\hat{x}_1\hat{x}_2$ and $\hat{x}_4\hat{x}_5$ in $P$, implying that $G$ contains at most $e(G)^2$ copies of $P_5$.  The same argument works\footnote{Technically specifying $\hat{x}_2\hat{x}_3$ and $\hat{x}_4\hat{x}_5$ only specifies all of the ``canonical'' $P_5$'s, i.e.\ the ones that go from left to right.  However, by considering random 5-partitions of the host graph $G$ one can reduce the problem of upper bounding copies of $P_5$ in $G$ with upper bounding ``canonicall'' copies of $P_5$.} if we alternatively specified which edges play the role of $\hat{x}_2\hat{x}_3$ and $\hat{x}_4\hat{x}_5$ in $P$.
	
	In the proof above, it is  critical that we specify the edge of $G$ playing the role of $\hat{x}_4\hat{x}_5$, as even if we specify every other edge of a copy of $P_5$, there would still exist $n$ ways to extend these specified edges into distinct copies of $P_5$ in $G$.  More generally, the edge set $E$ we wish to choose for each copy must ``intersect'' with every subtree which has ``many options'' for being extended.
	
	More precisely, for each monomorphism $\phi:V(T)\to V(G)$, we wish to define a subtree system $\c{T}_\phi$ of $T$ whose elements have ``many extensions'' with respect to $\phi$, in the sense that for each $T'\in \c{T}_\phi$, there exist many monomorphisms $\phi'$ which agree with $\phi$ outside of $T'$ (i.e.\ there exist many ways of changing how $\phi$ maps $T'$ while maintaining that $\phi$ is a monomorphism).  For example, if $G=(P_5)^n_{\{\hat{x}_3,\hat{x}_4\}}$ then for every $\phi$ we will define $\c{T}_\phi$ to consist of the subtrees which either contain $\hat{x}_1\hat{x}_2$ or $\hat{x}_5$, i.e.
	\[
	\c{T}_{\phi}=\{\hat{x}_1\hat{x}_2,\hat{x}_1\hat{x}_2\hat{x}_3,\hat{x}_1\hat{x}_2\hat{x}_3\hat{x}_4,\hat{x}_1\hat{x}_2\hat{x}_3\hat{x}_4\hat{x}_5,\hat{x}_2\hat{x}_3\hat{x}_4\hat{x}_5,\hat{x}_3\hat{x}_4\hat{x}_5,\hat{x}_4\hat{x}_5,\hat{x}_5\}.
	\]
	More generally, if $G=T_R^n$, then $\c{T}_\phi$ will consist of the subtrees that contain a connected component of $T-R$.
	
	The key insight about working with these sets $\c{T}_\phi$ is that whatever edge set $E$ we ultimately choose to identify each copy $\phi$ with, this set $E$ must have the property that every $T'\in \c{T}_\phi$ has $\phi(V(T'))$ intersecting with an edge in $E$.  Indeed, if this were not the case then $E$ would necessarily be contained in ``many'' copies of $T$, namely those which agree with $\phi$ outside of the subtree $T'\in \c{T}_\phi$ which does not intersect with $E$.  Ultimately then, we require a sufficient condition to guarantee that a subtree system $\c{T}$ can be pierced by a small set of edges, which motivates the need for our Edge Helly Theorem, \Cref{edge Helly}.
	
	\subsection{Formal Details}
	
	We begin by collecting some technical definitions that will be used throughout this section and the next. Our first definitions precisely characterize which subtrees $T'\subseteq T$ have ``many extensions'', as mentioned in the proof sketch above.
	
	\begin{definition}\label{definition pseudo piercing}
		Let $T$ be a tree, $G$ a graph, $\Phi\subseteq \Mon(T,G)$ a collection of monomorphisms and $C>0$ a constant. Given a subtree $T'\sub T$, define the  \textbf{neighborhood of $T'$}
		\[
		N(T'):=\{\hat{x}\in V(T)\sm V(T'):N(\hat{x})\cap V(T')\ne \emptyset\}
		. \]
		For $S\sub V(G)$ we define the \textbf{extendable set}
		\[
		\Psi(T',S;\Phi):=\{\phi|_{V(T')}:\phi\in \Phi,\ \phi(N(T'))=S\}.
		\]
		That is, $\Psi$ is the restriction to $T'$ of all maps $\phi\in \Phi$ which map $N(T')$ to $S$, i.e.\ these are maps of $T'$ which can be extended to a map of $T$ while acting on $N(T')$ in a specified way. Finally, If $\phi\in \Phi$, then we define the \textbf{set of highly extendable subtrees}, $\c{T}_{\phi,\Phi,C}$ to be the set of subtrees $T'\sub T$ such that $|\Psi(T',\phi(N(T'));\Phi)|\ge C$.
	\end{definition}
	
	Our next set of definitions gives a generalization of the notion of piercing which may be of independent interest.
	
	\begin{definition}
		Given a tree $T$, a set of distinguishing monomorphisms $\Phi\sub \Mon(T,G)$, and some $\phi\in \Phi$, we say a set of vertices $\widehat{W}\sub V(T)$ \textbf{$(\phi,\Phi)$-pseudo-pierces} a subtree $T'\sub T$ if there exists a vertex $\hat{x}\in V(T')$ such that 
		\[
		|\{\phi'(\hat{x}):\phi'\in \Phi,\ \phi'(\widehat{W})=\phi(\widehat{W})\}|\le v(T).
		\]
		Similarly, we say $\widehat{W}$ $(\phi,\Phi)$-pseudo-pierces a subtree system $\c{T}$ if it $(\phi,\Phi)$-pseudo-pierces every $T'\in \c{T}$.

		Given a set of edges $\widehat{E}\subseteq E(T)$, we let $V(\widehat{E})$ denote the set of vertices of $T$ incident to an edge of $\widehat{E}$.  If $\widehat{E}$ is a set of at most $a$ edges and if $\widehat{U}\subseteq V(T)$ a set of at most $b$ vertices, we say that $(\widehat{E},\widehat{U})$ is an \textbf{$(a,b,\phi,\Phi)$-pseudo-piercing set} for $\mathcal{T}$ if $V(\widehat{E})\cup \widehat{U}$ $(\phi,\Phi)$-pseudo-pierces $\c{T}$.  Similarly we say that $(\widehat{E},\widehat{U})$ is an \textbf{$(a,b)$-piercing set} for $\c{T}$ if $V(\widehat{E})\cup \widehat{U}$ pierces $\c{T}$.
	\end{definition}
	It will be useful to record the easy fact that piercing implies pseudo-piercing which we will implicitly use throughout.
	\begin{lemma}
		If $T$ is a tree and if $\widehat{W}\sub V(T)$ pierces some subtree $T'\sub T$, then $\widehat{W}$ $(\phi,\Phi)$-pseudo-pierces $T'$ for any set of distinguishing monomorphisms $\Phi\sub \Mon(T,G)$ and $\phi\in \Phi$.  In particular, $(a,b)$-piercing sets are $(a,b,\phi,\Phi)$-pseudo-piercing sets.
	\end{lemma}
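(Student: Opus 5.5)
The proof is a direct unwinding of the definitions. Suppose $\widehat{W}\sub V(T)$ pierces $T'$, so there is a vertex $\hat{x}\in V(T')\cap \widehat{W}$. We use this same vertex $\hat{x}$ as the witness required by pseudo-piercing. It then suffices to show that the set
\[
A:=\{\phi'(\hat{x}):\phi'\in \Phi,\ \phi'(\widehat{W})=\phi(\widehat{W})\}
\]
has size at most $v(T)$; in fact we will show $|A|\le 1$.

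Let $\phi'\in\Phi$ satisfy $\phi'(\widehat{W})=\phi(\widehat{W})$. Since $\Phi$ is distinguishing, \Cref{distinguishing facts}\ref{item distinguishing sets} applied with $X=\widehat{W}$ gives $\phi'(\hat{y})=\phi(\hat{y})$ for every $\hat{y}\in\widehat{W}$. In particular $\phi'(\hat{x})=\phi(\hat{x})$, because $\hat{x}\in\widehat{W}$.

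This holds for every $\phi'$ in the defining condition of $A$, and $\phi$ itself satisfies that condition, so $A=\{\phi(\hat{x})\}$. Hence $|A|=1\le v(T)$, and $\widehat{W}$ $(\phi,\Phi)$-pseudo-pierces $T'$.

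For the ``in particular'' statement, let $(\widehat{E},\widehat{U})$ be an $(a,b)$-piercing set for $\c{T}$. Then $\widehat{W}:=V(\widehat{E})\cup\widehat{U}$ pierces every $T'\in\c{T}$. By the argument above, $\widehat{W}$ $(\phi,\Phi)$-pseudo-pierces every $T'\in\c{T}$, so it pseudo-pierces $\c{T}$. The size bounds on $\widehat{E}$ and $\widehat{U}$ are the same in both definitions, so $(\widehat{E},\widehat{U})$ is an $(a,b,\phi,\Phi)$-pseudo-piercing set.

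The only step that needs any care is the use of the distinguishing property. Without it, $\phi'(\widehat{W})=\phi(\widehat{W})$ would only give equality of image sets, and $\phi'$ could permute $\widehat{W}$. Even then the bound $|A|\le|\widehat{W}|\le v(T)$ would still hold, which is presumably why the threshold in the definition is $v(T)$.
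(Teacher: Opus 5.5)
Your proof is correct and matches the paper's argument: take $\hat{x}\in\widehat{W}\cap V(T')$ and apply \Cref{distinguishing facts}\ref{item distinguishing sets} to get $\phi'(\hat{x})=\phi(\hat{x})$, so the relevant set has size $1\le v(T)$. The ``in particular'' clause then follows just as you say, and your side remark that $|\widehat{W}|\le v(T)$ would suffice even without the distinguishing property is also valid.
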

	Indeed, this follows because if $\Phi$ is distinguishing, we have by \Cref{distinguishing facts}(a) that any $\hat{x}\in \widehat{W}\cap V(T')$ (which exists by assumption of piercing) has
	\[|\{\phi'(\hat{x}):\phi'\in \Phi,\ \phi'(\widehat{W})=\phi(\widehat{W})\}|\le |\{\phi'(\hat{x}):\phi'\in \Phi,\ \phi'(\hat{x})=\phi(\hat{x})\}|=1,\]
	proving this $(\phi,\Phi)$-pseudo-pierces $T'$.
	
	The notion of psuedo-piercing is not needed to prove our main result of this section, \Cref{theorem:treeBoundEG}.  However, it will be necessary in \Cref{section refinement for paths}, and so we present our results here in terms of the general notion of pseudo-piercing so that we may use them later on and in potential future works.
	
	As hinted at in the proof intuition above, our goal will be to show that the families $\c{T}_{\phi,\Phi,C}$ can be pierced by a small number of edges using the Edge Helly Theorem.  We will then use this in conjunction with the following to show that this implies $G$ contains few copies of $T$.
	
	\begin{proposition}\label{piercing equals few copies}
		Let $T\ne K_1$ be a tree, $G$ a graph, $\Phi\sub \Mon(T,G)$ a set of distinguishing monormphisms, and $C,a,b$ non-negative integers.  If  $\Phi'\sub \Phi$ is such that for every $\phi\in \Phi'$, the collection $\c{T}_{\phi,\Phi,C}$ has an $(a,b,\phi,\Phi)$-pseudo-piercing set, then
		\[
		|\Phi'|=O_C(e(G)^a v(G)^b).
		\]
	\end{proposition}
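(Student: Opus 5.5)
The plan is a double-counting (encoding) argument: each $\phi\in\Phi'$ will be recorded by a bounded amount of discrete data together with the images of at most $a$ edges and $b$ vertices. For each $\phi\in\Phi'$, fix an $(a,b,\phi,\Phi)$-pseudo-piercing set $(\widehat{E}_\phi,\widehat{U}_\phi)$ for $\c{T}_{\phi,\Phi,C}$, and write $\widehat{W}_\phi:=V(\widehat{E}_\phi)\cup\widehat{U}_\phi$. Since $T$ is fixed, there are only $O_T(1)$ possible pairs $(\widehat{E},\widehat{U})$, so by pigeonhole it suffices to bound the number of $\phi\in\Phi'$ sharing a fixed pair $(\widehat{E},\widehat{U})$, with $\widehat{W}:=V(\widehat{E})\cup\widehat{U}$.

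Next I count restrictions to $\widehat{W}$. For such $\phi$, the restriction $\phi|_{\widehat{W}}$ is determined by two pieces of data:
\begin{itemize}
\item the ordered image of each edge of $\widehat{E}$, with at most $2e(G)$ choices per edge;
\item the image of each vertex of $\widehat{U}$, with at most $v(G)$ choices per vertex.
\end{itemize}
So there are at most $(2e(G))^a v(G)^b$ possibilities for $\phi|_{\widehat{W}}$. If $e(G)=0$ then $\Phi=\emptyset$, because $T\neq K_1$ has an edge, so nothing is lost. Moreover, since $\Phi$ is distinguishing, \Cref{distinguishing facts}\ref{item distinguishing sets} shows that $\phi'(\widehat{W})=\phi(\widehat{W})$ forces $\phi'|_{\widehat{W}}=\phi|_{\widehat{W}}$. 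It therefore remains to show the key claim: for a fixed $\phi_0\in\Phi'$, the set $A$ of $\phi\in\Phi'$ with $\phi|_{\widehat{W}}=\phi_0|_{\widehat{W}}$ has size $O_{C,T}(1)$.

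To prove the claim I would run a branching process that pins down $\phi$ vertex by vertex, with bounded branching at each step. Maintain a set $S\supseteq\widehat{W}$ together with a bounded number of candidate values for $\phi|_S$, where $\phi\in A$; initially $S=\widehat{W}$ with one candidate. While $S\neq V(T)$, pick a component $T'$ of $T-S$, and note that $N(T')\subseteq S$. Fix a candidate value of $\phi|_S$; this fixes $\phi(N(T'))=:Z$. There are two cases.
\begin{itemize}
\item If $|\Psi(T',Z;\Phi)|<C$, then $\phi|_{V(T')}$ takes fewer than $C$ values, so we add all of $V(T')$ to $S$ with branching factor less than $C$.
\item Otherwise $T'\in\c{T}_{\phi,\Phi,C}$, for any $\phi$ consistent with the candidate. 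Since $\widehat{W}$ $(\phi,\Phi)$-pseudo-pierces $T'$, there is some $\hat{x}\in V(T')$ whose image takes at most $v(T)$ values over maps $\phi'\in\Phi$ with $\phi'(\widehat{W})=\phi(\widehat{W})=\phi_0(\widehat{W})$. We add $\hat{x}$ to $S$ with branching factor at most $v(T)$; the number of choices of $\hat{x}$ is at most $v(T)$ and can be absorbed.
\end{itemize}
Each step grows $S$, so there are at most $v(T)$ steps. This gives $|A|\le (v(T)\max(C,v(T)))^{v(T)}=O_C(1)$. The degenerate case $\widehat{W}=\emptyset$ (when $a=b=0$) is covered: there $T'=T$ and $N(T)=\emptyset$.

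The main obstacle is making this pinning step rigorous. The set $\c{T}_{\phi,\Phi,C}$, and the vertex $\hat{x}$ supplied by pseudo-piercing, depend on $\phi$ itself, so the argument must be organised so that the choice of $\hat{x}$ is recorded as part of the bounded encoding. The key point that makes this work is that both conditions are determined by data already fixed. Membership of $T'$ in $\c{T}_{\phi,\Phi,C}$ depends only on $\phi(N(T'))$, which the current candidate for $\phi|_S$ fixes. The pseudo-piercing bound on the images of $\hat{x}$ depends only on $\phi(\widehat{W})$, which is common to all of $A$. Combining the pigeonhole over $(\widehat{E},\widehat{U})$, the count of restrictions, and the bound on $|A|$ gives $|\Phi'|=O_C(e(G)^a v(G)^b)$.
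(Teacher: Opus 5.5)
Your proof is correct and is essentially the paper's argument: the vertices $\hat{x}$ whose images are bounded once $\phi_0(\widehat{W})$ is fixed play the role of the paper's set $\widehat{V}_{E,U}$, and the paper fixes $\phi$ on all of that set at once (at most $v(T)^{v(T)}$ choices) and then notes that every component of $T-\widehat{V}_{E,U}$ has fewer than $C$ extensions, which is your adaptive branching process collapsed into a single step. The other differences are minor---you encode by the template $(\widehat{E},\widehat{U})$ together with ordered images rather than by the image sets $E_\phi,U_\phi$, so the distinguishing hypothesis plays no real role in your version---but you should define $A$ only among those $\phi$ whose chosen pair is the fixed $(\widehat{E},\widehat{U})$, since Case 2 needs $\widehat{W}$ to be $\phi$'s own pseudo-piercing set.
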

	
	\begin{proof}
		The result is trivial if $e(G)=0$ since $e(T)\ge 1$, so we may assume $e(G)>0$ from now on.  For each $\phi \in \Phi$, let $\widehat{E}_\phi,\widehat{U}_\phi$ be the set of at most $a$ edges and at most $b$ vertices of $T$ such that $V(\widehat{E}_\phi)\cup \widehat{U}_\phi$ is a $(\phi,\Phi)$-pseudo-piercing set for $\mathcal{T}_{\phi,\Phi,C}$, and define
		\[
		E_\phi:=\{\phi(\hat{x})\phi(\hat{y}):\hat{x}\hat{y}\in \widehat{E}_\phi\},
		\] 
		and 
		\[U_\phi:=\phi(\widehat{U}_\phi)=\{\phi(\hat{x}):\hat{x}\in \widehat{U}_\phi\}.\]
		Observe that $E_\phi$ is a set of at most $a$ edges of $G$ since $\phi$ is a homomorphism and similarly that $U_\phi$ is a set of at most $b$ vertices of $G$. 
		
		We aim to show that for each set of at most $a$ edges $E\sub E(G)$ and each set of at most $b$ vertices $U\sub V(G)$, the number of $\phi\in \Phi'$ with $E_{\phi}=E$ and $U_\phi=U$ is $O(1)$, from which it will follow that $|\Phi'|=O(e(G)^{a}v(G)^b)$ since there are at most $O(e(G)^av(G)^b)$ choices for $E,U$ (here we implicitly use the assumption $e(G),v(G)>0$).
		
		From now on we fix some $E,U$ as above.	  Define \[\Phi'_{E,U}:=\{\phi\in \Phi': V(E)\cup U\sub \phi(V(T))\},\] and 
		\[
		\widehat{V}_{E,U}:=\{\hat{x}\in  V(T): |\{\phi(\hat{x}):\phi\in \Phi_{E,U}'\}|\le v(T)\}.
		\]    
		The motivation for this is the following.
		
		\begin{claim}
			If $\phi\in \Phi'$ is such that $E_\phi=E$ and $U_\phi=U$, then for every $T'\in \c{T}_{\phi,\Phi,C}$, we have $V(T')\cap \widehat{V}_{E,U}\ne \emptyset$.
		\end{claim}
		
		\begin{proof}
			By hypothesis of the proposition, for every $T'\in \c{T}_{\phi,\Phi,C}$ there exists $\hat{x}\in V(T')$ such that 
			\begin{equation}\label{equation claim 4.4 interpreting piercing}
				\left|\left\{\psi(\hat{x}):\psi\in \Phi,\ \psi\left(V(\widehat{E}_\phi)\cup \widehat{U}_\phi\right)=\phi\left(V(\widehat{E}_\phi)\cup \widehat{U}_\phi\right)\right\}\right|\le v(T).
			\end{equation}
			Note that $\phi\left(V(\widehat{E}_\phi)\cup \widehat{U}_\phi\right)=V(E_\phi)\cup U_\phi=V(E)\cup U$. This, \eqref{equation claim 4.4 interpreting piercing}, and the fact that $\Phi'\subseteq \Phi$ gives
			\begin{equation}\label{equation claim 4.4 interpreting piercing 2}
				\left|\left\{\psi(\hat{x}):\psi\in \Phi',\ \psi\left(V(\widehat{E}_\phi)\cup \widehat{U}_\phi\right)=V(E)\cup U\right\}\right|\le v(T).
			\end{equation}
			Since $\Phi'$ is distinguishing,  for any $\psi\in \Phi'$, having $\psi(V(\widehat{E}_\phi)\cup \widehat{U}_\phi)=V(E)\cup U$ is equivalent to having $V(E)\cup U\sub \psi(V(T))$.  Thus~\eqref{equation claim 4.4 interpreting piercing 2} is equivalent to $|\{\psi(\hat{x}):\psi\in \Phi'_{E,U}\}|\le v(T)$, which means that $\hat{x}\in \widehat{V}_{E,U}$ by definition.
		\end{proof}
		
		Let $T'_1,\ldots,T'_m$ denote the connected components of $T-\widehat{V}_{E,U}$.  The final fact we need is the following.
		
		\begin{claim}\label{cl:ETSmall}
			If there exists some $\phi\in \Phi'$ such that $E_\phi=E$ and $U_\phi=U$, then for all $j$,
			\[
			|\Psi(T'_j,\phi(N(T'_j));\Phi)|<C.
			\]
		\end{claim}
		
		\begin{proof}
			Assume for contradiction that this did not hold for some $\phi,j$.  By definition this implies $T'_j\in \c{T}_{\phi,\Phi,C}$, but by the previous claim this implies $V(T'_j)\cap \widehat{V}_{E,U}\ne \emptyset$.  This is impossible since $V(T'_j)\sub V(T)\sm \widehat{V}_{E,U}$, giving the desired contradiction.
		\end{proof}
		
		We ultimately aim to show that the number of $\phi\in \Phi'$ with $E_\phi=E$ and $U_\phi=U$ is at most $(C\cdot v(T))^{v(T)}=O(1)$, which will give the desired result. This is certainly true if no such $\phi$ exists, so we may assume that at least one such $\phi$ exists.  Moreover, we observe that every such $\phi$ must have $\phi\in \Phi'_{E,U}$ since this is the only possible way one can have $E_\phi=E$ and $U_\phi=U$ by definition of these sets.
		
		For each such $\phi$, let $\phi_j:=\phi|_{V(T'_j)}$ for $j\ge 1$ and let $\phi_0:=\phi|_{\widehat{V}_{E,U}}$.  Observe that $\phi$ is uniquely determined by the tuple $(\phi_0,\phi_1,\ldots,\phi_m)$ since every $\hat{x}\in V(T)$ is either in $\widehat{V}_{E,U}$ or $V(T'_j)$ for some $j\geq 1$. Note that the number of choices for $\phi_0$ is at most $v(T)^{|\widehat{V}_{E,U}|}\le v(T)^{v(T)}$ by the definition of $\widehat{V}_{E,U}$ and the fact that $\phi\in \Phi'_{E,U}$.  To count the choices for $\phi_j$ with $j\ge 1$ given $\phi_0$, we first observe that $N(T'_j)\sub \widehat{V}_{E,U}$ by definition of $T'_j$ being a connected component of $T-\widehat{V}_{E,U}$.  It follows then that $\phi(N(T'_j))=\phi_0(N(T'_j))$, so given $\phi_0$ the set $\Psi(T'_j,\phi(N(T'_j));\Phi)$ is determined.  Moreover, $\phi_j$ is an element of $\Psi(T'_j,\phi(N(T'_j));\Phi)$  since $\phi_j$ equals $\phi|_{V(T'_j)}$ with $\phi\in \Phi$.  By \Cref{cl:ETSmall}, there are at most $C$ choices for $\phi_j$ in this set, and hence at most $C^{m}\leq  C^{v(T)}$ choices of the vectors $\phi_1,\ldots,\phi_m$ given $\phi_0$.  In total then the number of choices for the vector $(\phi_0,\phi_1,\ldots,\phi_m)$ encoding $\phi$ is at most $(C\cdot v(T))^{v(T)}=O(1)$.  Putting all of this together gives the desired bound of $|\Phi'|=O(e(G)^{a}v(G)^b)$.
	\end{proof}
	
	It remains to find conditions for which $\c{T}_{\phi,\Phi,C}$ can be pierced by a small number of edges.  For this we will utilize the following essentially equivalent version of \Cref{edge Helly}.
	
	\begin{lemma}\label{lemma edge helly lemma}
		Let $T\ne K_1$ be a tree, $\c{T}$ a subtree system of $T$ and $k\geq 0$ an integer.  If for every collection $T'_1,\ldots,T'_{k+1}\in \c{T}$, there exists some $i\neq j$ such that $(V(T'_i)\cup N(T'_i))\cap V(T'_j)\neq \emptyset$, then there exists a set of at most $k$ edges that pierces $\mathcal{T}$.
	\end{lemma}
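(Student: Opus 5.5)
The plan is to reduce to \Cref{edge Helly}. By that theorem it suffices to show that every subcollection $\c{T}'\sub\c{T}$ of size at most $k+1$ can be pierced by at most $k$ edges. So fix $T'_1,\ldots,T'_\ell\in\c{T}$ with $\ell\le k+1$. We may assume $k\ge 1$. When $k=0$ the hypothesis, applied to the single tree $T'_1$ with $i\ne j$ impossible, can never be satisfied unless $\c{T}=\emptyset$; in that case the empty edge set pierces $\c{T}$ trivially. (If trees may be repeated in the collection, the case $k=0$ forces $\c{T}$ to be empty in any event.)

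\textbf{Case $\ell\le k$.} Here I pierce each $T'_i$ by its own edge. Since $T\ne K_1$ is connected, every vertex of $T$ lies on some edge. Choosing one edge incident to a vertex of each $T'_i$ gives at most $\ell\le k$ edges piercing the subcollection.

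\textbf{Case $\ell=k+1$.} The hypothesis (applied to the distinct trees $T'_1,\dots,T'_{k+1}$; if some coincide we are back in the previous case) gives $i\ne j$ with $(V(T'_i)\cup N(T'_i))\cap V(T'_j)\ne\emptyset$. I then show that a single edge pierces both $T'_i$ and $T'_j$.
\begin{itemize}
\item If some vertex $v$ lies in $V(T'_i)\cap V(T'_j)$, any edge at $v$ works.
\item Otherwise some $\hat{y}\in N(T'_i)\cap V(T'_j)$. By definition of $N(T'_i)$, $\hat{y}$ has a neighbor $\hat{x}\in V(T'_i)$, and the edge $\hat{x}\hat{y}$ is incident to a vertex of each tree.
\end{itemize}
Combining this one edge with one edge for each of the remaining $k-1$ trees gives at most $k$ edges piercing all $k+1$ trees.

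Applying \Cref{edge Helly} now finishes the proof. I expect no real obstacle here. The only care needed is with degenerate situations: the $k=0$ case, repeated trees in the collection, and the assumption $T\ne K_1$, which guarantees that every vertex has an incident edge.
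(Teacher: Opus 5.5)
Your proof is correct and follows the paper's argument. Like the paper, you reduce to the Edge Helly Theorem, use the hypothesis to find a single edge meeting both $T'_i$ and $T'_j$, and pierce the remaining $k-1$ trees with one edge each; your explicit handling of subcollections of size at most $k$ and of the $k=0$ case is extra care that the paper leaves implicit.
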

	
	\begin{proof}
		If $k=0$, the statement is vacuously true, so assume $k\geq 1$. Let $\mathcal{T}':=\{T'_1,\ldots,T'_{k+1}\}$ be a subcollection of $\mathcal{T}$. Our goal is to find a set of $k$ edges that pierces $\mathcal{T}'$. Let $i,j$ be such that $(V(T'_i)\cup N(T'_i))\cap V(T'_j)\neq \emptyset$ and assume without loss of generality that $i=k$ and $j=k+1$. Let $\hat{u}\in (V(T'_{k})\cup N(T'_{k}))\cap V(T'_{k+1})$, and let $\hat{e}_{k}$ be any edge containing $\hat{u}$ that intersects $V(T_{k}')$, which exists since $\hat{u}\in V(T'_{k})\cup N(T'_{k})$. Note that $\hat{e}_{k}$ intersects both $V(T_{k})$ and $V(T_{k+1})$.
		
		Now for each $i\in [k-1]$, let $\hat{e}_i$ be an edge intersecting $V(T_i)$. Then $\{\hat{e}_1,\hat{e}_2,\dots,\hat{e}_{k}\}$ pierces $\mathcal{T}'$. Thus by \Cref{edge Helly}, there is a set of at most $k$ edges which pierces $\mathcal{T}$.
	\end{proof}
	
	With this we can prove the following.
	
	\begin{proposition}\label{GT Pierces by Edges}
		Let $T\ne K_1$ be a tree and $q \ge 2$ an integer. Then there exists a contant $C_0=C_0(T,q)$ depending only on $T$ and $q$ such that the following holds:
		
		If $k\geq 0$ and $C\geq C_0$ are integers, $G$ is a $\mathcal{F}_{T,k+1}^q$-free graph, and $\Phi\sub \Mon(T,G)$ a set of distinguishing monomorphisms, then for each $\phi\in \Phi$ there exists a set of at most $k$ edges $\widehat{E}_\phi\sub E(T)$ such that $\widehat{E}_\phi$ pierces $\mathcal{T}_{\phi,\Phi,C}$.
	\end{proposition}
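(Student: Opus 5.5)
We prove the contrapositive via \Cref{lemma edge helly lemma}. Fix $\phi\in\Phi$ and write $\c{T}:=\c{T}_{\phi,\Phi,C}$. Suppose $\c{T}$ cannot be pierced by $k$ edges. Then \Cref{lemma edge helly lemma} provides subtrees $T'_1,\ldots,T'_{k+1}\in\c{T}$ such that $(V(T'_i)\cup N(T'_i))\cap V(T'_j)=\emptyset$ for all $i\ne j$. In other words, the $T'_i$ are pairwise vertex-disjoint and pairwise non-adjacent. Set $R:=V(T)\sm\bigcup_i V(T'_i)$. Every $T'_i$ is connected and no edge joins two different $T'_j$'s, so each $T'_i$ lies inside a single component of $T-R$, and distinct $T'_i$ lie in distinct components. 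Hence $T-R$ has at least $k+1$ components and $R\subsetneq V(T)$. Our aim is to find a copy of $T_R^q$ in $G$, which contradicts $G$ being $\c{F}_{T,k+1}^q$-free.

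To build this copy we use that each $T'_i$ is highly extendable. Let $S_i:=\phi(N(T'_i))$. Because $N(T'_i)\sub R$, we have $|\Psi(T'_i,S_i;\Phi)|\ge C$. Any $\psi\in\Psi(T'_i,S_i;\Phi)$ is the restriction of some $\phi'\in\Phi$ with $\phi'(N(T'_i))=S_i$. Since $\Phi$ is distinguishing, \Cref{distinguishing facts}\ref{item distinguishing sets} gives $\phi'=\phi$ on $N(T'_i)$. So gluing $\psi$ onto $\phi$ on $V(T)\sm V(T'_i)$ keeps every edge between $T'_i$ and the rest of $T$ (such edges have their other endpoint in $N(T'_i)$). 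It also keeps injectivity, because images lie in the disjoint parts $V_{\hat x}$ of \Cref{distinguishing facts}\ref{item distinguishing partition}. Furthermore, all edges between different $T'_j$'s are absent, and every edge leaving $T'_i$ goes to $R$. Hence for any choice of $\psi_i\in\Psi(T'_i,S_i;\Phi)$ for each $i$, the map that equals $\phi$ on $R$ and $\psi_i$ on $T'_i$ is a monomorphism of $T$ into $G$.

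Now we choose the $q$ copies greedily. For $\ell=1,\dots,q$ and each $i$, we pick $\psi_i^{(\ell)}\in\Psi(T'_i,S_i;\Phi)$ whose image avoids the images of all previously chosen $\psi_i^{(\ell')}$, $\ell'<\ell$. Such a choice also avoids $\phi(R)$ automatically, by the partition property. The main obstacle is this step: the $C$ distinct restrictions may pairwise overlap, so we need many pairwise vertex-disjoint ones. We handle it by applying the sunflower lemma (Erd\H{o}s–Rado, as in \Cref{lemma:sunflower}) to the family of image sets $\psi(V(T'_i))$, $\psi\in\Psi(T'_i,S_i;\Phi)$. Each of these sets has size at most $v(T)$, and at most $v(T)!$ of the restrictions share a given image. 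So for $C\ge C_0(T,q)$ large, we obtain $q$ restrictions $\psi_i^{(1)},\dots,\psi_i^{(q)}$ whose images form a sunflower, and pigeonholing on preimages as in \Cref{lemma:sunflower} makes them agree exactly on a kernel set $K_i\subsetneq V(T'_i)$ and be disjoint elsewhere. If $K_i\neq\emptyset$, we add $K_i$ to $R$. This shrinks $T'_i$ to a proper sub-forest, but the component count of $T-R$ is still at least $k+1$, since the nonempty set $V(T'_i)\sm K_i$ still lies in components distinct from those of the other $T'_j$. Components of $T'_i-K_i$ are then non-adjacent across copies, and the petals are disjoint.

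With $R':=R\cup\bigcup_i K_i$, the $q$ monomorphisms $\phi^{(\ell)}$ that combine $\phi$ on $R$ with the $\psi_i^{(\ell)}$ agree exactly on $R'$ and are disjoint elsewhere. Their union is therefore a copy of $T_{R'}^q$ with $T-R'$ having at least $k+1$ components, which is the desired contradiction. So $C_0$ can be taken as the sunflower bound $v(T)!\cdot (q v(T)!)!\,(v(T)-1)^{q v(T)!}$.
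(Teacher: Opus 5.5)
Your argument is correct and is essentially the paper's proof: you take the $k+1$ pairwise non-touching highly extendable subtrees from \Cref{lemma edge helly lemma}, apply \Cref{lemma:sunflower} to each $\Psi(T'_i,\phi(N(T'_i));\Phi)$ to get $q$ restrictions agreeing exactly on a proper kernel $K_i$, glue these to $\phi$ on $V_0=V(T)\sm\bigcup_i V(T'_i)$, and obtain a copy of $T_{R'}^q$ with $R'=V_0\cup\bigcup_i K_i$, just as the paper does with its sets $R_j$. The only blemishes are cosmetic: the greedy-selection paragraph is superfluous once you use the sunflower step, and $|\Psi(T'_i,S_i;\Phi)|\ge C$ holds simply because $T'_i\in\c{T}_{\phi,\Phi,C}$, not because $N(T'_i)\sub R$.
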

	
	\begin{proof}
		We will prove this result with $C_0:=\max_{T'} \Pow(T',q)$ where $\Pow(T',q)$ is the constant from \Cref{lemma:sunflower} and the maximum ranges over all subtrees $T'\sub T$. Fix some $\phi\in\Phi$. By \Cref{lemma edge helly lemma} we may assume for contradiction that there exist $T'_1,\ldots,T'_{k+1}\in \c{T}_{\phi,\Phi,C}$ with $(V(T'_i)\cup N(T'_i))\cap V(T'_j)=\emptyset$ for all $i\ne j$.  
		
		Since $T'_j\in \c{T}_{\phi,\Phi,C}$, we have $|\Psi(T'_j,\phi(N(T'_j)),\Phi)|\geq C\geq C_0\geq\Pow(T'_j,q)$.  By \Cref{lemma:sunflower} and the fact that each element of $\Psi(T'_j,\phi(N(T'_j)),\Phi)$ is a monomorphism from $T'_j$ to $G$, there exist distinct monomorphisms $\psi_j^{(1)},\ldots,\psi_j^{(q)}\in \Psi(T'_j,\phi(N(T'_j)),\Phi)$ as well as a set of vertices $R_j\sub V(T'_j)$ such that $\psi_j^{(p)}(\hat{u})=\psi_{j}^{(p')}(\hat{u})$ for $p\ne p'$ if and only if $\hat{u}\in R_j$, and the images of $\psi_j^{(p)}$ and $\psi_j^{(p')}$ are disjoint outside of $R_j$.
		
		For convenience let $V_j:=V(T'_j)$ and define $V_0:=V(T)\setminus \bigcup_{j\ge 1} V_j$, noting that these sets partition $V(T)$ since we in particular assumed $V(T'_i)\cap V(T'_j)=\emptyset$ at the start of the proof.  We will make use of the following observation.
		\begin{claim}\label{claim each N(T) is in V_0}
			We have $\bigcup_{j\ge 1} N(T'_j)\sub V_0$.
		\end{claim}
		
		\begin{proof}
			We have that $N(T'_j)$ is disjoint from $V_j$ by definition, and it is also disjoint from $V_{j'}$ for all other $j'\ge 1$ by our assumption that $(V(T'_j)\cup N(T'_j))\cap V(T'_{j'})=\emptyset$ for all $j\ne j'$, so $N(T'_j)$ must lie entirely in $V_0=V(T)\setminus \bigcup_{j'\ge 1} V_{j'}$.
		\end{proof}
		
		For convenience define $\psi_0^{(p)}:=\phi|_{V_0}$ for all $1\le p\le q$, and note that each $\psi_j^{(p)}$ is the restriction of some monomorphism in $\Phi$ to $V_j$. For $1\le p\le q$ let $\phi^{(p)}:V(T)\to V(G)$ be the map such $\phi^{(p)}|_{V_j}=\psi_j^{(p)}$ for all $j\ge 0$.  We ultimately aim to show that the maps $\phi^{(1)},\ldots,\phi^{(q)}$ define an element of $\c{F}_{T,k}^q$ in $G$.
		
		\begin{claim}\label{claim each map is a monomorphism}
			Each map $\phi^{(p)}$ is a monomorphism.
		\end{claim}
		\begin{proof}
			Fix $p$. Note that for each $\hat{x}\in V(T)$ we have $\phi^{(p)}(\hat{x})=\phi'(\hat{x})$ for some $\phi'\in \Phi$; indeed if $\hat{x}\in V_j$ then $\phi^{(p)}(\hat{x})=\psi_j^{(p)}(\hat{x})$, and $\psi_j^{(p)}$ is by definition the restriction of some map in $\Phi$.  Because $\Phi$ is distinguishing, this implies that $\phi^{(p)}$ is injective, so it remains to prove that $\phi^{(p)}$ is a homomorphism.
			
			Let $\hat{x}\hat{y}$ be an arbitrary edge of $T$. If $\hat{x},\hat{y}\in V_j$ for some $j$, then $\phi^{(p)}(\hat{x})\phi^{(p)}(\hat{y})=\psi_j^{(p)}(\hat{x})\psi_j^{(p)}(\hat{x})$ is an edge of $G$ since $\psi_j^{(p)}$ is a monomorpism. Thus, we may assume $\hat{x}\in V_j$ and $\hat{y}\in V_{j'}$ for $j\neq j'$, and without loss of generality we may assume $j\geq 1$.
			
			Since $\hat{x}\in V_j$ and $\hat{y}\not\in V_j$, we have that $\hat{y}\in N(T_j')$. Furthermore, \Cref{claim each N(T) is in V_0} gives us that $N(T'_j)\sub V_0$, so $\phi^{(p)}(\hat{y})=\psi_0^{(p)}(\hat{y})=\phi(\hat{y})$. Because $j\ge 1$, we have $\psi_j^{(p)}\in \Psi(T'_j,\phi(N(T'_j)),\Phi)$, which by definition means there exists some $\phi'\in \Phi$ with $\phi'|_{V_j}=\psi_j^{(p)}$ and with $\phi'(N(T'_j))=\phi(N(T'_j))$.  In particular, we must have $\phi'(\hat{y})=\phi(\hat{y})$ because $\hat{y}\in N(T'_j)$ and $\Phi$ is distinguishing.  In total, we have
			\[
			\phi^{(p)}(\hat{x})\phi^{(p)}(\hat{y})=\psi_j^{(p)}(\hat{x})\phi(\hat{y})=\phi'(\hat{x})\phi'(\hat{y})\in E(G).
			\]
		\end{proof}
		
		Define \[R:=V_0\cup \bigcup_{j\ge 1} R_j.\]
		\begin{claim}\label{claim amalgamation}
			The following holds:
			\begin{enumerate}[label=(\roman*)]
				\item For all $p\ne p'$, we have $\phi^{(p)}(\hat{x})=\phi^{(p')}(\hat{y})$ if and only if  $\hat{x}=\hat{y}$ and $\hat{x}\in R$.\label{claim amalgamation (ii'') well-defined}
				\item $T-R$ has at least $k+1$ connected components.\label{claim amalgamation (iii'') T-R has k components}
			\end{enumerate}
		\end{claim}
		\begin{proof}
			For the reverse direction of~\ref{claim amalgamation (ii'') well-defined}, if $\hat{x}=\hat{y}$ lies in some $R_j\sub V_j$, then by definition of $R_j$,  
			\[
			\phi^{(p)}(\hat{x})=\psi_j^{(p)}(\hat{x})=\psi_{j}^{(p')}(\hat{x})=\phi^{(p')}(\hat{x}).
			\] 
			If instead $\hat{x}=\hat{y}\in V_0$, then
			\[
			\phi^{(p)}(\hat{x})=\phi(\hat{x})=\phi^{(p')}(\hat{x}).
			\] 
			
			Now, for the forward direction of~\ref{claim amalgamation (ii'') well-defined}, assume that $\phi^{(p)}(\hat{x})=\phi^{(p')}(\hat{y})$. Note that by the definition of $\phi^{(p)}$ and $\phi^{(p')}$, there exists $\varphi,\varphi'\in\Phi$ such that $\phi^{(p)}(\hat{x})=\varphi(\hat{x})$ and $\phi^{(p')}(\hat{y})=\varphi'(\hat{y})$, and thus since $\Phi$ is distinguishing, we must have that $\hat{x}=\hat{y}$. 
			
			If $\hat{x}\in V_0\subseteq R$, we are done, so assume $\hat{x}\in V_j$ for some $j\geq 1$. In this case, we have that 
			\[
			\psi_j^{(p)}(\hat{x})=\phi^{(p)}(\hat{x})=\phi^{(p')}(\hat{x})=\psi_{j}^{(p')}(\hat{x}),
			\]
			and thus $\hat{x}\in R_j\subseteq R$ by the definition of $R_j$. 
			
			For~\ref{claim amalgamation (iii'') T-R has k components}, let $\hat{x}_j$ be an arbitrary vertex in $V(T'_j)\sm R_j$ for each $j\ge 1$, which exists since $R_j$ is a proper subset of $V(T'_j)$. Now, given $j\neq j'$, if $P$ is the path in $T$ from $\hat{x}_j$ to $\hat{x}_{j'}$, then $P$ starts in $V(T_j)$ and ends outside $V(T_j)$, so in particular $P$ must contain a vertex in $N(T_j)$, and so by \Cref{claim each N(T) is in V_0}, $P$ contains a vertex in $V_0\subseteq R$. Thus, $\hat{x}_j$ and $\hat{x}_{j'}$ are in different components of $T-R$, giving at least $k+1$ components of $T-R$.
		\end{proof}
		In total, \Cref{claim each map is a monomorphism} gives us that each $\phi^{(p)}$ is a monomorphism, and Claim~\ref{claim amalgamation} gives us that the graph-images of the $\phi^{(p)}$'s constitute a copy of $T_R^\pow\in \c{F}_{T,k+1}^\pow$. Thus contradicts the fact that $G$ is $\c{F}_{T,k+1}^\pow$-free, proving the result.
	\end{proof}
	
	We now put these pieces together to prove the main result of this section.
	\begin{proof}[Proof of \Cref{theorem:treeBoundEG}]
		Let $G$ be an $n$-vertex $\c{F}_{T,k+1}^\pow$-free graph, and let $C:=C_0(T,q)$ be the constant guaranteed by \Cref{GT Pierces by Edges}. By \Cref{distinguishing large set of maps} there exists a set of distinguishing monomorphisms $\Phi\sub \Mon(T,G)$ with $|\Phi|=\Theta(\mon(T,G))$. Then by \Cref{GT Pierces by Edges}, we have that for every $\phi\in \Phi$, there exists a set of at most $k$ edges $\widehat{E}_\phi\sub E(T)$ such that every $T'\in \c{T}_{\phi,\Phi,C}$ contains a vertex $\hat{x}\in V(\widehat{E}_\phi)$.  In particular, $\c{T}_{\phi,\Phi,C}$ has a $(k,0,\phi,\Phi)$-pseudo-piercing set. Then, by \Cref{piercing equals few copies} (applied with $\Phi'=\Phi$), we have that $|\Phi|=O(e(G)^{k})$, and thus $\mon(T,G)=O(e(G)^{k})$. The result then follows from \Cref{observation monomorphisms same as copies}.
	\end{proof}

	\section{Improvement for Paths - Proof of \texorpdfstring{Theorem~\ref{k-2 edges for paths}}{Theorem 1.10}}\label{section refinement for paths}
	
	In this section we prove the following which, analogous to how \Cref{theorem:treeBoundEG} implied \Cref{k-1 edges for trees} in the previous section, will imply \Cref{k-2 edges for paths}.
	
	\begin{theorem}\label{path reduce by 1}
		If $k,t\ge 3$ are integers with  $t\not \equiv 2 \mod k-1$, then any $\c{F}_{P_t,k+1}^q$-free graph $G$ has at most $O_q(e(G)^{k-1} v(G))$ copies of $P_t$.
	\end{theorem}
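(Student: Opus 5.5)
The plan is to follow the same route as for \Cref{theorem:treeBoundEG}, but with \Cref{piercing equals few copies} applied with $a=k-1$ and $b=1$ instead of $a=k$, $b=0$. Write $P_t=\hat{x}_1\hat{x}_2\cdots\hat{x}_t$. By \Cref{distinguishing large set of maps}, fix a distinguishing $\Phi\sub\Mon(P_t,G)$ with $|\Phi|=\Theta(\mon(P_t,G))$, and take $C\ge C_0(P_t,q)$ large. It then suffices to show, for every $\phi\in\Phi$, that $\c{T}_{\phi,\Phi,C}$ has a $(k-1,1,\phi,\Phi)$-pseudo-piercing set. If this holds only for all $\phi$ in one of boundedly many subclasses $\Phi'\sub\Phi$, determined by a finite ``type'' such as which pattern of intervals is highly extendable, I will apply \Cref{piercing equals few copies} to each subclass and sum. \Cref{observation monomorphisms same as copies} then converts the resulting bound $O(e(G)^{k-1}v(G))$ into a bound on copies of $P_t$.

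The first key step is combinatorial and concerns intervals. Here $\c{T}_{\phi,\Phi,C}$ is a family of subintervals of $P_t$, and it is closed upward: a superinterval of a highly extendable interval is again highly extendable, essentially because extensions of the smaller interval extend to the larger one. By \Cref{GT Pierces by Edges} the family is pierced by $k$ edges. Suppose it is not pierced by $k-1$ edges together with one vertex. Then the greedy procedure (sort the minimal intervals by right endpoint and place an edge at each uncovered right end) produces minimal members $I_1<I_2<\cdots<I_k$. In this configuration, consecutive $I_i,I_{i+1}$ are separated so that no single edge meets both, and also no single vertex meets both. Moreover, every interval lying in a gap between the greedy edges fails to be highly extendable. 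From this rigid picture I want to extract a set $R$ with $R\subsetneq V(P_t)$, made up of the gaps between the $I_i$, such that every component of $P_t-R$ lies inside a highly extendable interval. I will then count the lengths. The $k$ edges sit at positions forced by the minimal intervals, and the $k-1$ gaps between consecutive greedy edges have prescribed lengths. Tightness, meaning that one edge cannot be traded for a vertex, should force all gaps to have equal length $b$ and force the two ends to match. This gives $t=2+(k-1)b$, i.e.\ $t\equiv 2\pmod{k-1}$, contradicting the hypothesis. So outside this exact case there should be slack somewhere: either some $I_i$ can be pierced by a vertex shared with a neighbouring edge, or one end interval is short enough that the extra vertex covers it.

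The second key step, which I expect to be the main obstacle, is that slack alone may only give pseudo-piercing rather than genuine piercing. The trouble is the non-minimal intervals that straddle a greedy gap; these are exactly where the new notion is needed. For such an interval $T'$ I will fix $\widehat{W}=V(\widehat{E})\cup\{\hat{u}\}$. I want some $\hat{x}\in V(T')$ for which the set of images $\phi'(\hat{x})$, over $\phi'\in\Phi$ agreeing with $\phi$ on $\widehat{W}$, has size at most $t$. I would argue by contradiction. If more than $t$ images occur at every vertex of $T'$, then for a suitable subinterval, \Cref{lemma:sunflower} applied to these extensions yields $q$ extensions that agree on a set $R'$. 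I then amalgamate them with the $q$ copies coming from the other $k$ highly extendable pieces, exactly as in \Cref{claim each map is a monomorphism} and \Cref{claim amalgamation}. The result should be a copy of $(P_t)_R^q$ in which $P_t-R$ has at least $k+1$ components, contradicting $\c{F}_{P_t,k+1}^q$-freeness. The delicate points are the following. The pieces used in the amalgamation must have disjoint closed neighbourhoods, so that the neighbourhood sets avoid the other pieces, as in \Cref{claim each N(T) is in V_0}. And the count of more than $t$ images must be what guarantees that the sunflower kernel leaves a new component. I expect this to require a case split according to whether the slack from $t\not\equiv 2\pmod{k-1}$ sits at an end of the path or in an interior gap.
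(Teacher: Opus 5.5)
Your proposal has a genuine gap, in the second step. The first step's arithmetic is also unfounded.

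\textbf{The one-shot argument at a single map does not work.} You try to show, for one fixed distinguishing $\Phi$, that every $\phi$ has a $(k-1,1,\phi,\Phi)$-pseudo-piercing set, using a sunflower-and-glue argument at $\phi$. Splitting $\Phi$ into finitely many types does not weaken this, because \Cref{piercing equals few copies} still requires pseudo-piercing relative to the same $\Phi$ for every map in each class. The argument fails for three reasons.
\begin{enumerate}
\item Failure of pseudo-piercing only says that, at each vertex of some $T'$, the maps agreeing with $\phi$ on $\widehat{W}$ take more than $v(T)=t$ values. Since $q$ is arbitrary, this gives no $q$ petals differing at a prescribed vertex, so you have no control over the sunflower kernel.
\item Those maps agree with $\phi$ only on $\widehat{W}$, not on the neighbourhood of the subinterval. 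So they cannot be glued to $\phi$ as in \Cref{claim each map is a monomorphism}.
\item In a role-preserving amalgamation as in \Cref{claim amalgamation}, any piece that can be glued in is itself (up to the threshold) a highly extendable subtree for $\phi$, with closed neighbourhood disjoint from the others. Since $\c{T}_{\phi,\Phi,C}$ is pierced by $k$ edges (\Cref{GT Pierces by Edges}), at most $k$ such pieces exist, so the $(k+1)$-st component never appears.
\end{enumerate}

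\textbf{The congruence cannot come from interval combinatorics.} Your first step suggests that tightness forces $t=2+(k-1)b$, so that the congruence would yield genuine piercing. The paper explicitly notes that the single-map $(k-1,1)$-piercing analogue of \Cref{GT Pierces by Edges} is false. Combinatorially, the system of all intervals meeting $\{1,\dots,2k\}$ is pierced by $k$ edges but not by $k-1$ edges plus a vertex, for every $t\ge 2k$.

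\textbf{What is missing.} You need an iteration, and forbidden configurations that do not preserve roles. The paper proceeds as follows.
\begin{itemize}
\item It discards the pseudo-pierceable maps, which are few by \Cref{piercing equals few copies}. It then passes to a large subfamily, recomputes the highly extendable systems with larger thresholds, and repeats until the system stabilises. This gives the refinement sequence of \Cref{good sequence or few copies}: nested families $\Phi_t\sub\cdots\sub\Phi_1$ with one common system $\c{T}$, $C'$ disjoint extensions for each minimal member, and no map in $\Phi_i$ pseudo-pierced relative to $\Phi_{i-1}$.
\item From the $k$-edge piercing matching it extracts a nice tuple.
\item \Cref{ell2 equals 2} and \Cref{nice tuples dont have gaps} use shifted maps (vertex $d$ sent to an image of $d-1$ or $d-2$) to force $\ell_2=2$, $r_{2k-1}=t-1$ and no gaps.
\item The hypothesis $t\not\equiv 2\pmod{k-1}$ enters only in the pigeonhole of \Cref{nice tuples have a small tree}. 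It yields an interior $T_{i^*}$ with $|V(T_{i^*})|\le\frac{t-3}{k-1}-1$, that is, $b\le\frac{t-3}{k-1}$ as \Cref{theta implies F} requires.
\item In \Cref{nice tuples can zigzag}, failure of pseudo-piercing is used only to pick a fresh endpoint among at least $t$ candidates. The new map lies in the next level of the sequence, so the same system and the disjoint-extension property apply to it again. The $q$ paths come from that property, not from the $>t$ images. Iterating builds $\theta_{q,b,t}$ from copies of $\theta_{q,b}$ that alternately share endpoints.
\end{itemize}
The copies of $P_t$ inside this $\theta_{q,b,t}$ run through several copies of the same short theta, so vertices of $G$ play many different roles. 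An argument localised at a single $\phi$ cannot produce such a configuration.
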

	
	The main difficulty in proving \Cref{path reduce by 1} is that the natural analog of \Cref{GT Pierces by Edges} (i.e.\ that if some set $\c{T}_{\phi,\Phi,C}$ can not be pierced by $k-1$ edges and 1 vertex, then $G$ contains an element of $\c{F}_{P_t,k+1}^q$) is not true.  To get around this, we will instead prove that if $G$ contains many copies of $T$ and if some set $\c{T}_{\phi,\Phi,C}$ can not be \textit{pseudo-pierced} by $k-1$ edges and 1 vertex, then $G$ contains an element of $\c{F}_{P_t,k+1}^q$.
	
	\subsection{Refinement sequences and \texorpdfstring{$k$}{k}-nice tuples}
	
	We begin with a technical refinement of \Cref{distinguishing large set of maps}.  Given a subtree system $\mathcal{T}$, we say that a subtree $T'\in \c{T}$ is \textbf{minimal} if there exists no $T''\in \c{T}$ with $V(T'')\subsetneq V(T')$.
	
	\begin{definition}
		Let $G$ be a graph and $T$ a tree with $t:=v(T)$.  Given integers $C'\le C''$, we say that a sequence $(\Phi_{t},\ldots,\Phi_1)$ is an $(a,b,C',C'',T)$-\textbf{refinement sequence} for a graph $G$ if the following conditions hold:
		\begin{enumerate}[label=(S\arabic*)]
			\item We have $\Phi_t\sub \cdots \sub \Phi_1\sub \Mon(T,G)$ and each $\Phi_i$ is distinguishing.\label{condition good sequence nested and distinguishing}
			\item There exist integers $C'\le C_{1}\le C_{2}\cdots \le C_{t-1}\le C''$ and a $T$-subtree system $\c{T}$ such that $\c{T}_{\phi,\Phi_i,C_i}=\c{T}$ for all $\phi\in \Phi_{i+1}$.\label{condition good sequence T=T}
			\item For each $1\leq i\leq t-1$, every $\phi\in \Phi_{i+1}$ and every minimal subtree $T'\in \c{T}_{\phi,\Phi_i,C_i}=\c{T}$,  there exist maps $\psi^{(1)},\ldots,\psi^{(C')}\in \Psi(T',\phi(N(T'));\Phi_1)$ whose images are pairwise disjoint.\label{condition good sequence maps are pairwise disjoint for minimal}
			\item For each $i>1$ and every $\phi\in \Phi_{i}$, there does not exist an $(a,b,\phi,\Phi_{i-1})$-pseudo-piercing set for $\mathcal{T}$.\label{condition good sequence not pseudo-pierced}
			\item We have $|\Phi_t|\ge (C'')^{-1} \mon(T,G)$.\label{condition good sequence Phi_t not too small}
		\end{enumerate}
		For such a sequence we say that $\c{T}$ is the associated subtree system and that $C_1,\ldots,C_{t-1}$ are the associated integers. 
	\end{definition}
	Crucially, we can prove that every graph either has such a refinement sequence or few copies of $T$.
	
	\begin{lemma}\label{good sequence or few copies}
		For every tree $T$ and integers $a,b,C'$, there exists an integer $C''$ such that every graph $G$ either has an  $(a,b,C',C'',T)$-refinement sequence or has at most $C''e(G)^{a}v(G)^b$ copies of $T$.
	\end{lemma}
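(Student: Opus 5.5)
We will build the sequence greedily from $\Phi_1$ downward, with a pigeonhole over the finitely many $T$-subtree systems at each step. Whenever a step fails, we fall back on \Cref{piercing equals few copies}.

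\textbf{Start.} Use \Cref{distinguishing large set of maps} to get a distinguishing $\Phi_1\sub\Mon(T,G)$ with $|\Phi_1|\ge t^{-t}\mon(T,G)$. Choose in advance a rapidly increasing sequence of thresholds $C'\le D_1\le D_2\le\cdots$, depending only on $T$, $C'$ and $a,b$. In particular $D_1\ge \max_{T'}\Pow(T',C')$, so that \Cref{lemma:sunflower} applies to every subtree. Let $N$ be the number of subtree systems of $T$, at most $2^{2^t}$.

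\textbf{Step $i\to i+1$.} Given $\Phi_i$, split it into those $\phi$ for which $\c{T}_{\phi,\Phi_i,D_i}$ has an $(a,b,\phi,\Phi_i)$-pseudo-piercing set and the rest.
\begin{itemize}
\item If the first group contains at least half of $\Phi_i$, then \Cref{piercing equals few copies}, applied with $\Phi=\Phi_i$ and $C=D_i$, gives $|\Phi_i|=O_{D_i}(e(G)^av(G)^b)$. Hence $\mon(T,G)$ is small and, by \Cref{observation monomorphisms same as copies}, so is the number of copies.
\item Otherwise, pigeonhole the second group by the value of $\c{T}_{\phi,\Phi_i,D_i}$. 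This gives $\Phi_{i+1}$ of size at least $|\Phi_i|/(2N)$ on which the system is constant. Subsets of distinguishing sets stay distinguishing, so (S1) holds, and (S4) holds by construction.
\end{itemize}
Each step loses only a constant factor, so (S5) holds once $C''$ exceeds $(2N)^t t^t$ and all the $D_i$.

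\textbf{Making the system independent of $i$.} The systems $\c{T}^{(i)}$ obtained this way may depend on $i$, whereas (S2) demands one system $\c{T}$ together with thresholds $C_i$. To fix this, iterate more than $t$ times and keep only a window of $t-1$ consecutive steps on which the system is stable. Two monotonicity facts make this work. First, shrinking $\Phi$ only shrinks the extendable sets $\Psi$. Second, raising $C$ only shrinks $\c{T}_{\phi,\Phi,C}$. So along a pair of parameters that move monotonically, the system can change at most $2^t$-ish times. Running about $t\cdot 2^{t}$ steps with thresholds growing fast enough (for example $D_{i+1}\ge D_i$ large relative to the losses), pigeonhole yields $t-1$ consecutive indices with the same system. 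We reindex these as $\Phi_t\sub\cdots\sub\Phi_1$ and take $C_i$ to be the corresponding $D$'s.

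\textbf{Condition (S3).} For a minimal $T'\in\c{T}$ and $\phi\in\Phi_{i+1}$, we have $|\Psi(T',\phi(N(T'));\Phi_i)|\ge C_i\ge\Pow(T',C')$. Apply \Cref{lemma:sunflower} to get $C'$ maps with a common kernel $R\subsetneq V(T')$. Suppose $R\neq\emptyset$. Each component of $T'-R$ gives a proper subtree $T''$ of $T'$. The $C'$ petals are restrictions that agree on $N(T'')\sub R\cup N(T')$, so they witness a large $\Psi(T'',\cdot)$. Taking $D_i$ much larger than $\Pow$ and nesting the sunflower argument, this puts $T''$ in the system and contradicts minimality. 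Hence $R=\emptyset$, the images are pairwise disjoint, and $\Psi(\cdot;\Phi_i)\sub\Psi(\cdot;\Phi_1)$ gives (S3).

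\textbf{Main obstacle.} I expect the hardest part to be this last minimality argument together with the bookkeeping of thresholds. The extendable set for $T''$ must be measured in the right $\Phi_j$ and with the right threshold. This is exactly why the scheme needs a gap between $C_i$ and $C_{i+1}$ and a careful ordering of the constants.
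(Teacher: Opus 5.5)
Your overall architecture matches the paper's. You split off the maps whose system is pseudo-pierceable and fall back on \Cref{piercing equals few copies} when they form at least half. You pigeonhole the rest by subtree system, use the two monotonicity facts to see the chosen systems are nested, and extract a window of consecutive equal systems.

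The genuine gap is in (S3), which you correctly flag as the main obstacle but do not resolve. Your argument stays inside a single level. From $|\Psi(T',\phi(N(T'));\Phi_i)|\ge C_i\ge \Pow(T',C')$ you obtain only $C'$ petals. So when the kernel $R$ is nonempty, they only show that a component $T''$ of $T'-R$ has at least $C'$ extendable maps. For $T''$ to lie in $\mathcal{T}=\mathcal{T}_{\psi,\Phi_i,C_i}$ you need $|\Psi(T'',\psi(N(T''));\Phi_i)|\ge C_i\gg C'$.

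Enlarging $D_i$ does not help, because it raises the supply of maps and the membership threshold simultaneously. What is needed is a threshold for $T''$ that is smaller than the one for $T'$ by a sunflower factor. ``Nesting'' the sunflower lemma within one level cannot provide that.

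A second, related problem is \emph{where} membership of $T''$ is tested. You cannot test at $\phi$, since $N(T'')$ may meet $R$, where $\phi$ need not agree with the petals. The petal extensions $\phi^{(p)}$ lie only in $\Phi_i$. But (S2) identifies $\mathcal{T}_{\psi,\Phi_j,C_j}$ with $\mathcal{T}$ only for $\psi\in\Phi_{j+1}$, so at level $j=i$ you know nothing about the systems of these maps.

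The missing idea is a comparison between two adjacent levels of the stable window, driven by the recursion $D_{j+1}\ge \max_{T'\subseteq T}\Pow(T',D_j)$. This is exactly the role of the paper's choice $C'_{i+1}:=\max_{T'}\Pow(T',C'_i)$. Concretely:
\begin{enumerate}
\item From $|\Psi(T',\phi(N(T'));\Phi_i)|\ge C_i\ge \Pow(T',C_{i-1})$ you get $C_{i-1}$ petals. Their extensions $\phi^{(p)}\in\Phi_i$ agree pointwise on $N(T'')\subseteq N(T')\cup R$.
\item If $R\neq\emptyset$, this gives $|\Psi(T'',\phi^{(1)}(N(T''));\Phi_{i-1})|\ge C_{i-1}$.
\item By (S2) at level $i-1$, this forces $T''\in\mathcal{T}_{\phi^{(1)},\Phi_{i-1},C_{i-1}}=\mathcal{T}$. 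The step is legitimate because $\phi^{(1)}\in\Phi_i$, and it contradicts the minimality of $T'$.
\item Hence $R=\emptyset$, and you obtain $C_{i-1}\ge C'$ pairwise disjoint maps in $\Psi(T',\phi(N(T'));\Phi_1)$.
\end{enumerate}
For $i=1$ this uses one extra stable level below the window, so take the window one level longer and discard its bottom set.

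Two smaller bookkeeping points:
\begin{itemize}
\item The constant in (S5) must absorb the factor-$2N$ losses over all (roughly $t2^t$) iterations performed before the window is located, not just $t$ of them.
\item Nestedness $\mathcal{T}^{(i+1)}\subseteq\mathcal{T}^{(i)}$ requires a common witness $\phi^*\in\Phi_{i+2}$ for both systems. Such a witness exists because these sets are nonempty outside the few-copies case.
\end{itemize}
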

	
	\begin{proof}
		If $\Mon(T,G)$ is empty then trivially $G$ has at most $C'' e(G)^a v(G)^b$ copies of $T$,  so we assume $\Mon(T,G)$ is non-empty. Let $\Phi'_1$ be the large family of distinguishing monomorphisms guaranteed from \Cref{distinguishing large set of maps} and let $C'_1:=C'$.  Iteratively given that we have defined $\Phi'_i,C'_i$, we define the following set of objects:
		\begin{itemize}
			\item Let $\Psi_i'\sub \Phi'_i$ be the set of maps $\phi$ such that $\c{T}_{\phi,\Phi_{i}',C_{i}'}$ has an $(a,b,\phi,\Phi'_{i})$-pseudo-piercing set and let $\Phi''_i=\Phi'_i\sm \Psi_i'$.
			\item For a subtree system $\c{T}$, let $\Phi''_{i,\c{T}}\sub \Phi''_i$ denote the set of $\phi$ which have $\c{T}_{\phi,\Phi_i',C_i'}=\c{T}$.
			\item By the pigeonhole principle, there exists some $\c{T}$ such that $|\Phi''_{i,\c{T}}|\ge 2^{-2^{v(T)}}|\Phi''_i|$.  We let $\c{T}_i$ be any such family and define $\Phi'_{i+1}:=\Phi''_{i,\c{T}_i}$, and $C'_{i+1}:=\max_{T'\subseteq T}\Pow(T',C'_i)$, where the maximum goes over all subtrees of $T$, and $\Pow(T',C'_i)$ is the constant guaranteed by \Cref{lemma:sunflower}.
		\end{itemize}

		We observe the following.
		
		\begin{claim}\label{claim the the subtrees are nested}
			If $\Phi''_{i+1}\ne \emptyset$ then $\c{T}_{i+1}\sub \c{T}_i$.
		\end{claim}
		
		\begin{proof}
			By definition, we have
			\begin{equation}\label{equation containment of Phi i+1 and Phi i}
				\Phi''_{i+1,\mathcal{T}_{i+1}}\subseteq \Phi''_{i+1}\subseteq \Phi'_{i+1}=\Phi''_{i,\mathcal{T}_i}.
			\end{equation}
			By assumption we have $\Phi''_{i+1}\neq \emptyset$, which further implies that $\Phi''_{i+1,\mathcal{T}_{i+1}}\neq \emptyset$, so there exists some $\phi^*\in \Phi''_{i+1,\mathcal{T}_{i+1}}$. By~\eqref{equation containment of Phi i+1 and Phi i}, we also have $\phi^*\in \Phi''_{i,\mathcal{T}_i}$. Thus, by definition, $\c{T}_{i+1}=\c{T}_{\phi^*,\Phi'_{i+1},C'_{i+1}}$ and $\mathcal{T}_i=\c{T}_{\phi^*,\Phi'_i,C'_i}$. 
			
			Observe that for any arbitrary sets of monomorphisms $\Psi'\sub \Psi$ and integers $p'\ge p$, we have $\c{T}_{\psi,\Psi',p'}\sub \c{T}_{\psi,\Psi,p}$ for any $\psi\in\Psi'$ simply by definition of these subtree systems. Using this, we note that since $\Phi'_{i+1}\subseteq \Phi'_i$ and $C'_{i+1}\geq C'_i$, we have that $\c{T}_{\phi^*,\Phi'_{i+1},C'_{i+1}}\sub \c{T}_{\phi^*,\Phi'_i,C'_i}$. Putting everything together, we have
			\[
			\c{T}_{i+1}=\c{T}_{\phi^*,\Phi'_{i+1},C'_{i+1}}\sub \c{T}_{\phi^*,\Phi'_i,C'_i}=\c{T}_i.
			\]
			
		\end{proof}
		
		We also seek to understand the sizes of the $\Phi'_i$ sets.
		
		\begin{claim}\label{claim Phi does not get too small too fast}
			If $i$ is such that $|\Phi'_j|\ge 2 |\Psi_j'|$ for all $1\le j\le i$, then $|\Phi_{i+1}'|\ge 2^{-i-i2^{v(T)}} |\Phi'_1|$. 
		\end{claim}
		
		\begin{proof}
			Note that if $|\Phi'_j|\ge 2 |\Psi_j'|$ then $|\Phi''_j|\geq 2^{-1}|\Phi'_j|$, and so
			\[
			|\Phi_{j+1}'|=|\Phi''_{j,\c{T}_j}|\ge 2^{-2^{v(T)}} |\Phi''_j|\ge 2^{-2^{v(T)}}\cdot 2^{-1} |\Phi'_j|.
			\]
			The stated bound then follows by induction.
		\end{proof}
		
		Now if there's some $1\le i \le v(T)2^{2^{v(T)}}$ with $|\Phi'_i|<2|\Psi_i'|$, say $i'$ is the smallest such $i$, then because $|\Psi_{i'}|=O(e(G)^{a}v(G)^b)$ by \Cref{piercing equals few copies} and 
		\[
		|\Phi'_{i'}|\ge 2^{-(i'-1)-(i'-1)2^{v(T)}} |\Phi'_1|\geq 2^{-v(T)2^{2^{v(T)}}(1+2^{v(T)})}|\Phi'_1|
		\]
		by the minimality of $i'$ and \Cref{claim Phi does not get too small too fast}; and because $|\Phi_1'|=\Omega(\mon(T,G))$ by construction, we in total conclude that 
		\[
		\mon(T,G)=O(|\Phi_1'|)=O(|\Phi'_{i'}|)=O(|\Psi'_{i'}|)=O(e(G)^{a}v(G)^b).
		\]
		Let $C^*$ be an integer large enough that $\mathrm{mon}(T,G)\leq C^*e(G)^{a}v(G)^b$ in this case. Then as long as we choose $C''\geq C^*$, $G$ has few copies of $T$. As such, we may assume that no such $i'$ exists.
		
		With the above and Claim~\ref{claim Phi does not get too small too fast}, have that 
		\begin{equation}\label{equation Phi_i' is not too small}
			|\Phi'_i|\geq 2^{-(i-1)-(i-1)2^{v(T)}}|\Phi'_1|=\Omega(\mon(T,G))
		\end{equation}
		for all $1\le i\le v(T) 2^{2^{v(T)}}$.  Since all of these sets are in particular non-empty, by \Cref{claim the the subtrees are nested} and the pigeonhole principle there exists some $i^*<v(T)2^{2^{v(T)}}$ and $T$-subtree system $\c{T}$ such that $\c{T}_{i^*}=\c{T}_{i^*+1}\cdots=\c{T}_{i^*+v(T)-1}=\c{T}$.  We aim to prove the result with $\Phi_j:=\Phi'_{i^*+j-1}$, $C_j:=C'_{i^*+j-1}$ for $1\leq j\leq v(T)$. Towards this, let $C^{**}$ be the implicit constant in~\eqref{equation Phi_i' is not too small} applied with $i:=v(T) 2^{2^{v(T)}}$, and set $C'':=\max\left\{C^*,1/C^{**},C_{v(T) 2^{2^{v(T)}}}'\right\}$.
		
		\ref{condition good sequence nested and distinguishing} follows from the construction of the $\Phi'_i$'s, and the fact that $\Phi_1'$ is distinguishing. Similarly, \ref{condition good sequence T=T} follows from construction and the choices of the $\Phi_i$'s, while \ref{condition good sequence not pseudo-pierced} follows from the choice of the $\Psi_i'$'s. \ref{condition good sequence Phi_t not too small} follows since $|\Phi_{v(T)}|=|\Phi'_{i^*+v(T)-1}|\geq C^{**}\mon(T,G)\geq (C'')^{-1}\mon(T,G)$. Thus, let us focus on \ref{condition good sequence maps are pairwise disjoint for minimal}.
		
		Fix $i$ with $1\leq i\leq v(T)-1$, let $\phi\in \Phi_{i+1}$ and $T'\in \c{T}_{\phi,\Phi_i,C_i}$ a minimal element. Since $\c{T}_{\phi,\Phi_i,C_i}=\mathcal{T}=\c{T}_{\phi,\Phi_{i+1},C_{i+1}}$, we also have $T'\in \c{T}_{\phi,\Phi_{i+1},C_{i+1}}$, which by definition means that $|\Psi(T',\phi(N(T'));\Phi_{i+1})|\geq C_{i+1}\ge \Pow(T',C_i)$.  By \Cref{lemma:sunflower}, there exists a set $R\subsetneq V(T')$ and distinct maps $\psi^{(1)},\ldots,\psi^{(C_i)}\in \Psi(T',\phi(N(T'));\Phi_{i+1})$ which all agree on $R$ and whose images are pairwise disjoint outside of $R$. If $R=\emptyset$, then the images of the $\psi_i$'s are pairwise disjoint, and since $\Psi(T',\phi(N(T'));\Phi_{i+1})\subseteq \Psi(T',\phi(N(T'));\Phi_{1})$,~\ref{condition good sequence maps are pairwise disjoint for minimal} is satisfied. Thus, let us assume $R\neq \emptyset$. 
		
		By definition of $\Psi(T',\phi(N(T'));\Phi_{i+1})$, for each $1\le p\le C_i$, there exists $\phi^{(p)}\in \Phi_{i+1}$ such that $\phi^{(p)}|_{V(T')}=\psi^{(p)}$ and such that $\phi^{(p)}(N(T'))=\phi(N(T'))$. Let $T''$ denote an arbitrary connected component of $T'-R$, and note that $N(T'')\sub N(T')\cup R$, and as such $\phi^{(p)}(N(T''))=\phi^{(p')}(N(T''))$ for all $p,p'$. This gives us that $\phi^{(p)}|_{V(T'')}\in \Psi(T'',\phi^{(1)}(N(T''));\Phi_{i+1})\subseteq \Psi(T'',\phi^{(1)}(N(T''));\Phi_i)$, and in fact the images of the $\phi^{(p)}|_{V(T'')}$'s are pairwise disjoint by our choice of $T''$, and thus $|\Psi(T'',\phi_1(N(T''));\Phi_i)|\geq C_i$.  This means $T''\in \mathcal{T}_{\phi^{(1)},\Phi_i,C_i}=\mathcal{T}$, a contradiction to $T'$ being a minimal element. We conclude the result.
	\end{proof}
	
	The result above holds for arbitrary trees $T$ and may be useful to further refinements of \Cref{k-1 edges for trees}.  For the specific case of paths, we can derive a convenient corollary of \Cref{good sequence or few copies} in terms of another technical condition.  For this, here and throughout we assume the path graph $P_t$ to have $V(P_t)=[t]$, meaning that subtrees of $P_t$ all have vertex sets which are some interval $[\ell,r]$ for some choice of $\ell,r$.
	
	\begin{definition}
		Let $\mathcal{T}$ be a $P_t$-subtree system, let $T_1,T_2,\dots,T_{2k}\in\mathcal{T}$, and let $\widehat{M}\subseteq E(P_t)$ be a matching of size $k$ in $P$, say with $\widehat{M}=\{\hat{x}_1\hat{x}_2,\hat{x}_3\hat{x}_4,\dots,\hat{x}_{2k-1}\hat{x}_{2k}\}$ where $\hat{x}_i<\hat{x}_j$ whenever $i<j$. For each $i\in [2k]$, let $T_i=[\ell_i,r_i]$. We say the tuple $(T_1,T_2,\dots,T_{2k})$ is a \textbf{$(k,t,\mathcal{T})$-nice tuple induced by $\widehat{M}$} if the following holds.
		\begin{enumerate}[label=(N\arabic*)]
			\item $\hat{x}_i\in V(T_i)\sm \bigcup_{j\neq i} V(T_j)$ for all $i\in [2k]$,\label{nice SDR}
			\item Each $T_i$ is minimal in $\mathcal{T}$. \label{nice minimal}
		\end{enumerate}
		We call  $\widehat{M}$ the matching associated with the tuple $(T_1,T_2,\dots,T_{2k})$. We say a $(k-1,1,C',C'',P_t)$-refinement sequence $(\Phi_t,\dots,\Phi_1)$ with associated subtree system $\mathcal{T}$ is \textbf{$k$-nice} if there exists a matching $\widehat{M}\subseteq E(P_t)$ of size $k$ and a $(k,t,\mathcal{T})$-nice tuple induced by $\widehat{M}$.
	\end{definition}
	
	It is worth noting that given a $P_t$-subtree system and a matching $\widehat{M}$ of size $k$, if a $(k,t,\mathcal{T})$-nice tuple induced by $\widehat{M}$ exists, it is unique. This will not be necessary to our application of the definition though, so we omit a proof. From this point forward, if we fix a $(k,t,\mathcal{T})$-nice tuple $(T_1,T_2,\dots,T_{2k})$, we will always let $\ell_i,r_i\in [2k]$ be integers such that $T_{i}=[\ell_i,r_i]$.
	
	\begin{lemma}\label{lemma few copies implies k-nice refinement sequence}
		For all integers $q,k,t$, there exists an integer $C_0$ such that for all $C'\geq C_0$, and $C''$ as guaranteed by \Cref{good sequence or few copies} at least one of the following holds for every graph $G$:
		\begin{itemize}
			\item $G$ contains at most $O(e(G)^{k-1} v(G))$ copies of $P_t$.
			\item $G$ contains a subgraph isomorphic to an element of $\c{F}_{P_t,k+1}^q$.
			\item $G$ contains a $k$-nice $(k-1,1,C',C'',P_t)$-refinement sequence. 
		\end{itemize}
	\end{lemma}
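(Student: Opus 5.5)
Apply \Cref{good sequence or few copies} with $a=k-1$, $b=1$, $T=P_t$ and the given $C'$. If $G$ has at most $C''e(G)^{k-1}v(G)$ copies of $P_t$, the first alternative holds. Otherwise we get a $(k-1,1,C',C'',P_t)$-refinement sequence $(\Phi_t,\dots,\Phi_1)$ with associated subtree system $\mathcal{T}$ and integers $C_i$. The task is to show that if this sequence is not $k$-nice, then $G$ contains a member of $\c{F}_{P_t,k+1}^q$.

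\textbf{Step 1: edge-piercing.} Choose $C_0\ge \max_{T'}\Pow(T',q)$, and note that $C_i\ge C'\ge C_0$. Fix $\phi\in\Phi_2$, so that $\mathcal{T}=\c{T}_{\phi,\Phi_1,C_1}$. The amalgamation argument of \Cref{GT Pierces by Edges} applies verbatim to $\Phi_1$ (distinguishing) and $C_1\ge C_0$. Hence either $G$ contains an element of $\c{F}_{P_t,k+1}^q$, or $\mathcal{T}$ can be pierced by a set of at most $k$ edges of $P_t$.

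\textbf{Step 2: reduce to minimal intervals.} We may assume no $(k-1,1)$-piercing set exists. Such a set would in particular be an $(k-1,1,\phi,\Phi_1)$-pseudo-piercing set for every $\phi\in\Phi_2$, contradicting \ref{condition good sequence not pseudo-pierced}. Let $\mathcal{T}_{\min}$ be the minimal elements of $\mathcal{T}$. Any set of vertices piercing $\mathcal{T}_{\min}$ pierces $\mathcal{T}$, since every element of $\mathcal{T}$ contains a minimal one. So $\mathcal{T}_{\min}$ is pierced by $k$ edges but by no $k-1$ edges plus a vertex.

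The minimal intervals form an antichain, so their left endpoints and right endpoints are both strictly increasing. Run the greedy interval-piercing algorithm: repeatedly take the leftmost unpierced minimal interval and place a vertex at its right endpoint. This yields points $p_1<\dots<p_s$ and pairwise disjoint witness intervals $J_1,\dots,J_s$. Since $\mathcal{T}_{\min}$ is not pierceable by $2k-1$ vertices (which $k-1$ edges plus a vertex would supply), we get $s\ge 2k$. Since $k$ edges supply only $2k$ vertices, $s=2k$.

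\textbf{Step 3: build the matching, or find a contradiction.} This is the delicate step. If some consecutive pair $p_{2i-1},p_{2i}$ had $p_{2i}=p_{2i-1}+1$ for every $i$, the edges $p_{2i-1}p_{2i}$ would form a matching. We want to choose $\hat x_{2i-1}\hat x_{2i}$ inside $J_{2i-1}\cup J_{2i}$ with each $\hat x_j$ lying only in $J_j$ among the chosen $2k$ intervals, giving \ref{nice SDR}; minimality gives \ref{nice minimal}.

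The existence of a $k$-edge piercing set $\widehat E$ forces this structure. Each edge of $\widehat E$ can cover at most two of the disjoint intervals $J_j$, and only two consecutive ones. So $\widehat E$ pairs up $J_{2i-1},J_{2i}$ with an edge $\hat y\hat z$, $\hat y\in J_{2i-1}$, $\hat z\in J_{2i}$. Take $\hat x_{2i-1}=\hat y$ and $\hat x_{2i}=\hat z$. Since the $J$'s are disjoint, \ref{nice SDR} holds. The edges are disjoint, because distinct pairs of $J$'s are disjoint, so they form a matching of size $k$, ordered as required.

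\textbf{Main obstacle.} I expect the hard part to be the case where the greedy argument gives only $s\le 2k-1$. Then $\mathcal{T}$ can be pierced by $2k-1$ vertices, but not by $k-1$ edges plus a vertex, because the required adjacencies fail. This is where pseudo-piercing and the hypotheses \ref{condition good sequence maps are pairwise disjoint for minimal} (pairwise disjoint extensions of minimal subtrees) must be used. The goal in that case is to amalgamate disjoint extensions of the minimal intervals, in the style of \Cref{GT Pierces by Edges}, into a copy of some $(P_t)_R^q$ with $P_t-R$ having $k+1$ components, or else to exhibit a pseudo-piercing set and contradict \ref{condition good sequence not pseudo-pierced}. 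I would first try to show directly that $s\ge 2k$, using that vertices serving as single piercers of adjacent disjoint minimal intervals can be paired into edges.
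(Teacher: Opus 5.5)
\textbf{The gap: the claim $s\ge 2k$ is unjustified.} Your Step 1 is fine, and so is the first half of Step 2. Getting a piercing set of at most $k$ edges from \Cref{GT Pierces by Edges}, ruling out a $(k-1,1)$-piercing set via \ref{condition good sequence not pseudo-pierced}, and passing to minimal elements are all sound. The argument breaks at the claim $s\ge 2k$, because the parenthetical has the implication backwards. A $(k-1,1)$-piercing set is a \emph{particular} kind of $2k-1$ vertices, so ``not $(k-1,1)$-pierceable'' does not imply ``not pierceable by $2k-1$ vertices.''

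The bad case genuinely occurs. Take $P_6$, $k=2$, and the antichain $\{[1,1],[2,4],[3,5],[6,6]\}$. It is pierced by the edges $\{1,2\},\{5,6\}$. It is not pierced by any edge plus a vertex: such a set must contain $1$ and $6$, so it is $\{1,2,6\}$ or $\{1,5,6\}$, and these miss $[3,5]$ or $[2,4]$ respectively. Yet it is pierced by the three vertices $1,3,6$, so the greedy gives $s=3=2k-1$. A nice tuple still exists here: $T_1=[1,1]$, $T_2=[2,4]$, $T_3=[3,5]$, $T_4=[6,6]$.

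Overlaps such as $V(T_2)\cap V(T_3)\ne\emptyset$ are exactly what \ref{nice observation trees intersection} allows, and what \Cref{nice tuples can zigzag} later exploits. So your Step 3 covers only the case $s=2k$. Your proposed remedy, proving $s\ge 2k$ or treating $s\le 2k-1$ via \ref{condition good sequence maps are pairwise disjoint for minimal}, amalgamation and pseudo-piercing, aims at something that is false in general and not needed.

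\textbf{The missing idea.} Condition \ref{nice SDR} does not ask the $T_i$ to be disjoint. It only asks that $\hat x_i$ lie in $T_i$ and in no other $T_j$, and this follows purely combinatorially from your Steps 1 and 2.
\begin{enumerate}[label=(\roman*)]
\item Let $\widehat M$ be the piercing set of at most $k$ edges. If $|\widehat M|<k$, or two of its edges share a vertex, you get a $(k-1,1)$-piercing set. In the second case, replace a run of $j\ge 2$ consecutive edges on $a,\dots,a+j$ by the $j-1$ edges on $a+1,\dots,a+j$ together with the vertex $a$. Hence $\widehat M$ is a matching of size $k$ with vertices $\hat x_1<\dots<\hat x_{2k}$.
\item Suppose that for some $i$, every member of $\mathcal{T}$ containing $\hat x_i$ also contained another $\hat x_j$. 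Then $V(\widehat M)\setminus\{\hat x_i\}=V(\widehat M\setminus\{\hat x_i\hat y_i\})\cup\{\hat y_i\}$ would be a $(k-1,1)$-piercing set.
\item So for each $i$ some $T'\in\mathcal{T}$ satisfies $V(T')\cap V(\widehat M)=\{\hat x_i\}$. Take $T_i$ minimal among such subtrees; this gives \ref{nice SDR}.
\item For \ref{nice minimal}, let $T''\in\mathcal{T}$ with $V(T'')\subsetneq V(T_i)$. It is pierced by $V(\widehat M)$ and can only meet it in $\hat x_i$, which contradicts the choice of $T_i$. Equivalently, in your setup, pick $T_i\in\mathcal{T}_{\min}$ meeting $V(\widehat M)$ exactly in $\hat x_i$.
\end{enumerate}
This is the paper's argument, and it needs no further graph-theoretic input.
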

	
	\begin{proof}
		We will prove this result with $C_0$ equal to the constant from \Cref{GT Pierces by Edges}. Let $C'\geq C_0$, and let us assume $G$ is $\c{F}_{P_t,k+1}^q$-free. By \Cref{good sequence or few copies}, either $G$ has $O(e(G)^{k-1}v(G))$ copies of $P_t$, in which case we are done, or $G$ has a $(k-1,1,C',C'',P_t)$-refinement sequence, say $(\Phi_{t},\ldots,\Phi_1)$ with associated subtree system $\c{T}$ and associated integers $C_1,\dots,C_{t-1}$. If $\mathcal{T}$ has a $(k-1,1)$-piercing set, then for every $\phi\in \Phi_t$, this $(k-1,1)$-piercing set is also a $(k-1,1,\phi,\Phi_{t-1})$-pseudo-piercing set for $\mathcal{T}_{\phi,\Phi_{t-1},C_{t-1}}$.  By \Cref{piercing equals few copies} applied with $\Phi=\Phi_{t-1}$ and $ \Phi'=\Phi_t$, we have $|\Phi_t|=O(e(G)^{k-1} v(G))$.  This along with \ref{condition good sequence Phi_t not too small} gives  $\mon(P_t,G)=O(e(G)^{k-1} v(G))$ and we are done. Thus, let us assume that no such $(k-1,1)$-piercing set exists.
		
		Because $\c{T}=\c{T}_{\phi,\Phi_{t-1},C_{t-1}}$ and $C_{t-1}\ge C'\ge C_0$, we have from \Cref{GT Pierces by Edges} that $\c{T}$ can be pierced by some set $\widehat{M}$ of $m\le k$ edges, say 
		\[
		\widehat{M}=\{\hat{x}_1\hat{x}_2,\hat{x}_3\hat{x}_4,\ldots,\hat{x}_{2m-1}\hat{x}_{2m}\},
		\]
		where without loss of generality we may assume $\hat{x}_1\le \hat{x}_2\le \cdots \le \hat{x}_{2m}$. In fact, it is not difficult to see that we must have $m=k$ and $\hat{x}_1<\hat{x}_2<\dots<\hat{x}_{2k}$ (so $\widehat{M}$ is a matching with $k$ edges) since otherwise $\mathcal{T}$ could be $(k-1,1)$-pierced. For each $i\in [2k]$, let $\hat{y}_i$ denote the vertex matched to $\hat{x}_i$ by $\widehat{M}$ (so $\hat{y}_i=\hat{x}_{i-1}$ if $i$ is even, and $\hat{y}_i=\hat{x}_{i+1}$ if $i$ is odd).
		
		\begin{claim}
			For each $i\in [2k]$, there exists at least one subtree $T'\in\mathcal{T}$ such that $T'$ contains $\hat{x}_i$ and does not contain $\hat{x}_j$ for any $j\neq i$.
		\end{claim}
		
		\begin{proof}
			If there is no such tree $T'$, then $\widehat{M}\setminus \{\hat{x}_i\hat{y}_i\}$ along with the vertex $\hat{y}_i$ constitutes a $(k-1,1)$-piercing set for $\mathcal{T}$, which we assumed does not exist.
		\end{proof}
		
		Now, for each $i\in [2k]$, let $T_i$ be a minimal tree in $\mathcal{T}$ such that $T_i$ contains $\hat{x}_i$ and does not contain $\hat{x}_j$ for any $j\neq i$. Then $(T_1,T_2,\dots,T_{2k})$ is a $(k,t,\mathcal{T})$-nice tuple induced by $\widehat{M}$ by design. Thus, $(\Phi_t,\dots,\Phi_1)$ is a $k$-nice $(k-1,1,C',C'',P_t)$-refinement sequence.
	\end{proof}
	
	For the rest of the proof we aim to show that nice tuples imply the existence of subgraphs from $\c{F}_{P_t,k+1}^q$ through some case analysis. First, let us gather a few properties of $(k,t,\mathcal{T})$-nice tuples. 
	
	\begin{observation}\label{observation properties of nice tuples}
		Let $\mathcal{T}$ be a $P_t$-subtree system and let $(T_1,T_2,\dots,T_{2k})$ be a $(k,t,\mathcal{T})$-nice tuple with associated matching $\widehat{M}=\{\hat{x}_1\hat{x}_2,\dots,\hat{x}_{2k-1}\hat{x}_{2k}\}$, where $\hat{x}_1<\hat{x}_2<\dots<\hat{x}_{2k}$. Then all the following hold.
		\begin{enumerate}[label=(O\arabic*)]
			\item $\ell_1<\ell_2<\dots<\ell_{2k}$ and $r_1<r_2<\dots<r_{2k}$.\label{nice observation trees in order}
			\item If $i<j$ then $\hat{x}_i<\ell_j\leq r_j$ and $\ell_i\leq r_i<\hat{x}_j$.\label{nice observation order of x and l and r}
			\item If $i$ is odd, then $r_i=\hat{x}_i$ and if $i$ is even, $\ell_i=\hat{x}_i$. In particular, if $i$ is odd, $r_i+1=\ell_{i+1}$.\label{nice observation endpoints are x}
			\item If $i<j-1$, then $r_i<\ell_j-1$.\label{nice observation far apart}
			\item If $i<j$ and if $V(T_i)\cap V(T_j)\neq \emptyset$, then $j=i+1$ and $i$ is even.\label{nice observation trees intersection}
		\end{enumerate}
	\end{observation}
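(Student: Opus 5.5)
Each of these properties follows directly from \ref{nice SDR} and \ref{nice minimal}, together with the fact that every subtree of $P_t$ is an interval $[\ell,r]$. I would prove them in the order \ref{nice observation endpoints are x}, then \ref{nice observation order of x and l and r}, then \ref{nice observation trees in order}, and deduce \ref{nice observation far apart} and \ref{nice observation trees intersection} last.

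The basic fact is that an interval containing $\hat{x}_i$ but no other $\hat{x}_j$ must satisfy $\hat{x}_{i-1}<\ell_i\le \hat{x}_i\le r_i<\hat{x}_{i+1}$, with the outer constraints dropped when $i=1$ or $i=2k$. This already gives \ref{nice observation order of x and l and r}: if $i<j$ then $\hat{x}_i\le \hat{x}_{j-1}<\ell_j$, and symmetrically $r_i<\hat{x}_{i+1}\le \hat{x}_j$. Together with $\hat{x}_i<\hat{x}_j$, this gives \ref{nice observation trees in order}, since $\ell_i\le \hat{x}_i<\ell_j$ and $r_i<\hat{x}_j\le r_j$.

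For \ref{nice observation endpoints are x}, take $i$ odd. Then $\hat{x}_{i+1}=\hat{x}_i+1$ because $\hat{x}_i\hat{x}_{i+1}$ is an edge of the path. So $\hat{x}_i\le r_i<\hat{x}_i+1$, which forces $r_i=\hat{x}_i$. The even case is symmetric: $\hat{x}_{i-1}=\hat{x}_i-1<\ell_i\le\hat{x}_i$ forces $\ell_i=\hat{x}_i$. Consequently $r_i+1=\hat{x}_i+1=\hat{x}_{i+1}=\ell_{i+1}$. Minimality \ref{nice minimal} is not needed for any of these facts.

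For \ref{nice observation far apart}, suppose $i<j-1$, so $i+1$ lies strictly between $i$ and $j$. Then $r_i<\hat{x}_{i+1}<\ell_j$ by the basic fact and \ref{nice observation order of x and l and r}, and since these are integers, $r_i\le \ell_j-2$. For \ref{nice observation trees intersection}, if $V(T_i)\cap V(T_j)\ne\emptyset$ with $i<j$, then \ref{nice observation far apart} rules out $j\ge i+2$, so $j=i+1$. If $i$ were odd, \ref{nice observation endpoints are x} would give $r_i<\ell_{i+1}$, so the intervals would be disjoint. Hence $i$ is even.

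The only step needing care is using the adjacency of matched vertices in \ref{nice observation endpoints are x}; everything else is routine ordering of integers.
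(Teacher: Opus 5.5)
Your proof is correct and follows essentially the same route as the paper: the interval structure of subtrees of $P_t$ together with (N1) gives (O1) and (O2), the adjacency $\hat{x}_{i+1}=\hat{x}_i+1$ for odd $i$ gives (O3), and (O4) and (O5) then follow from these. The only differences are cosmetic. You phrase (N1) through the neighbouring points $\hat{x}_{i\pm 1}$, whereas the paper argues by contradiction against an arbitrary $\hat{x}_j$. Also, the order you announce, (O3) then (O2) then (O1), is not the order in which you actually write the arguments, though this is harmless since (O3) only uses your basic fact.
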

	
	\begin{proof}
		Let $i<j$. Then by \ref{nice SDR}, we have $\ell_i\leq \hat{x}_i<\hat{x}_j\leq r_j$. If we had $\ell_j\leq \hat{x}_i$, this would give us  that $\hat{x}_i\in V(T_j)$, contradicting \ref{nice SDR}, so $\ell_i\leq \hat{x}_i<\ell_j\leq r_j$. By a symmetric argument, we can get that $\ell_i\leq r_i<\hat{x}_j\leq r_j$. These together establish both \ref{nice observation trees in order} and \ref{nice observation order of x and l and r}.
		
		Now, if $i$ is odd, note that $\hat{x}_i\hat{x}_{i+1}\in \widehat{M}$, so in particular $\hat{x}_{i+1}=\hat{x}_i+1$. Then by \ref{nice SDR} $T_i$ contains $\hat{x}_i$ but not $\hat{x}_{i+1}$, while $T_{i+1}$ contains $\hat{x}_{i+1}$ but not $\hat{x}_i$, so we must have $r_i=\hat{x}_i$ and $\ell_{i+1}=\hat{x}_{i+1}$, giving \ref{nice observation endpoints are x}.
		
		Now assume $i<j-1$. then by \ref{nice observation order of x and l and r}, $r_i<\hat{x}_{i+1}\leq \hat{x}_{j-1}<\ell_{j}$, so indeed $r_i<\ell_j-1$, giving \ref{nice observation far apart}.
		
		Finally, if $i<j$, and $V(T_i)\cap V(T_j)\neq \emptyset$, then $r_i\geq \ell_j$, so by \ref{nice observation far apart}, we must have $j=i+1$, and by \ref{nice observation endpoints are x} we could not have $r_i\geq \ell_{i+1}$ if $i$ was odd, so $i$ is even, giving \ref{nice observation trees intersection}.
	\end{proof}
	
	In what follows, to avoid cumbersome language, we may use some of the properties listed above without evoking a reference to \Cref{observation monomorphisms same as copies} unless we believe such a reference is necessary for clarity.
	
	\subsection{Using \texorpdfstring{$k$}{k}-nice tuples}
	
	We wish to show that $k$-nice refinement sequences imply that $G$ contains an element of $\mathcal{F}^q_{P_t,k+1}$.  In each case, the element of $\mathcal{F}^q_{P_t,k+1}$ we find will be close to a generalized theta graph $\theta_{a,b,c}$, which we recall is the graph obtained by taking $a$ paths on $1+cb$ vertices which intersect at the vertices $1,1+b,\ldots,1+cb$.  In particular, our first case relies on the following.
	
	\begin{lemma}\label{theta implies F}
		Given integers $q,t,k\ge 2$, the graph $\theta_{q,b,t}$ contains an element of $\c{F}_{P_t,k+1}^q$ as a subgraph whenever $2\le b \le \frac{t-3}{k-1}$.
	\end{lemma}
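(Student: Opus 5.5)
The plan is to exhibit a single explicit copy of $(P_t)_R^q$ inside $\theta_{q,b,t}$, with $R$ chosen so that $P_t-R$ has at least $k+1$ components. Recall that $\theta_{q,b,t}=(P_{1+tb})_{R_0}^q$, where $R_0=\{1,1+b,1+2b,\ldots,1+tb\}$. So $\theta_{q,b,t}$ consists of $q$ paths $Q_1,\ldots,Q_q$, each with positions $1,\ldots,1+tb$. These paths share exactly the vertices in the positions of $R_0$ and are otherwise pairwise disjoint.

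\textbf{The window.} In each $Q_p$, take the subpath $W_p$ on the $t$ consecutive positions $b,b+1,\ldots,b+t-1$. This fits because $b+t-1\le 1+tb$ whenever $b\ge 1$. Identify $W_p$ with $P_t$ via position $b+s-1\mapsto s$. Let $R\subseteq V(P_t)$ be the set of $s$ whose corresponding position $b+s-1$ lies in $R_0$.

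\textbf{Verifying the copy.} Since the $Q_p$ agree exactly on $R_0$ and are disjoint elsewhere, the $q$ copies $W_1,\ldots,W_q$ agree exactly on $R$ and are otherwise disjoint. Hence $\bigcup_p W_p$ is a copy of $(P_t)_R^q$. We also need $R\ne V(P_t)$. Position $b$ lies strictly between the shared positions $1$ and $1+b$ because $b\ge 2$, so the first vertex of $P_t$ is not in $R$.

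\textbf{Counting components.} This is the only computational step. The shared positions inside the window are $1+b,1+2b,\ldots$, and in $P_t$ coordinates they are $s=2,\,2+b,\,2+2b,\ldots$. We keep only those with $s\le t-1$, so that the last vertex $t$ of the path is not in $R$. The number $j$ of such $s$ satisfies $j-1=\lfloor (t-3)/b\rfloor$. The hypothesis $b\le (t-3)/(k-1)$ gives $j-1\ge k-1$, so $j\ge k$.

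Consecutive elements of $R$ differ by $b\ge 2$, so each gap between them contains at least one vertex of $P_t-R$. Vertex $1$ lies before the first element of $R$, and vertex $t$ lies after the last one. Therefore $P_t-R$ has exactly $j+1\ge k+1$ components, so $(P_t)_R^q\in\c{F}_{P_t,k+1}^q$, as required.

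The main obstacle is choosing the window offset correctly. Starting the window at position $1$ or $2$ wastes shared vertices and only yields about $(t-1)/b$ components, which does not reach $k+1$ under the stated hypothesis. Aligning the window so that the first shared vertex is its second vertex makes the bound work out exactly.
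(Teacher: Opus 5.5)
Your proof is correct and follows the paper's argument: you use the same window $v_b,\ldots,v_{t+b-1}$ in each of the $q$ paths, take $R$ to be the shared vertices in that window, and count components. One small slip: you cannot ``keep only'' the shared positions with $s\le t-1$, because when $b\mid t-2$ the vertex $s=t$ is shared by all $q$ windows and so is forced into $R$; the count is unaffected, since the gap $\{3+(j-1)b,\ldots,t-1\}$ before it is still nonempty when $b\ge 2$ (the paper sidesteps this by exhibiting the $k+1$ vertices $v_b,v_{2b},\ldots,v_{kb},v_{2+kb}$ in distinct components).
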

	\begin{proof}
		Let $v_i^{(p)}$ denote the $i$th vertex in the $p$th path making up $\theta_{q,b,t}$, and for $i$ of the form $i=1+rb$ we additionally write $v_i$ to denote the vertex which is the $i$th vertex of every path.  For each $1\le p\le q$, let $P^{(p)}$ denote the path on the vertices $v_b^{(p)},v_{b+1}^{(p)},\ldots, v_{t+b-1}^{(p)}$, noting that this path is well defined since each path in $\theta_{q,b,t}$ goes up to index $1+tb\ge t+b-1$.
		
		We claim that the paths $P^{(1)},\ldots,P^{(q)}$ define an element of $\c{F}_{P_t,k+1}^q$.  Indeed, these paths all have $t$ vertices and all agree on the set of vertices $R=\{v_{1+rb}: 1+rb\le t+b-1\}$ and are otherwise pairwise disjoint, so it suffices to show that $P^{(p)}-R$ has at least $k+1$ connected components for each $p$.  And indeed, the vertices $v_b^{(p)},v_{2b}^{(p)},\ldots,v_{kb}^{(p)},v_{2+kb}^{(p)}$ lie in distinct components of $P^{(p)}-R$, where here we implicitly used that $v_{2+kb}^{(p)}\in P^{(p)}$, i.e.\ that $2+kb\le t+b-1$ by hypothesis, as well as that each of these vertices do not lie in $R$ since $b\ge 2$.  This gives the desired result.
	\end{proof}
	
	This in turn yields the following technical result which will be key to our analysis.
	
	\begin{lemma}\label{nice tuples can zigzag}
		Consider integers $q$, $t$, $C'$ and $C''$ with $C''\geq C'\geq 1+q+(t-1)(1+q(\frac{t-3}{k-1}-1))$, and let $G$ be a graph which contains at least one copy of $P_t$ and a $k$-nice $(k-1,1,C',C'',P_t)$-refinement sequence $(\Phi_t,\dots,\Phi_1)$ with $(k,t,\mathcal{T})$-nice tuple $(T_1,T_2,\dots,T_{2k})$. If there exists some $i^*$ with $1<i^*<2k$ such that $|V(T_{i^*})|\leq\frac{t-3}{k-1}-1$, and such that $r_{i^*}+1\in V(T_{{i^*}+1})$ and $\ell_{i^*}-1\in V(T_{{i^*}-1})$, then $G$ contains an element of $\mathcal{F}^q_{P_t,k+1}$ as a subgraph.
	\end{lemma}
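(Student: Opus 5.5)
The plan is to build, inside $G$, a copy of the generalized theta graph $\theta_{q,b,t}$ with $b:=|V(T_{i^*})|+1$, and then apply \Cref{theta implies F}. The hypothesis $|V(T_{i^*})|\le \frac{t-3}{k-1}-1$ is exactly $b\le \frac{t-3}{k-1}$, and $b\ge 2$ holds trivially. Write $T_{i^*}=[\ell,r]$, so that $N(T_{i^*})=\{\ell-1,r+1\}$ since $1<i^*<2k$ and the interval is interior.

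\textbf{Gadget.} By \ref{condition good sequence maps are pairwise disjoint for minimal} and \ref{nice minimal}, for every level $i$ and every $\phi\in\Phi_{i+1}$ there are $C'$ maps in $\Psi(T_{i^*},\phi(N(T_{i^*}));\Phi_1)$ with pairwise disjoint images. Each such map, together with the endpoints $\phi(\ell-1)$ and $\phi(r+1)$, is a path of length $b$ from $\phi(\ell-1)$ to $\phi(r+1)$ in $G$. These $C'$ paths are internally disjoint because $\Phi_1$ is distinguishing. Thus every $\phi$ at any level $\ge 2$ gives a ``$C'$-fold bundle'' of length-$b$ paths between $\phi(\ell-1)$ and $\phi(r+1)$.

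\textbf{Zigzag.} We build vertices $w_0,w_1,\dots,w_t$ of $G$ and maps $\phi_t\in\Phi_t,\phi_{t-1}\in\Phi_{t-1},\dots$, descending one level per step (this is why the refinement sequence has $t$ levels). Start with any $\phi_t\in\Phi_t$, which is nonempty by \ref{condition good sequence Phi_t not too small} since $G$ contains a copy of $P_t$. Set $w_0=\phi_t(\ell-1)$ and $w_1=\phi_t(r+1)$. The steps then alternate:
\begin{itemize}
\item In an odd step we keep the image of $r+1$ and change the image of $\ell-1$, using that $\ell-1\in V(T_{i^*-1})\in\c{T}=\c{T}_{\phi,\Phi_i,C_i}$.
\item In an even step we keep the image of $\ell-1$ and change the image of $r+1$, using $T_{i^*+1}$.
\end{itemize}
Concretely, since $T_{i^*-1}$ is minimal, \ref{condition good sequence maps are pairwise disjoint for minimal} gives $C'$ extensions of $T_{i^*-1}$ with disjoint images, each of which is the restriction of some $\phi'\in\Phi_i$ with $\phi'(N(T_{i^*-1}))=\phi(N(T_{i^*-1}))$. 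We pick one whose image of $\ell-1$ avoids all vertices used so far. The number of used vertices is at most $1+q+(t-1)(1+q(b-2))$, which is where the lower bound on $C'$ comes from. For the new map we then pick $q$ of the $C'$ disjoint $T_{i^*}$-paths from the gadget that avoid all previously used vertices. Consecutive $w_j,w_{j+1}$ are thereby joined by $q$ fresh internally disjoint paths of length $b$. Reading the $w_j$ in order and traversing the paths alternately backwards and forwards gives a copy of $\theta_{q,b,t}$, and \Cref{theta implies F} finishes the proof.

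\textbf{Main obstacle.} The difficulty is ensuring that the new map $\phi'$ really keeps the image of the \emph{other} endpoint fixed. For instance, in an odd step we need $\phi'(r+1)=\phi(r+1)$. This should come from $r+1$ lying in $N(T_{i^*-1})$ or in $R$-type fixed sets. By \ref{nice observation endpoints are x} and \ref{nice observation trees intersection}, $T_{i^*-1}$ ends near $\ell-1$, so $r_{i^*-1}+1\le r+1$, but $r+1$ need not lie in $N(T_{i^*-1})$. To handle this, I would first try to apply the extendability to the subtree $T_{i^*-1}\cup T_{i^*}$, or to $[\ell_{i^*-1},r]$, which lies in $\c{T}$ as a superset of a member. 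Its neighborhood contains $r+1$, so $\phi'(r+1)=\phi(r+1)$ is forced by the definition of $\Psi$ and distinguishing. Minimality of $T_{i^*-1}$ would then be used, via the sunflower argument as in the proof of \Cref{good sequence or few copies}, to make the new $\ell-1$ images distinct. Checking that each new $\phi'$ lies in $\Phi_i$ (so that \ref{condition good sequence T=T} gives the same system $\c{T}$ at the next level) is what forces the one-level descent, and I expect this bookkeeping to be the delicate part.
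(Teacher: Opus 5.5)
Your outer architecture (chain $q$-fold bundles of $T_{i^*}$-paths into a $\theta_{q,b,t}$, descend one level of the refinement sequence per step, then apply \Cref{theta implies F}) is the paper's. However, the step you flag as the main obstacle is a genuine gap, and your fix does not close it.

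First, (S3) only gives restrictions of maps in $\Phi_1$, not $\Phi_i$. So the disjoint copies of $T_{i^*-1}$ do not give a new map at a lower level from which to continue. Moreover, they fix $\phi$ on $N(T_{i^*-1})=\{\ell_{i^*-1}-1,r_{i^*-1}+1\}$, and $r_{i^*-1}+1$ lies inside $T_{i^*}$, so the image of $r+1$ is not controlled.

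Second, your workaround assumes $[\ell_{i^*-1},r]\in\mathcal{T}$ ``as a superset of a member''. But $\mathcal{T}_{\phi,\Phi,C}$ is not closed upward: a larger subtree $T''\supseteq T'$ has a different neighborhood, and maps agreeing with $\phi$ on $N(T')$ need not agree with it on $N(T'')$. Even granting membership, sunflower plus minimality does not make the images of $\ell-1$ vary. A kernel containing $[\ell_{i^*-1},\ell-1]$, with petals that are copies of $T_{i^*}$, leaves no proper subinterval of $T_{i^*-1}$ or $T_{i^*}$ as a component. It is therefore consistent with minimality, and then all these maps send $\ell-1$ to the same vertex. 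Nothing in your argument excludes the image of $r+1$ essentially determining the image of $\ell-1$, in which case the zigzag cannot move.

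The missing idea is (S4), which your plan never uses. Take $i^*$ odd, $j>1$ and $\phi\in\Phi_j$. The paper shows that the vertices $\phi'(\ell-1)$, over $\phi'\in\Phi_{j-1}$ with $\phi'(r+1)=\phi(r+1)$, number at least $t$. Otherwise $\widehat W:=V(\widehat M)\setminus\{\ell_{i^*-1}\}$ would $(\phi,\Phi_{j-1})$-pseudo-pierce $\mathcal{T}$; this set is the $k-1$ edges of $\widehat M$ other than $r_{i^*-2}\ell_{i^*-1}$, plus the vertex $r_{i^*-2}$. Indeed, any member of $\mathcal{T}$ that $\widehat W$ misses is an interval starting at $\ell_{i^*-1}$. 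By minimality it contains $T_{i^*-1}\ni \ell-1$, while $r+1=\ell_{i^*+1}\in\widehat W$, so $\ell-1$ has fewer than $t$ possible images and that member is pseudo-pierced. This contradicts (S4).

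The symmetric bound, moving $r+1$ with $\ell-1$ fixed, uses the edges $(\widehat M\setminus\{r_{i^*}\ell_{i^*+1},r_{i^*-2}\ell_{i^*-1}\})\cup\{(\ell-1)\ell\}$ and the vertex $r_{i^*-2}$. This produces the next map directly in $\Phi_{j-1}$ with the other endpoint fixed. It also gives at least $t$ choices for the moving endpoint, enough to avoid earlier endpoints on that side; distinguishing excludes all other collisions. Only then does (S3), applied at that map, supply $q$ fresh $T_{i^*}$-paths, which is where the lower bound on $C'$ is used.
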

	\begin{proof}
		Set $b:=|V(T_{i^*})|+1$. Our goal will be to find a copy of a generalized theta graph $\theta_{q,b,t}$ in $G$, from which the result will follow from \Cref{theta implies F}.
		
		By symmetry, we may assume that ${i^*}$ is odd.  Let $\widehat{M}=\{\hat{x}_1\hat{x}_2,\dots,\hat{x}_{2k-1}\hat{x}_{2k}\}$ be the matching in $P_t$ associated with $(T_1,T_2,\dots,T_{2k})$ where we assume $\hat{x}_1<\hat{x}_2<\dots<\hat{x}_{2k}$.
		
		Given $j>1$ and a map $\phi\in \Phi_j$, let
		\[
		\mathcal{L}_{\phi,j}:=\{\phi'(\ell_{i^*}-1)\mid \phi'\in \Phi_{j-1},\ \phi'(r_{i^*}+1)=\phi(r_{i^*}+1)\},
		\]
		and
		\[
		\mathcal{R}_{\phi,j}:=\{\phi'(r_{i^*}+1)\mid \phi'\in \Phi_{j-1},\ \phi'(\ell_{i^*}-1)=\phi(\ell_{i^*}-1)\}.
		\]
		\begin{claim}\label{claim XL and XR are big}
			If $j>1$ and $\phi\in \Phi_j$, then $|\mathcal{L}_{\phi,j}|\geq t$ and $|\mathcal{R}_{\phi,j}|\geq t$.
		\end{claim}
		We emphasize that the proof of this claim is the only place in our argument where we rely on the more general notion of pseudo-piercing.
		\begin{proof}
			Assume to the contrary that one of these sets is small.
			
			\textbf{Case 1:} $|\mathcal{L}_{\phi,j}|<t$. In this case we set $\widehat{M}':=\widehat{M}\setminus \{r_{i^*-2}\ell_{i^*-1}\}$, and we aim to show $\widehat{M}'$ along with the vertex $r_{i^*-2}$ is a $(k-1,1,\phi,\Phi_{j-1})$-pseudo-piercing set for $\mathcal{T}$.
			
			Let $\widehat{W}:=V(\widehat{M}')\cup\{r_{i^*-2}\}$. Since $V(\widehat{M})$ pierces $\mathcal{T}$, if any tree $T'$ is not pierced by $\widehat{W}$, then $T'$ must contain $\ell_{i^*-1}$ and no vertices of $\widehat{W}=V(\widehat{M})\setminus \{\ell_{i^*-1}\}$. In particular $T'$ does not contain $r_{i^*-2}=\ell_{i^*-1}-1$ since $i^*$ is odd, so $T'$ must be of the form $[\ell_{i^*-1},r]$ for some $r$. Since $T_{i^*-1}$ is the minimal tree of this form in $\mathcal{T}$ by definition of nice sequences, we must have  $V(T_{i^*-1})\subseteq V(T')$. In particular, the hypothesis of the lemma implies $\ell_{i^*}-1\in V(T_{i^*-1})\sub V(T')$. But then since $r_{i^*}+1\in \widehat{W}$ (since $i^*$ is odd, one of the edges in $\widehat{M}$ is $\{r_{i^*}, r_{i^*}+1\}$), we have that
			\[
			\left\{\phi'(\ell_{i^*}-1)\mid\phi'\in \Phi_{j-1},\ \phi'(\widehat{W})=\phi(\widehat{W})\right\}\subseteq \mathcal{L}_{\phi,j}.
			\]
			Thus since $|\mathcal{L}_{\phi,j}|<t$, $T'$ is $(\phi,\Phi_{j-1})$-pseudo-pierced by $\widehat{W}$, so $\mathcal{T}$ is $(k-1,1,\phi,\Phi_{j-1})$-pseudo-pierced, contradicting \ref{condition good sequence not pseudo-pierced}.
			
			\textbf{Case 2:} $|\mathcal{R}_{\phi,j}|<t$. In this case, we aim to show that the the edges of \[\widehat{M}':=(\widehat{M}\setminus \{r_{i^*}\ell_{i^*+1},r_{i^*-2}\ell_{i^*-1}\})\cup \{(\ell_{i^*}-1)\ell_{i^*}\},\] along with the vertex $r_{i^*-2}$ give a $(k-1,1,\phi,\Phi_{j-1})$-pseudo-piercing set. 
			
			Let $T'\in\mathcal{T}$ be a tree that is not pierced by $\widehat{W}:=V(\widehat{M}')\cup \{r_{i^*-2}\}$. Since $V(\widehat{M})$ pierces $\mathcal{T}$, we must have that $T'$ contains a vertex from $\{r_{i^*},\ell_{i^*+1},\ell_{i^*-1}\}$.
			
			\textbf{Subcase 2.1:} $T'$ contains $r_{i^*}$ but not $\ell_{i^*+1}$. Then by the ordering of vertices in $P_t$, $T'$ must be of the form $[\ell,r_{i^*}]$ for some $\ell\leq r_{i^*}$, but $T_{i^*}$ is the minimal tree in $\mathcal{T}$ of this form, so we must have $V(T_{i^*})\subseteq V(T')$.  In particular, we have that $\ell_{i^*}\in V(T')$, so $\widehat{W}$ does pierce $T'$.
			
			\textbf{Subcase 2.2:} $T$ contains $\ell_{i^*-1}$. Then we may also assume $T'$ does not contain $r_{i^*-2}\in \widehat{W}$, so $T'$ is of the form $[\ell_{i^*-1},r]$ for some $r$, but $T_{i^*-1}$ is the minimal tree of this form so we must have $V(T_{i^*-1})\subseteq V(T')$.  In particular $\ell_{i^*}-1\in V(T')$, so $\widehat{W}$ does pierce $T$.
			
			\textbf{Subcase 2.3:} $T'$ contains $\ell_{i^*+1}$. In this case, we claim that $\widehat{W}$ is a set of vertices which $(\phi,\Phi_{j-1})$-pseudo-pierces $T'$. Indeed, since $\ell_{i^*}-1\in \widehat{W}$,
			\[
			\left\{\phi'(r_{i^*}+1)\mid\phi'\in \Phi_{j-1},\ \phi'(\widehat{W})=\phi(\widehat{W})\right\}\subseteq \mathcal{R}_{\phi,j}.
			\]
			so since $|\mathcal{R}_{\phi,j}|<t$, $T'$ is pseudo-pierced.
			
			Thus $\widehat{W}$ is indeed a $(k-1,1,\phi,\Phi_{j-1})$-pseudo-piercing set for $\mathcal{T}$, which contradicts \ref{condition good sequence not pseudo-pierced}.
		\end{proof}
		
		With this in hand, we can start building the copy of $\theta_{q,b,t}$ in $G$. We will build this recursively. To initalize, let $\phi_t\in \Phi_t$ be any map, noting by \ref{condition good sequence Phi_t not too small} and the fact that $G$ contains a copy of $P_t$ that $\Phi_t\neq \emptyset$.  Set $u_t:=\phi_t(\ell_{i^*}-1)$ and $v_t:=\phi_t(r_{i^*}+1)$. Since $q\leq C'$ and $T_{i^*}$ is minimal in $\mathcal{T}$, by \ref{condition good sequence maps are pairwise disjoint for minimal}, there exist maps $\psi_{t}^{(1)},\psi_{t}^{(2)},\dots,\psi_{t}^{(q)}\in \Psi(T_{i^*},\{u_t,v_t\},\Phi_1)$ (noting that $\{u_t,v_t\}=\phi_t(N(T^*))$) whose images are pairwise disjoint. The graph-images of these paths along with the vertices $u_t$ and $v_t$ give us a copy of the theta graph $\theta_{q,b}$ in $G$ which we will denote by $\theta_t$
		
		Now, for some $j$ with $1\leq j<t$, assume that for all $i$ with $j<i\leq t$ we have already chosen maps $\phi_i\in \Phi_{i}$ and defined $u_i:=\phi_i(\ell_{i^*}-1)$ and $v_i:=\phi_i(r_{i^*}+1)$, and we have defined maps $\psi_i^{(1)},\dots,\psi_i^{(q)}\in \Psi(T_{i^*},\{x_i,y_i\},\Phi_1)$ each of which constitutes a copy $\theta_i$ of $\theta_{q,b}$ in $G$ where for any $i<i'$, 
		\[
		V(\theta_i)\cap V(\theta_{i'})=\begin{cases}
			\emptyset&\text{ if }i'-i\geq 2,\\
			\{u_i\}=\{u_{i'}\}&\text{ if }i'=i+1\text{ and }i\text{ is even,}\\
			\{v_i\}=\{v_{i'}\}&\text{ if }i'=i+1\text{ and }i\text{ is odd.}
		\end{cases}
		\]
		We will now show how to proceed when $j$ is even, the case where $j$ is odd will be symmetric. 
		
		By \Cref{claim XL and XR are big}, we have $|\mathcal{L}_{\phi_{j+1},j+1}|\geq t$ and $|\mathcal{R}_{\phi_{j+1},j+1}|\geq t$. Let $v\in \mathcal{R}_{\phi_{j+1},j+1}\setminus \{v_i\mid j<i\leq t\}$, and let $\phi_j\in \Phi_j$ be a map such that $u_j:=\phi_j(\ell_{i^*-1})=\phi_{j+1}(\ell_{i^*-1})=u_{j+1}$ and such that $v_j:=\phi_j(r_{i^*+1})=v$ (such a map exists by the definition of $\mathcal{R}_{\phi_{j+1},j+1}$), noting that $v_j$ does not intersect $\bigcup_{i=j+1}^tV(\theta_i)$ since the $\Phi_i$'s are distinguishing and because $v=v_j$ was chosen to avoid $v_i$ for $i>j$.
		
		By \ref{condition good sequence maps are pairwise disjoint for minimal} we can choose maps $\psi^{(1)}_j,\dots\psi^{(C')}_{j}\in \Psi(T_{i^*},\{u_j,v_j\},\Phi_1)$ with pairwise-disjoint images. At most $\left|\bigcup_{i=j+1}^tV(\theta_i)\right|=1+(t-j)(1+q(b-1))$ of these maps have images that intersect some $\theta_i$ with $i>j$, and since  $C'\geq q+1+(t-j)(1+q(b-1))$ this means at least $q$ of these maps avoid the $\theta_i$'s, assume without loss of generality that $\psi_j^{(1)},\dots,\psi_j^{(q)}$ are $q$ maps whose images do not intersect $\bigcup_{i=j+1}^tV(\theta_i)$. The graph-images of these maps along with the vertices $u_i$ and $v_i$ constitute a copy of $\theta_{q,b}$ in $G$, call it $\theta_j$, and note that $\theta_j$ indeed intersects $\theta_{j+1}$ only in $u_j=u_{j+1}$, and does not intersect any other $\theta_i$ for $i>j$.
		
		We may continue this process until we have built $\theta_t,\theta_{t-1},\dots,\theta_1$, which intersect in such a way to give a copy of $\theta_{q,b,t}$ as desired.  By hypothesis we have $b=|V(T_{i^*})|+1\le \frac{t-3}{k-1}$ and $b\ge 2$, so by \Cref{theta implies F} this $\theta_{q,b,t}\sub G$ contains an element of $\c{F}_{P_t,k+1}^q$ as a subgraph, proving the result.
	\end{proof}

	The remainder of this subsection is dedicated to showing that a $k$-nice refinement sequence with the properties needed to apply \Cref{nice tuples can zigzag} exists. In particular, we want to find a ``small'' tree $T_{i^*}$ such that $\ell_{i^*}-1\in V(T_{i^*-1})$ and $r_{i^*}+1\in V(T_{i^*+1})$. Before we can find such a tree, we need the following, which shows that $T_1$ and $T_{2k}$ must contain only the vertices $1$ and $t$ respectively.

	\begin{lemma}\label{ell2 equals 2}
		If $G$ contains a $k$-nice $(k-1,1,C',C'',P_t)$-refinement sequence $(\Phi_t,\dots,\Phi_1)$ with $(k,t,\mathcal{T})$-nice tuple $(T_1,T_2,\dots,T_{2k})$ such that $C'\ge 2q+t$  and such that $\ell_2\neq 2$ or $r_{2k-1}\neq t-1$, then $G$ contains a subgraph isomorphic to an element of $\c{F}_{P_t,k+1}^q$.
	\end{lemma}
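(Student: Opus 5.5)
By the symmetry $i\mapsto t+1-i$ of $P_t$, which reverses the nice tuple, it suffices to treat the case $\ell_2\neq 2$. By \ref{nice observation endpoints are x} we have $r_1=\hat{x}_1$ and $\ell_2=\hat{x}_2=\hat{x}_1+1$, so $\ell_2\ne 2$ means $r_1=\hat x_1\ge 2$. Hence vertex $1$ precedes $\hat x_1$.

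The plan is to exhibit many "free" extensions on the left end and combine them with the sunflower structure on the remaining minimal trees to build a copy of $(P_t)_R^q$ in which $P_t-R$ has at least $k+1$ components. This follows the amalgamation argument in the proof of \Cref{GT Pierces by Edges}. The extra component should come from the segment $[1,\hat x_1-1]$ together with the left portion of $T_1$.

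First I would show that $T_1$ can be taken to be $[\ell_1,\hat x_1]$ with $\ell_1\ge 1$, and that the subtree $[1,\hat x_1-1]$ cannot be forced to be pierced cheaply. Concretely, I would argue that if every $T'\in\mathcal{T}$ containing $\hat x_1$ also contained a vertex of $V(\widehat M)\setminus\{\hat x_1\}$ or of $[1,\hat x_1-1]$, then one could replace the edge $\hat x_1\hat x_2$ by a single vertex. This would give a $(k-1,1)$-piercing set (respectively a pseudo-piercing set), contradicting \ref{condition good sequence not pseudo-pierced}.

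The main step is then to fix $\phi\in\Phi_t$ and use \ref{condition good sequence maps are pairwise disjoint for minimal} for each of the $2k$ minimal trees $T_i$. This gives $C'\ge 2q+t$ pairwise disjoint extensions $\psi_i^{(1)},\dots,\psi_i^{(C')}\in\Psi(T_i,\phi(N(T_i));\Phi_1)$. Since the $T_i$ may overlap (only $T_i,T_{i+1}$ with $i$ even, by \ref{nice observation trees intersection}), I would group the trees and select $q$ extensions for each group greedily. Each choice blocks at most $t$ vertices, so the bound $C'\ge 2q+t$ should leave $q$ disjoint choices. This yields $q$ monomorphisms agreeing on $R:=V(P_t)\setminus\bigcup_i V(T_i)$, as in \Cref{claim each map is a monomorphism} and \Cref{claim amalgamation}.

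The components of $P_t-R$ are $T_1$, the merged pairs $T_{2s}\cup T_{2s+1}$ for $1\le s\le k-1$ (or the separate trees when they are disjoint), and $T_{2k}$. This gives only $k+1$ components when all pairs merge. The crucial point is that the extra room at vertex $1$ lets one split $T_1$ from the left part, or guarantees that the neighbourhood vertex $\ell_1-1$ (which exists since $r_1\ge2$, after possibly shrinking) lies in $R$. This is what keeps the component count at least $k+1$ in the worst case.

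I expect the main obstacle to be the overlapping pairs $T_{2s},T_{2s+1}$. Their extensions need not be compatible with each other, because the neighbourhood of one lies inside the other. Resolving this will likely require the pseudo-piercing information across the levels $\Phi_j$, in the spirit of \Cref{claim XL and XR are big}.
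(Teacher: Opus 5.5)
Your plan has a genuine gap. It sits in the component count, not only in the compatibility issue you flag at the end.

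\textbf{The component count is wrong.} By \ref{nice observation endpoints are x} we have $r_{2s-1}+1=\ell_{2s}$ for every $s$, so $T_{2s-1}$ and $T_{2s}$ are always adjacent in $P_t$. With $R=V(P_t)\setminus\bigcup_i V(T_i)$ they therefore lie in the same component of $P_t-R$. Hence $P_t-R$ has at most $k$ components even when every pair $T_{2s},T_{2s+1}$ is separated. When there are no gaps it has only one, the interval $[\ell_1,r_{2k}]$.

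\textbf{The amalgamation is not legitimate.} We have $\ell_{2s}\in N(T_{2s-1})\cap V(T_{2s})$, so independently chosen extensions of $T_{2s-1}$ and $T_{2s}$ need not be joined by an edge. This is exactly why \Cref{GT Pierces by Edges} assumes $(V(T_i')\cup N(T_i'))\cap V(T_j')=\emptyset$ before \Cref{claim each map is a monomorphism} applies.

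\textbf{The hypothesis is never used.} The count you wrote down does not use $\ell_2\neq 2$. Your preliminary step is also moot, since $T_1=[\ell_1,\hat x_1]$ is already part of the nice tuple. An argument producing $k+1$ petals without this hypothesis would, via \Cref{lemma few copies implies k-nice refinement sequence}, prove \Cref{path reduce by 1} for every $t$. That contradicts \Cref{tightness for paths} when $t\equiv 2\pmod{k-1}$.

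\textbf{The missing idea} is to use only one tree from each glued pair. In your orientation, take the $k$ even-indexed trees $T_2,T_4,\dots,T_{2k}$.
\begin{itemize}
\item By \ref{nice observation far apart} they are pairwise separated by vertices lying in none of them, and their neighbourhoods meet none of them.
\item Keep $\phi$ on everything else. Via \ref{condition good sequence maps are pairwise disjoint for minimal}, take $2q$ pairwise disjoint extensions of each tree that avoid $\phi(V(P_t))$; this is where $C'\ge 2q+t$ enters.
\item This gives $k$ petals with no compatibility problem.
\end{itemize}
The hypothesis $\ell_2\ge 3$ is then spent on creating the $(k+1)$-st petal by rerouting the start of the path.
\begin{itemize}
\item If $\ell_2=3$, send vertex $1$ to $\psi_2^{(q+p)}(3)$. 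This is the first vertex of a second, disjoint extension of $T_2$, and it is adjacent to $\phi(2)$ because $2\in N(T_2)$. Then $\{1\}$ is the extra component.
\item If $\ell_2\ge 4$, fold the path as $\dots,\psi_1^{(p)}(r_1),\phi(\ell_2),\phi(\ell_2-1),\psi_2^{(p)}(\ell_2),\dots$. That is, send each $d\le \ell_2-3$ to the image of $d+2$ under $\psi_1^{(p)}$ if $d+2\in V(T_1)$, and under $\phi$ otherwise. Send $\ell_2-2$ to $\phi(\ell_2)$, and everything else as before. The shifted copy of $T_1$ is the extra component, separated from $T_2$ by the two fixed vertices $\phi(\ell_2),\phi(\ell_2-1)$.
\end{itemize}
The paper carries out exactly this construction in the mirror orientation $r_{2k-1}\ne t-1$, using the odd-indexed trees.
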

	
	\begin{proof}
		By the symmetry of the problem it suffices to prove the conclusion under the assumption $r_{2k-1}\ne t-1$. Consider any $\phi\in \Phi_t$.  By \ref{condition good sequence maps are pairwise disjoint for minimal} together with the fact that each $T_i$ is a minimal element of $\c{T}=\c{T}_{\phi,\Phi_{t-1},C_{t-1}}$ and that $C'\ge 2q+t$; for each $i$ there exist maps $\psi_i^{(1)},\ldots,\psi_{i}^{(2q+t)}\in \Psi(T_i,\phi(N(T_i));\Phi_{t-1})$ with disjoint images. At most $t$ of these maps have images which intersect with the image of $\phi$, so we may assume without loss of generality that $\psi_i^{(1)},\ldots,\psi_{i}^{(2q)}\in \Psi(T_i,\phi(N(T_i));\Phi_{t-1})$ have images which are disjoint from the image of $\phi$. Let us define $I:=\bigcup_{s=1}^{k}V(T_{2s-1})$. By \Cref{observation properties of nice tuples}, we have that the sets $V(T_{2s-1})$ and $V(T_{2s'-1})$ are disjoint for each $s\neq s'$, $s,s'\in [k]$, which will be crucial to make sure the maps we introduce below are well-defined. We break our proof into two cases based on the value of $r_{2k-1}$.  
		
		\textbf{Case 1:} $r_{2k-1}=t-2$. For each integer $1\le p\le q$, we define a map $\phi^{(p)}:V(P_t)\to V(G)$ as follows:
		\begin{itemize}
			\item For each integer $1\le d\le t-1$ with $d\notin I$ we let $\phi^{(p)}(d)=\phi(d)$.
			\item For each integer $1\le d\le t-1$ with $d\in I$, say with $d\in V(T_{2s-1})$, we let $\phi^{(p)}(d)=\psi_{2s-1}^{(p)}(d)$.
			\item We let $\phi^{(p)}(t)=\psi_{2k-1}^{(q+p)}(t-2)$.
		\end{itemize}
		
		We emphasize this last step is well-defined since $r_{2k-1}=t-2$ implies $t-2\in [\ell_{2k-1},r_{2k-1}]=V(T_{2k-1})$ and hence lies in the domain of $\psi^{(q+p)}
		_{2k-1}$.
		
		\begin{figure}
			\begin{center}
				\begin{tikzpicture}
					\draw (-4,0)--(9.5,0);
					
					\draw (4.5,0)--(5.5,0.4)--(7,0.4)--(8,0);
					\draw (4.5,0)--(5.5,0.8)--(7,0.8)--(8,0);
					\draw (4.5,0)--(5.5,1.2)--(7,1.2)--(8,0);
					\draw (4.5,0)--(5.5,1.6)--(7,1.6)--(8,0);
					\draw (4.5,0)--(5.5,2)--(7,2)--(8,0);
					\draw (4.5,0)--(5.5,2.4)--(7,2.4)--(8,0);

					\draw (0,0)--(1,0.5)--(2.5,0.5)--(3.5,0);
					\draw (0,0)--(1,1)--(2.5,1)--(3.5,0);
					\draw (0,0)--(1,1.5)--(2.5,1.5)--(3.5,0);
					
					\draw (-4,0.5)--(-2.5,0.5)--(-1.5,0);
					\draw (-4,1)--(-2.5,1)--(-1.5,0);
					\draw (-4,1.5)--(-2.5,1.5)--(-1.5,0);

					\draw[very thick, color=blue] (-4,1)--(-2.5,1)--(-1.5,0)--(0,0)--(1,1)--(2.5,1)--(3.5,0)--(4.5,0)--(5.5,0.8)--(7,0.8)--(8,0)--(7,2);
					
					\draw[fill=black] (9,0) circle (2pt);
					
					\draw[fill=black] (8,0) circle (2pt);
					\draw[fill=black] (7,0) circle (2pt);
					\draw[fill=black] (5.5,0) circle (2pt);
					\draw[fill=black] (4.5,0) circle (2pt);
					\draw[fill=black] (3.5,0) circle (2pt);
					\draw[fill=black] (2.5,0) circle (2pt);
					\draw[fill=black] (1,0) circle (2pt);
					\draw[fill=black] (0,0) circle (2pt);
					\draw[fill=black] (-1.5,0) circle (2pt);
					\draw[fill=black] (-2.5,0) circle (2pt);
					
					\draw[fill=black] (7,1.6) circle (2pt);
					\draw[fill=black] (7,2) circle (2pt);
					\draw[fill=black] (7,2.4) circle (2pt);
					
					\draw[fill=black] (5.5,1.6) circle (2pt);
					\draw[fill=black] (5.5,2) circle (2pt);
					\draw[fill=black] (5.5,2.4) circle (2pt);
					
					\draw[fill=black] (7,0.4) circle (2pt);
					\draw[fill=black] (7,0.8) circle (2pt);
					\draw[fill=black] (7,1.2) circle (2pt);
					
					\draw[fill=black] (5.5,0.4) circle (2pt);
					\draw[fill=black] (5.5,0.8) circle (2pt);
					\draw[fill=black] (5.5,1.2) circle (2pt);
					
					\draw[fill=black] (2.5,0.5) circle (2pt);
					\draw[fill=black] (2.5,1) circle (2pt);
					\draw[fill=black] (2.5,1.5) circle (2pt);
					
					\draw[fill=black] (1,0.5) circle (2pt);
					\draw[fill=black] (1,1) circle (2pt);
					\draw[fill=black] (1,1.5) circle (2pt);
					
					\draw[fill=black] (-2.5,0.5) circle (2pt);
					\draw[fill=black] (-2.5,1) circle (2pt);
					\draw[fill=black] (-2.5,1.5) circle (2pt);
					
					\node at (6.25,2.8) {$T_{2k-1}$};
					\node at (1.75,2) {$T_{2k-3}$};
					\node at (-3.25,2) {$T_{1}$};
					\node at (5.5,-0.3) {$\ell_{2k-1}$};
					\node at (6.8,-0.4) {$r_{2k-1}$};
					\node at (8.4,0.3) {$t-1$};
					\node at (1,-0.3) {$\ell_{2k-3}$};
					\node at (2.5,-0.4) {$r_{2k-3}$};
					\node at (-2.5,-0.4) {$r_{1}$};
					\node at (-0.75,0.5) {\large{\textbf{\dots}}};
				\end{tikzpicture}
			\end{center}
			\caption{Finding a copy of $\mathcal{F}_{P_t,k+1}^q$ in Case 1 of \Cref{ell2 equals 2}. The graph-image of one $\phi^{(p)}$ is shown in blue.}\label{figure case 1 ell2 equals 2}
		\end{figure}
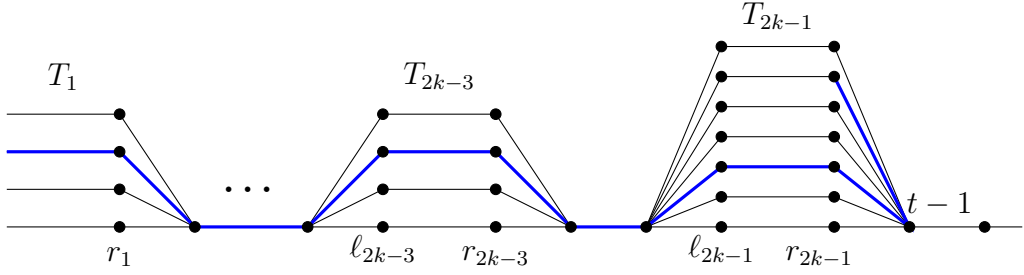
		
		Since $\Phi_{t-1}$ is distinguishing and by our choices of the $\psi_{i}^{(p)}$'s, the collection $\{\phi^{(p)}\mid p\in [q]\}$ consists of monomorphisms of $P_t$ into $G$ such that each monomorphism agrees on the image of the set $R:=V(P_t)\setminus (I\cup \{t\})$, and otherwise the image is pairwise disjoint from any other map in the collection. Furthermore, we note that $P_t-R$ will contain $k+1$ components; $k$ of them being $T_{2s-1}$ for $s\in [k]$, and one corresponding to the isolated vertex $\{t\}$. Thus, the graph-images of the $\phi^{(p)}$'s constitute an element of $\mathcal{F}^q_{P_t,k+1}$; see Figure~\ref{figure case 1 ell2 equals 2} for an illustration of this case.
		
		\textbf{Case 2:} $r_{2k-1}\ne t-2$. This implies $r_{2k-1}\le t-3$ since $r_{2k-1}\ne t-1$ by assumption and $r_{2k-1}<r_{2k}\le t$ by \Cref{observation properties of nice tuples}. Similar to the previous case, for each integer $1\le p\le q$ we define maps $\phi^{(p)}:V(P_t)\to V(G)$ as follows:
		\begin{itemize}
			\item For each integer $1\le d\le r_{2k-1}+1$ with $d\notin I$ we let $\phi^{(p)}(d)=\phi(d)$.
			\item For each integer $1\le d\le r_{2k-1}+1$ with $d\in I$, say with $d\in V(T_{2s-1})$, we let $\phi^{(p)}(d)=\psi^{(p)}_{2s-1}(d)$.
			\item Let $\phi^{(p)}(r_{2k-1}+2)=\phi(r_{2k-1})=\phi(\ell_{2k}-1)$.
			\item For each integer $r_{2k-1}+3\le d\le t$, let $\phi^{(p)}(d)=\psi^{(p)}_{2k}(d-2)$ if $d-2\in V(T_{2k})$ and $\phi^{(p)}(d)=\phi(d-2)$ otherwise. 
		\end{itemize}
		We again emphasize that this map is well-defined: because $r_{2k-3}\le t-3$, we have $r_{2k-3}+2\le t$ and hence this is an element of $V(P_t)$, and similarly the last step defines a map for at least one vertex $d$.
		
		\begin{figure}
			\begin{center}
				\begin{tikzpicture}
					\draw (-4,0)--(9.5,0);
					
					\draw (4.5,0)--(5.5,0.5)--(7,0.5)--(8,0);
					\draw (4.5,0)--(5.5,1)--(7,1)--(8,0);
					\draw (4.5,0)--(5.5,1.5)--(7,1.5)--(8,0);
					
					\draw (7,0)--(8,-0.5)--(9.5,-0.5);
					\draw (7,0)--(8,-1)--(9.5,-1);
					\draw (7,0)--(8,-1.5)--(9.5,-1.5);

					\draw (0,0)--(1,0.5)--(2.5,0.5)--(3.5,0);
					\draw (0,0)--(1,1)--(2.5,1)--(3.5,0);
					\draw (0,0)--(1,1.5)--(2.5,1.5)--(3.5,0);

					\draw (-4,0.5)--(-2.5,0.5)--(-1.5,0);
					\draw (-4,1)--(-2.5,1)--(-1.5,0);
					\draw (-4,1.5)--(-2.5,1.5)--(-1.5,0);

					\draw[very thick, color=blue] (-4,1)--(-2.5,1)--(-1.5,0)--(0,0)--(1,1)--(2.5,1)--(3.5,0)--(4.5,0)--(5.5,1)--(7,1)--(8,0)--(7,0)--(8,-1)--(8.75,-1);
					
					\draw[fill=black] (8.75,-1) circle (2pt);
					
					\draw[fill=black] (8,0) circle (2pt);
					\draw[fill=black] (7,0) circle (2pt);
					\draw[fill=black] (5.5,0) circle (2pt);
					\draw[fill=black] (4.5,0) circle (2pt);
					\draw[fill=black] (3.5,0) circle (2pt);
					\draw[fill=black] (2.5,0) circle (2pt);
					\draw[fill=black] (1,0) circle (2pt);
					\draw[fill=black] (0,0) circle (2pt);
					\draw[fill=black] (-1.5,0) circle (2pt);
					\draw[fill=black] (-2.5,0) circle (2pt);
					
					\draw[fill=black] (8,-0.5) circle (2pt);
					\draw[fill=black] (8,-1) circle (2pt);
					\draw[fill=black] (8,-1.5) circle (2pt);
					
					\draw[fill=black] (7,0.5) circle (2pt);
					\draw[fill=black] (7,1) circle (2pt);
					\draw[fill=black] (7,1.5) circle (2pt);
					
					\draw[fill=black] (5.5,0.5) circle (2pt);
					\draw[fill=black] (5.5,1) circle (2pt);
					\draw[fill=black] (5.5,1.5) circle (2pt);
					
					\draw[fill=black] (2.5,0.5) circle (2pt);
					\draw[fill=black] (2.5,1) circle (2pt);
					\draw[fill=black] (2.5,1.5) circle (2pt);
					
					\draw[fill=black] (1,0.5) circle (2pt);
					\draw[fill=black] (1,1) circle (2pt);
					\draw[fill=black] (1,1.5) circle (2pt);
					
					\draw[fill=black] (-2.5,0.5) circle (2pt);
					\draw[fill=black] (-2.5,1) circle (2pt);
					\draw[fill=black] (-2.5,1.5) circle (2pt);
					
					\node at (6.25,2) {$T_{2k-1}$};
					\node at (1.75,2) {$T_{2k-3}$};
					\node at (-3.25,2) {$T_{1}$};
					\node at (5.5,-0.3) {$\ell_{2k-1}$};
					\node at (6.8,-0.4) {$r_{2k-1}$};
					\node at (8.3,0.3) {$\ell_{2k}$};
					\node at (8.75,-2) {$T_{2k}$};
					\node at (1,-0.3) {$\ell_{2k-3}$};
					\node at (2.5,-0.4) {$r_{2k-3}$};
					\node at (-2.5,-0.4) {$r_{1}$};
					\node at (-0.75,0.5) {\large{\textbf{\dots}}};
					
				\end{tikzpicture}
			\end{center}
			\caption{Finding a copy of $\mathcal{F}_{P_t,k+1}^q$ in Case 2 of \Cref{ell2 equals 2}. The graph-image of one $\phi^{(p)}$ is shown in blue.}\label{figure case 2 ell2 equals 2}
		\end{figure}
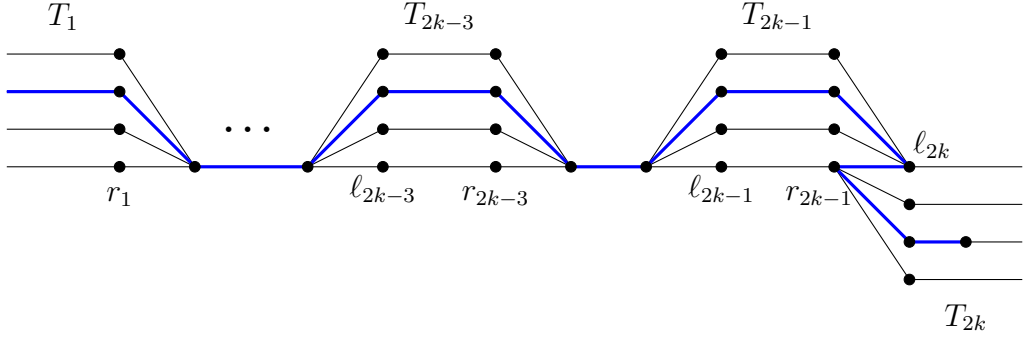
		
		Similar to the previous case, the collection $\{\phi^{(p)}\mid p\in [q]\}$ is a collection of monomorphisms of $P_t$ into $G$ such that each monomorphism agrees on the set \[R:=V(P_t)\setminus (I\cup \{r_{2k-1}+2,r_{2k}+1,r_{2k}+2,\dots,t\}),\] where we note that if $r_{2k}=t$ then $R=V(P_t)\setminus (I\cup \{r_{2k-1}+2\})$.  Then $P_t-R$ has $k+1$ connected components; $k$ of them being $T_{2s-1}$ for each $s\in [k]$ and one corresponding to some non-empty subgraph of $T_{2k}$ which in particular includes $\ell_{2k}=r_{2k-1}+1$. Thus, the union of the graph-images of the $\phi^{(p)}$'s constitutes a copy of $\mathcal{F}^q_{P_t,k+1}$ in $G$. See Figure~\ref{figure case 2 ell2 equals 2} for an illustration of this case.
	\end{proof}
	
	We can use a similar type of argument as in \Cref{ell2 equals 2} to establish the first property we need to apply \Cref{nice tuples can zigzag}.
	
	\begin{lemma}\label{nice tuples dont have gaps}
		If $G$ contains a $k$-nice $(k-1,1,C',C'',P_t)$-refinement sequence $(\Phi_t,\dots,\Phi_1)$ with $(k,t,\mathcal{T})$-nice tuple $(T_1,T_2,\dots,T_{2k})$ such that $C'\ge 2q+t$ and such that either $r_{i^*}+1\notin V(T_{i^*+1})$ for some $1\le i^*<2k$ or $\ell_{i^*}-1\notin V(T_{i^*-1})$ for some $1<i^*\le 2k$, then $G$ contains a subgraph isomorphic to an element of $\c{F}_{P_t,k+1}^q$.
	\end{lemma}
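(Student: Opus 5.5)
By \ref{nice observation endpoints are x}, if $i^*$ is odd then $r_{i^*}+1=\ell_{i^*+1}\in V(T_{i^*+1})$, and if $i^*$ is even then $\ell_{i^*}-1=r_{i^*-1}\in V(T_{i^*-1})$. So the first hypothesis can only hold for even $i^*$, and the second only for odd $i^*$. In the second case, set $i:=i^*-1$, which is even. Since $r_{i}<r_{i+1}$ by \ref{nice observation trees in order}, both cases reduce to the same situation: there is an even $i$ with $r_i+1<\ell_{i+1}$. Put $g:=r_i+1$. By \ref{nice observation order of x and l and r}, $g$ lies in no $T_j$. Moreover $T_i$ is disjoint from $T_{i+1}$, so by \ref{nice observation trees intersection} $T_i$ is disjoint from every other $T_j$.

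I would follow the template of \Cref{ell2 equals 2}. Fix $\phi\in\Phi_t$. By \ref{condition good sequence maps are pairwise disjoint for minimal}, since each $T_j$ is minimal in $\c{T}=\c{T}_{\phi,\Phi_{t-1},C_{t-1}}$ and $C'\ge 2q+t$, there are $2q$ maps $\psi_j^{(1)},\dots,\psi_j^{(2q)}\in\Psi(T_j,\phi(N(T_j));\Phi_{t-1})$ for every $j$. These have pairwise disjoint images that also avoid $\phi(V(P_t))$. As before, let $I:=\bigcup_{s=1}^k V(T_{2s-1})$. These $k$ intervals are pairwise disjoint, and they are pairwise separated by the vertices $\ell_{2s}=r_{2s-1}+1\notin I$. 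So if the copies fix everything outside $I\cup X$ and vary $T_{2s-1}$ through $\psi_{2s-1}^{(p)}$, we already get $k$ components of $P_t-R$. The $(k+1)$-st component must come from the gap.

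My plan for the extra component mimics Case~2 of \Cref{ell2 equals 2}, with a ``detour''. For each $p\in[q]$ I define $\phi^{(p)}$ as follows. It agrees with $\phi$ (odd trees replaced by $\psi^{(p)}_{2s-1}$) on $[1,\ell_i-1]$. It then doubles back through the fixed vertex $\phi(\ell_i-1)=\phi(r_{i-1})$, which is a neighbour of $T_i$. After that it runs a shifted copy $d\mapsto\psi_i^{(q+p)}(d-2)$ of $T_i$, and it continues with $\phi(\cdot-2)$ (again with odd trees replaced) up to $t$. The two vertices consumed by the detour are compensated by the fact that $g$ and one further vertex before $\ell_{i+1}$ need not be covered. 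More precisely, the shift by $2$ must end on $\phi(r_i+1)=\phi(g)$ or later, and $g\notin I$ guarantees that this junction lies in $R$. Alternatively, one can append the varied copy of $T_i$ (or a sub-path of it starting at $\ell_i$) as a pendant piece hanging off a fixed vertex, truncating the far end of the path by the corresponding number of vertices. The choice is whichever keeps every odd $T_{2s-1}$ fully inside the path together with both its fixed neighbours. I then set $R:=V(P_t)\sm(I\cup X)$, where $X$ is the set of positions carrying the varied copy of $T_i$.

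The verification then parallels the proof of \Cref{GT Pierces by Edges}. First, each $\phi^{(p)}$ is a monomorphism. Injectivity comes from distinguishing together with the disjointness of the chosen $\psi$'s from $\phi(V(P_t))$, and edges across the boundary of a varied piece hold because each $\psi$ agrees with $\phi$ on its neighbourhood. Second, two distinct $\phi^{(p)}$ agree exactly on $R$. Third, $P_t-R$ has the $k$ odd trees plus the component $X$, and these are pairwise separated by fixed vertices. The graph-images then form a member of $\c{F}_{P_t,k+1}^q$.

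The main obstacle is the bookkeeping of the shifted detour. I must make sure the rerouted path has exactly $t$ vertices, that every odd tree $T_{2s-1}$ together with its neighbours $\ell_{2s-1}-1,r_{2s-1}+1$ is still placed in an index-consistent way (an odd tree to the right of the detour is shifted, so its $\psi$ needs to be applied as $\psi(d-2)$, exactly as for $T_{2k}$ in Case~2), and that the shift does not cause a varied vertex to coincide with a fixed one. The edge cases $i=2k$ (no right neighbour; this is handled essentially by \Cref{ell2 equals 2}) and $\ell_i-1$ lying at the path's start need separate mention. I would first try the pendant/truncation version, since it avoids shifting the trees to the right of the gap.
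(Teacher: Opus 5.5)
\textbf{There is a genuine gap.} Your reduction to an even $i\in[2,2k-2]$ with $g=r_i+1<\ell_{i+1}$ is correct. The double-back construction does give valid monomorphisms. It does not, however, cover all cases.

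The detour through $\phi(\ell_i),\phi(\ell_i-1)$ costs two extra vertices, and these cannot be recovered inside the gap. A path embedding cannot skip $\phi(g)$ or anything after it, and the gap may be just $\{g\}$. So everything to the right of the detour is shifted by two, and original positions $t-1,t$ are lost. Your own next sentence concedes this, contradicting the ``compensation'' claim.

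When $T_{2k-1}=\{t-1\}$ (hence $T_{2k}=\{t\}$), this shift deletes the varied copy of $T_{2k-1}$ entirely, leaving only $k$ components. The mirror-image detour fails the same way when $T_1=\{1\}$ and $T_2=\{2\}$. This doubly bad case is not excluded by \Cref{ell2 equals 2}: it is exactly what survives the reduction $\ell_2=2$, $r_{2k-1}=t-1$. For example, take $k=3$, $t=9$, with $T_1,\dots,T_6$ the singletons $\{1\},\{2\},\{4\},\{5\},\{8\},\{9\}$. Every choice of gap and orientation then yields only three components.

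The edge cases you flag are not the real ones ($i=2k$ cannot occur). Your pendant alternative cannot fix this case while all $k$ odd trees stay varied in place. A pendant must hang off a fixed endpoint of the new path. With $T_1=\{1\}$ and $T_{2k-1}=\{t-1\}$ varied, position $1$ is a varied vertex, and the only fixed endpoint is $\phi(t)$ at position $t$. A pendant there would need a $(t+1)$-st vertex.

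\textbf{The missing idea is to use the gap to switch parity.} After reducing to $\ell_2=2$ and $r_{2k-1}=t-1$ via \Cref{ell2 equals 2}, the paper varies the \emph{even} trees $T_2,T_4,\dots,T_i$ to the left of the gap and the odd trees $T_{i+1},\dots,T_{2k-1}$ to the right. The fixed vertex $\phi(g)$ is exactly what separates $T_i$ from $T_{i+1}$. On the left, the fixed vertices $\hat x_{2s+1}$ separate consecutive varied trees; on the right, the fixed vertices $\hat x_{2s+2}$ do.

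Because $T_1=\{1\}$ is now fixed, $\phi(1)$ is available as an attachment point. The new path is $\psi_2^{(q+p)}(2),\phi(1)$ followed by the old path shifted by one. This costs only one vertex, paid for by dropping $t\in T_{2k}$, which is not varied. The result has $1+i/2+(k-i/2)=k+1$ components.
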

	\begin{proof}
		By the symmetry of the problem it suffices to prove the conclusion under the assumption that $r_{i^*}+1\notin [\ell_{i^*+1},r_{i^*+1}]$ for some $1\le i^*<2k$.  By \Cref{observation properties of nice tuples}, we know that $i^*$ can not be odd, so we can assume $i^*=2m$ for some integer $1\le m\le k-1$ since $i^*<2k$.  Let $\phi\in \Phi_{t}$. By the same argument as in the proof of \Cref{ell2 equals 2}, for each $T_j$ there exist maps $\psi_j^{(1)},\ldots,\psi_{j}^{(2q)}\in \Psi(T_j,\phi(N(T_j));\Phi_{t-1})$ with pairwise-disjoint images and with images disjoint from the image of $\phi$. We may further assume throughout that $\ell_2=2$ and $r_{2k-3}=t-1$, as otherwise the result holds by \Cref{ell2 equals 2}.
		
		Define $I_0:=\bigcup_{s=1}^m V(T_{2s})$ as well as the sets $I_1:=\bigcup_{s=m}^{k-1} V(T_{2s+1})$ and $I:=I_0\cup I_1$.   For each integer $1\le p\le q$, we define a map $\phi^{(p)}:V(P_t)\to V(G)$ as follows:
		\begin{itemize}
			\item We let $\phi^{(p)}(1)=\psi^{(q+p)}_{2}(2)$.
			\item For $2\le d\le t$ such that $d-1\notin I$ we let $\phi^{(p)}(d)=\phi(d-1)$.
			\item For $2\le d\le t$ such that $d-1\in I$, say with $d-1\in [\ell_j,r_j]$, we let $\phi^{(p)}(d)=\psi_j^{(p)}(d-1)$.
		\end{itemize}
		
		\begin{figure}
			\begin{center}
				\begin{tikzpicture}[scale=0.8]
					\draw (-6,0)--(12,0);
					
					\draw (-6,0)--(-5,-0.5)--(-3.5,-0.5)--(-2.5,0);
					\draw (-6,0)--(-5,-1)--(-3.5,-1)--(-2.5,0);
					\draw (-6,0)--(-5,-1.5)--(-3.5,-1.5)--(-2.5,0);
					\draw (-6,0)--(-5,-2)--(-3.5,-2)--(-2.5,0);
					\draw (-6,0)--(-5,-2.5)--(-3.5,-2.5)--(-2.5,0);
					\draw (-6,0)--(-5,-3)--(-3.5,-3)--(-2.5,0);
					
					\draw (-1,0)--(0,-0.5)--(1.5,-0.5)--(2.5,0);
					\draw (-1,0)--(0,-1)--(1.5,-1)--(2.5,0);
					\draw (-1,0)--(0,-1.5)--(1.5,-1.5)--(2.5,0);
					
					\draw (3.5,0)--(4.5,0.5)--(6,0.5)--(7,0);
					\draw (3.5,0)--(4.5,1)--(6,1)--(7,0);
					\draw (3.5,0)--(4.5,1.5)--(6,1.5)--(7,0);
					
					\draw (8.5,0)--(9.5,0.5)--(11,0.5)--(12,0);
					\draw (8.5,0)--(9.5,1)--(11,1)--(12,0);
					\draw (8.5,0)--(9.5,1.5)--(11,1.5)--(12,0);
					
					\draw[very thick, color=blue] (-5,-2.5)--(-6,0)--(-5,-1)--(-3.5,-1)--(-2.5,0)--(-1,0)--(0,-1)--(1.5,-1)--(2.5,0)--(3.5,0)--(4.5,1)--(6,1)--(7,0)--(8.5,0)--(9.5,1)--(11,1);

					\draw[fill=black] (-6,0) circle (2pt);
					\draw[fill=black] (-5,0) circle (2pt);
					\draw[fill=black] (-3.5,0) circle (2pt);
					\draw[fill=black] (-1,0) circle (2pt);
					\draw[fill=black] (-2.5,0) circle (2pt);
					\draw[fill=black] (0,0) circle (2pt);
					\draw[fill=black] (1.5,0) circle (2pt);
					\draw[fill=black] (2.5,0) circle (2pt);
					\draw[fill=black] (3.5,0) circle (2pt);
					\draw[fill=black] (4.5,0) circle (2pt);
					\draw[fill=black] (6,0) circle (2pt);
					\draw[fill=black] (7,0) circle (2pt);
					\draw[fill=black] (8.5,0) circle (2pt);
					\draw[fill=black] (9.5,0) circle (2pt);
					\draw[fill=black] (11,0) circle (2pt);
					\draw[fill=black] (12,0) circle (2pt);

					\draw[fill=black] (-5,-0.5) circle (2pt);
					\draw[fill=black] (-5,-1) circle (2pt);
					\draw[fill=black] (-5,-1.5) circle (2pt);
					\draw[fill=black] (-5,-2) circle (2pt);
					\draw[fill=black] (-5,-2.5) circle (2pt);
					\draw[fill=black] (-5,-3) circle (2pt);
					
					\draw[fill=black] (-3.5,-0.5) circle (2pt);
					\draw[fill=black] (-3.5,-1) circle (2pt);
					\draw[fill=black] (-3.5,-1.5) circle (2pt);
					\draw[fill=black] (-3.5,-2) circle (2pt);
					\draw[fill=black] (-3.5,-2.5) circle (2pt);
					\draw[fill=black] (-3.5,-3) circle (2pt);

					\draw[fill=black] (0,-0.5) circle (2pt);
					\draw[fill=black] (0,-1) circle (2pt);
					\draw[fill=black] (0,-1.5) circle (2pt);
					
					\draw[fill=black] (1.5,-0.5) circle (2pt);
					\draw[fill=black] (1.5,-1) circle (2pt);
					\draw[fill=black] (1.5,-1.5) circle (2pt);
					
					\draw[fill=black] (4.5,0.5) circle (2pt);
					\draw[fill=black] (4.5,1.5) circle (2pt);
					\draw[fill=black] (4.5,1) circle (2pt);
					
					\draw[fill=black] (6,0.5) circle (2pt);
					\draw[fill=black] (6,1) circle (2pt);
					\draw[fill=black] (6,1.5) circle (2pt);
					
					\draw[fill=black] (9.5,0.5) circle (2pt);
					\draw[fill=black] (9.5,1) circle (2pt);
					\draw[fill=black] (9.5,1.5) circle (2pt);
					
					\draw[fill=black] (11,0.5) circle (2pt);
					\draw[fill=black] (11,1) circle (2pt);
					\draw[fill=black] (11,1.5) circle (2pt);

					\node at (0.75,1) {$T_{2s}$};
					\node at (5.25,-1) {$T_{2s+1}$};
					\node at (10.25,-1) {$T_{2k-1}$};
					\node at (-4.25,1) {$T_{2}$};
					\node at (-1.75,-0.5) {\large{\textbf{\dots}}};
					\node at (7.75,0.5) {\large{\textbf{\dots}}};
					
					\node at (-5,0.3) {{\footnotesize$\phi(\ell_{2})$}};
					\node at (-3.5,0.3) {{\footnotesize$\phi(r_{2})$}};
					\node at (0,0.3) {{\footnotesize$\phi(\ell_{2s})$}};
					\node at (1.5,0.3) {{\footnotesize$\phi(r_{2s})$}};
					\node at (4.4,-0.3) {{\footnotesize$\phi(\ell_{2s+1})$}};
					\node at (6.1,-0.3) {{\footnotesize$\phi(r_{2s+1})$}};
					\node at (9.4,-0.3) {{\footnotesize$\phi(\ell_{2k-1})$}};
					\node at (11.1,-0.3) {{\footnotesize$\phi(r_{2k-1})$}};
				\end{tikzpicture}
			\end{center}
			\caption{Finding a copy of $\mathcal{F}_{P_t,k+1}^q$ in \Cref{nice tuples dont have gaps}. The graph-image of one $\phi^{(p)}$ is shown in blue.}\label{figure if there is a gap we win}
		\end{figure}
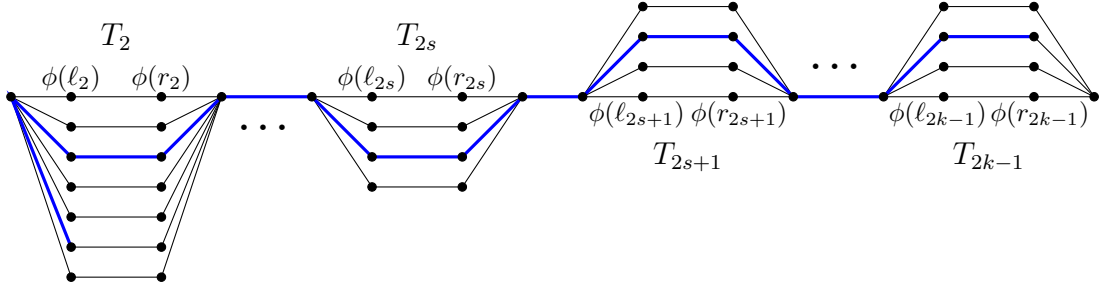
		
		Since $\Phi_{t-1}$ is distinguishing and by our choices of the $\psi^{(p)}_j$'s, the collection $\{\phi^{(p)}\mid p\in [q]\}$ is a collection of monomorphisms of $P_t$ into $G$ such that each monomorphism agrees on the image of $R:=V(T)\setminus (I'\cup \{1\})$ where $I':=\{d\mid d-1\in I\}$, and otherwise the images are pairwise disjoint. With this, $P_t-R$ has $k+1$ components; one given by the singleton $\{2\}$, $m$ given by $T_{2s}$ for $s\in [m]$, and $k-m$ given by $T_{2s+1})$ for $s\in [k-1]\setminus [m-1]$. Taken together the graph-images of the $\phi^{(p)}$'s yield a copy of $\mathcal{F}^q_{P_t,k+1}$ in $G$. See Figure~\ref{figure if there is a gap we win} for an illustration.
	\end{proof}
	
	Up to this point all of our results hold for arbitrary $k,t$, but as mentioned around the statement of \Cref{k-2 edges for paths}, we know that our conclusion can only hold if $t\not\equiv 2 \mod k-1$. The following observation, which involves finding a ``small'' tree $T_{i^*}$ to use in \Cref{nice tuples can zigzag}, is the only place where we use this condition.
	
	\begin{lemma}\label{nice tuples have a small tree}
		If $(T_1,T_2,\dots,T_{2k})$ is a $(k,t,\mathcal{T})$-nice tuple for some collection $\mathcal{T}$ of subtrees of $P_t$ and if $t\not \equiv 2\mod k-1$,  then there exists some $1<i^*<2k$ such that 
		\[
		|V(T_{i^*})|\leq  \frac{t-3}{k-1}-1.
		\]
	\end{lemma}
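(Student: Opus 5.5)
The plan is a pure counting (pigeonhole) argument on the odd-indexed trees $T_3,T_5,\dots,T_{2k-1}$, all of which have index strictly between $1$ and $2k$. The divisibility hypothesis enters only at the very end, through an integrality step.

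First I will collect a large set of pairwise disjoint vertex sets inside $V(P_t)=[t]$. By \ref{nice observation trees intersection} of \Cref{observation properties of nice tuples}, two trees $T_i,T_j$ with $i<j$ can intersect only when $j=i+1$ and $i$ is even. In particular, the odd-indexed trees $T_1,T_3,\dots,T_{2k-1}$ are pairwise vertex-disjoint. Next I will use the matching vertices $\hat{x}_1,\dots,\hat{x}_{2k}$. By \ref{nice SDR}, each $\hat{x}_i$ lies in $T_i$ and in no other $T_j$. Hence none of the $k+1$ vertices $\hat{x}_1,\hat{x}_2,\hat{x}_4,\dots,\hat{x}_{2k}$ lies in any of $T_3,T_5,\dots,T_{2k-1}$. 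Here $\hat{x}_1$ avoids these trees because it lies only in $T_1$, and each $\hat{x}_{2s}$ avoids them because it lies only in the even tree $T_{2s}$. These $k+1$ vertices are distinct, since the matching has $\hat{x}_1<\dots<\hat{x}_{2k}$.

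It follows that the $k-1$ sets $V(T_3),V(T_5),\dots,V(T_{2k-1})$ are pairwise disjoint subsets of $[t]\setminus\{\hat{x}_1,\hat{x}_2,\hat{x}_4,\dots,\hat{x}_{2k}\}$, a set of size $t-k-1$. Therefore
\[
\sum_{s=2}^{k}|V(T_{2s-1})|\le t-k-1,
\]
so some $i^*=2s-1$ with $3\le i^*\le 2k-1<2k$ satisfies
\[
|V(T_{i^*})|\le \frac{t-k-1}{k-1}=\frac{t-2}{k-1}-1 .
\]
Here $k\ge 3$ guarantees $k-1\ge 2$, so there is at least one such odd index and the division makes sense.

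It remains to improve $t-2$ to $t-3$, which is the only place the hypothesis $t\not\equiv 2\mod k-1$ is used. Since $|V(T_{i^*})|$ is an integer, the bound gives $|V(T_{i^*})|\le \lfloor \frac{t-2}{k-1}\rfloor -1$. Because $k-1\nmid t-2$, we have $\lfloor \frac{t-2}{k-1}\rfloor\le \frac{t-3}{k-1}$, and the claimed bound follows.

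I do not expect a serious obstacle. The only points needing care are checking the disjointness claims via \ref{nice SDR} and \ref{nice observation trees intersection}, and noting that the chosen index is genuinely interior ($1<i^*<2k$), which holds because we only used $T_3,\dots,T_{2k-1}$.
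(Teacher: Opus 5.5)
Your proof is correct. It uses a slightly different and somewhat leaner pigeonhole than the paper's.

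\textbf{The paper's route.} The paper works with the even-indexed trees $T_2,T_4,\dots,T_{2k-2}$. It pads each $T_i$ to the interval $I_i=[\ell_i+1,r_{i+1}-1]$. A small case analysis then shows these $k-1$ intervals are pairwise disjoint and avoid all $2k$ matching vertices. Pigeonhole gives $|I_{i^*}|\le\lfloor\frac{t-2k}{k-1}\rfloor$, and the paper concludes using $V(T_{i^*})\subseteq\{\ell_{i^*}\}\cup I_{i^*}$.

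\textbf{Your route.} You use the odd-indexed trees $T_3,\dots,T_{2k-1}$ instead. Their pairwise disjointness is immediate from (O5), and by (N1) they avoid the $k+1$ vertices $\hat{x}_1,\hat{x}_2,\hat{x}_4,\dots,\hat{x}_{2k}$. No padding is needed. Counting $t-(k+1)$ vertices over $k-1$ sets gives $\frac{t-k-1}{k-1}=1+\frac{t-2k}{k-1}$, which after rounding is exactly the paper's bound. The congruence hypothesis enters the same way in both proofs, through $\lfloor\frac{t-2}{k-1}\rfloor=\lfloor\frac{t-3}{k-1}\rfloor$.

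\textbf{What each buys.} The paper's version bounds the larger set $\{\ell_{i^*}\}\cup I_{i^*}$, so it also controls the gap before $T_{i^*+1}$. That extra information is never used later, because gaps are excluded separately by the ``no gaps'' lemma. Your version needs a shorter verification. It also produces an odd index $i^*\ge 3$, which is exactly the case the zigzag lemma handles directly; the paper's even index relies on the reflection symmetry invoked there. Either parity suffices for the lemma as stated.
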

	
	\begin{proof}
		For each even integer $2\le i\le 2k-2$ let $I_i:=[\ell_i+1,r_{i+1}-1]$, where here we take $I_i:=\emptyset$ whenever $r_{i+1}-1<\ell_i+1$.  
		
		We claim that each $I_i$ set is disjoint from $S:=\{r_1,\ell_2,r_3,\ell_4,\ldots,r_{2k-1},\ell_{2k}\}$. Indeed, fix some $j\in [2k]$ and some even $i\in [2k]$. Using \Cref{observation properties of nice tuples}, if $j$ is odd and $i<j$, then $r_j\not\in I_i$ since $r_j\geq r_{i+1}>r_{i+1}-1$. If $j$ is odd and $i>j$, then  $r_j\not\in I_i$ since $r_j<\ell_{j+1}\leq \ell_i<\ell_i+1$. Similarly, if $j$ is even and $i<j$, then $\ell_j\not\in I_i$ since $\ell_j>r_{j-1}\geq r_{i+1}>r_{i+1}-1$. Finally if $j$ is even and $i\geq j$, then $\ell_j\not\in I_i$ since $\ell_j\leq \ell_i<\ell_i+1$. Thus, $I_i$ is indeed disjoint from $S$.
		
		We also claim that $I_i\cap I_j=\emptyset$ for all $i\ne j$. Indeed if $i,j$ are even integers with $i<j$, then this follows from the fact that $r_{i+1}-1\leq r_{j-1}-1< \ell_j-1<\ell_j+1$.
		
		With these two claims in mind, the pigeonhole principle implies that there exists some even $i^*$ with $2\le i^*\le 2k-2$ such that
		\[
		|I_{i^*}|\leq \left\lfloor \frac{t-2k}{k-1} \right\rfloor=\left\lfloor \frac{t-2}{k-1}\right\rfloor-2=\left\lfloor \frac{t-3}{k-1}\right\rfloor-2,
		\]
		where this last step used $t\not \equiv 2\mod k-1$.  
		
		Now, $V(T_{i^*})=[\ell_{i^*},r_{i^*}]\sub \{\ell_{i^*}\}\cup I_{i^*}$ since $r_{i^*}\leq r_{i^*+1}-1$ by \Cref{observation properties of nice tuples}. It follows then that 
		\[
		|V(T_{i^*})|\leq 1+|I_{i^*}|\leq \left\lfloor \frac{t-3}{k-2}\right\rfloor-1\leq \frac{t-3}{k-2}-1
		\]
		as desired.
	\end{proof}
	
	\subsection{Proof of \texorpdfstring{\Cref{path reduce by 1}}{Theorem 5.1}}
	We now put all of these pieces together to prove the result.
	
	\begin{proof}[Proof of \Cref{path reduce by 1}]
		Fix $q\in\mathbb{N}$ and assume $G$ is $\mathcal{F}^q_{P_t,k+1}$-free and that it is not the case that the number of copies of $P_t$ in $G$ is $O(v(G)^{k-1}e(G))$. Let $C_0$ denote the constant guaranteed by \Cref{lemma few copies implies k-nice refinement sequence}, and let $C':=\max\left\{C_0,1+q+(t-1)(1+q(\frac{t-3}{k-1}-1)),2q+t\right\}$. By \Cref{lemma few copies implies k-nice refinement sequence}, $G$ contains a $k$-nice $(k-1,1,C',C'',P_t)$-refinement sequence (where $C''$ is the constant guaranteed by \Cref{good sequence or few copies}), say with associated subtree system $\mathcal{T}$ and associated $(k,t,\mathcal{T})$-nice tuple $(T_1,T_2,\dots,T_{2k})$.
		
		By \Cref{nice tuples have a small tree}, there exists some $i^*$ with $1<i^*<2k$ such that $|V(T_{i^*})|\leq  \frac{t-3}{k-1}-1$. By \Cref{nice tuples dont have gaps}, we may assume that $r_{i^*}+1\in V(T_{i^*+1})$ and $\ell_{i^*}-1\in V(T_{i^*-1})$. But then \Cref{nice tuples can zigzag} gives us that $G$ is not $\mathcal{F}^q_{P_t,k+1}$-free, completing the proof.
	\end{proof}

	\section{Concluding Remarks}\label{section concluding remarks}
	In this paper, we proved that for any tree $T$, integer $k$, and family of graphs $\c{F}$ that either $\ex(n,T,\c{F})=\Omega(n^{k+1})$ or $\ex(n,T,\c{F})=O(\ex(n,\c{F})^k)$, with this upper bound being tight for $T=P_t$ with $t\equiv 2\mod k-1$.  Moreover, when $T=P_t$ with $t\not \equiv 2\mod k-1$ we refined this upper bound to $\ex(n,P_t,\c{F})=O(\ex(n,\c{F})^{k-1}n)$ which is tight whenever $t\equiv 3\mod k-1$.  In general we believe the following refinement for paths should be true.
	
	\begin{conjecture}\label{optimal path conjecture}
		Let $k,t$ be integers with $k\ge 2$ and $t\ge 2k+1$.  If $0\le r\le k-2$ is the unique integer such that $t-3\equiv r \mod k-1$, then every family of graphs $\c{F}$ either satisfies
		\[\ex(n,P_t,\c{F})=O(\ex(n,\c{F})^{r+2}n^{k-2-r}),\]
		or $\ex(n,P_t,\c{F})=\Omega(n^{k+1})$.
	\end{conjecture}
	We note that these bounds would be best possible if true (see Appendix~\ref{section appendix} for more), and that our main results imply this conjecture in the cases $r=k-2,k-3$.
	
	There are a number of obstacles towards extending our ideas to prove \Cref{optimal path conjecture}.  For example, we lack an effective Helly theorem for piercing subtrees with a given number of vertices and edges.
	
	\begin{problem}\label{mixed Helly problem}
		For all integers $a,b\ge 0$, determine the smallest number $h=h(a,b)$ such that the following holds: if $P_t$ is a tree and $\c{T}$ is a subtree system such that for every $\c{T}'\sub \c{T}$ of size at most $h$ there exists a set of at most $a$ edges and a set of at most $b$ vertices which pierces $\c{T}'$, then there exists a set of at most $a$ edges and at most $b$ vertices which pierces $\c{T}$.
	\end{problem}
	
	We emphasize that we did not directly use a result of this form for proving our path refinement \Cref{k-2 edges for paths}, but such a result with $a=k-1$ and $b=1$ served as our initial motivation for how to go about the proof, and it seems such a result in general may be useful for finding a proof for \Cref{optimal path conjecture}.  
	
	\bibliographystyle{amsplain}
	\bibliography{bib}{}
	
	\appendix
	
	\section{Tightness for Paths}\label{section appendix}
	In this appendix we verify that Theorems~\ref{k-1 edges for trees} and \ref{k-2 edges for paths} are tight for paths of certain lengths.  More generally, we show the following in the spirit of \Cref{optimal path conjecture}.
	\begin{proposition}\label{tightness for paths}
		Let $k,t$ be integers with $k\ge 2$ and $t\ge 2k+1$.  If $0\le r\le k-2$ is the unique integer such that $t-3\equiv r \mod k-1$, then there exists $q$ sufficiently large so that
		\[\ex(n,P_t,\c{F}_{P_t,k+1}^q)=\Omega(\ex(n,\c{F}_{P_t,k+1}^q)^{r+2}n^{k-2-r}).\]
	\end{proposition}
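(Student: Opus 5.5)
The plan is to give a lower-bound construction in two stages. First, pick a dense host graph that is $\c{F}_{P_t,k+1}^q$-free. Second, blow it up in a structured way so that copies of $P_t$ come from $r+2$ independent choices of edges and $k-2-r$ independent choices of free vertices.

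\textbf{Parameters and host graph.} Write $t-3=(k-1)b+r$ with $0\le r\le k-2$, so $b=\lfloor (t-3)/(k-1)\rfloor\ge 2$ because $t\ge 2k+1$. By \Cref{family bound given diameter} and \Cref{lemma 2core}, $\ex(n,\c{F}_{P_t,k+1}^q)$ is governed by theta-type graphs of path length roughly $b$ or $b+1$. So the natural host is a graph $H$ with $\Theta(\ex(n,\c{F}_{P_t,k+1}^q))$ edges whose structure we control. Concretely, I would take an extremal $\c{F}_{P_t,k+1}^q$-free graph $H$ on $n$ vertices. After passing to a bipartite subgraph and using \Cref{distinguishing large set of maps}-style random partitioning, I would assume $H$ is $(k+1)$-partite-like with parts $A_1,\dots$ arranged so edges only go between consecutive parts.

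\textbf{Layered blow-up.} Build $G$ as a ``layered'' graph along the path $P_t=1,2,\dots,t$ as follows.
\begin{itemize}
\item Choose $r+2$ edge positions $\hat{e}_1,\dots,\hat{e}_{r+2}$ of $P_t$. In $G$, each of these positions is realized by a copy of $H$ placed between two layers.
\item Choose $k-2-r$ further vertex positions. Each of these is realized by an independent set of $n$ vertices.
\item Realize every remaining vertex of $P_t$ by a single vertex, or by a structure joined completely to its neighbors.
\end{itemize}
The gaps between consecutive ``free'' positions should have length about $b$, and exactly $r+2$ gaps should have length $b+1$; this matches $t-3=(k-1)b+r$. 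Each choice of one edge from each of the $r+2$ copies of $H$ and one vertex from each of the $k-2-r$ free layers then yields a distinct $P_t$. This gives $\Omega(\ex^{r+2}n^{k-2-r})$ copies, provided the copies of $H$ are chosen with $\Theta(n)$ vertices and $\ex$ is superlinear. The degenerate linear case is handled by the $\Omega(n^{k})$ bound, using the leaf-duplication construction from \Cref{leaf stability corollary}.

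\textbf{Main obstacle: showing $G$ is $\c{F}_{P_t,k+1}^q$-free.} Any member $(P_t)_R^q$ has $P_t-R$ with at least $k+1$ components. Embedding $q$ copies of $P_t$ that agree on $R$ forces $R$ to sit on the ``single-vertex'' layers, since those layers are the only places where the copies can be pinned. The components of $P_t-R$ then have to stretch through the free layers and the $H$-layers.

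The argument I would attempt is a counting-of-gaps one. Between consecutive pinned vertices, a component passing through a copy of $H$ together with its pinned endpoints would create a theta-like structure $(P_s)^q$ inside $H$. If that component is short enough, it would contradict $H$ being $\c{F}_{P_t,k+1}^q$-free, after appropriately extending the theta to a member of the family, as in \Cref{theta implies F}. The pigeonhole count of available gaps against the needed $k+1$ components is exactly where $t-3\equiv r$ enters, and I expect this case analysis to be the delicate part.

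To reduce the risk, I would first try to replace ``$H$ is $\c{F}_{P_t,k+1}^q$-free'' with the stronger, cleaner hypothesis that $H$ has large girth (or is $\theta_{q,b'}$-free for the relevant $b'$). I would then check separately that such $H$ still has $\Theta(\ex(n,\c{F}_{P_t,k+1}^q))$ edges, using the upper bound from \Cref{family bound given diameter} together with matching theta/girth constructions.
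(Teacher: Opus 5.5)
\textbf{Gap: the freeness claim is false for your construction.} The step you flag as the main obstacle is not just delicate: for the layered graph you describe, $G$ is \emph{not} $\c{F}^q_{P_t,k+1}$-free, so the approach fails. For every edge of a copy of $H$ at positions $\{a,a+1\}$ to extend to a $P_t$, the single vertex $s$ at the neighbouring position must be adjacent to every non-isolated vertex on the side $B$ of $H$ facing it. Then $s$ and any vertex $x$ on the other side of $H$ have $\deg_H(x)$ common neighbours. So every vertex of degree at least $q$ forms a $K_{2,q}=\theta_{q,2}$ with $s$, and these combine into members of the family.

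Concretely, take $k=2$, $t=5$ (so $b=2$, $r=0$). Your layout is then forced: a copy $H_1$ on positions $1,2$, a single vertex $s$ at position $3$, and a copy $H_2$ on positions $4,5$. Pick $x$ on the far side of $H_1$ with $\deg_{H_1}(x)\ge 2q$, distinct $w_1,\dots,w_q,y_1,\dots,y_q\in N_{H_1}(x)$, and distinct $v_1,\dots,v_q$ on the near side of $H_2$. The $q$ paths $w_i\,x\,y_i\,s\,v_i$ share exactly the images $x,s$ of positions $2,4$ and are otherwise disjoint. Since $P_5-\{2,4\}$ has three components, $G\supseteq (P_5)^q_{\{2,4\}}\in\c{F}^q_{P_5,3}$. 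Replacing the end vertices by $q$ disjoint paths hanging inside $H$ gives the same for every $t$ when $k=2$. Taking $R=\{x,s,x'\}$ with $x,x'$ both on the far side does it for every $t$ when $k=3$.

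In particular, your assertion that $R$ must sit on the single-vertex layers is false. Your fallback of taking $H$ of large girth or $\theta$-free does not help either: the offending thetas pass through the join vertex $s$ and use nothing about $H$ beyond a few vertices of large degree.

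\textbf{What is missing, and how the paper proceeds.} The short thetas must be absent from the whole host graph, not just from $H$. The paper first shows that every $(P_t)^q_R\in\c{F}^q_{P_t,k+1}$ contains some $\theta_{q,b'}$ with $2\le b'\le b$. Indeed, at least $k-1$ components of $P_t-R$ avoid $1$ and $t$, and together they have at most $t-k-2$ vertices. By pigeonhole one of them has at most $b-1$ vertices, and its $q$ copies between its two neighbours in $R$ form the theta.

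It then takes a single Bukh--Conlon random polynomial graph (Theorem~\ref{random polynomials general}, via Corollary~\ref{random polynomials theta}) that is $\{\theta_{q,2},\dots,\theta_{q,b}\}$-free and contains $\Omega(n^{t-(1-1/b)(t-1)})$ copies of $P_t$. Here the count comes from pseudo-randomness, roughly $n^tp^{t-1}$ with $p=n^{-1+1/b}$, not from any blow-up. Forbidding $\theta_{q,b'}$ for large $q$, rather than imposing girth, is what makes this density available for general $b$.

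Finally, the paper uses the bound $\ex(n,\c{F}^q_{P_t,k+1})=O(n^{1+1/b})$ from the proof of Proposition~\ref{family bound given diameter}. It also uses the identity $1+\frac{t-1}{b}=k+\frac{r+2}{b}$, which follows from $t-1=(k-1)b+r+2$. Together these rewrite the count as $\Omega(\ex(n,\c{F}^q_{P_t,k+1})^{r+2}n^{k-2-r})$. Note that this requires pinning down $\ex(n,\c{F}^q_{P_t,k+1})$, which your plan of working with an arbitrary extremal graph was designed to avoid.
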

	
	To prove this we will need a technical tool based on the breakthrough result of Bukh and Conlon~\cite{BC2018} using random polynomial graphs to solve a finite family version of the rational exponents conjecture.
	
	To this end, we say a pair $(F,R)$ is a \textbf{rooted graph} if $F$ is a graph and $R\sub V(F)$.  Given a graph $F$ and a set of vertices $S\sub V(F)$, we define $e_S(F)$ to be the number of edges of $F$ which are incident to a vertex of $S$.  We define the \textbf{rooted density} of a rooted graph $(F,R)$ to be $\min_{S\sub V(F)\sm R} \frac{e_S(F)}{|S|}$.
	
	\begin{theorem}[\cite{S24}, Proposition 2.1]\label{random polynomials general}
		If $H$ is a graph and if $(F_1,R_1),\ldots,(F_t,R_t)$ are rooted graphs with rooted density at least $b/a$ for some integers $a,b\ge 1$, then there exists some $q_0$ depending only on $H$ and $\{(F_1,R_1),\ldots,(F_t,R_t)\}$ such that for all $q\ge q_0$,
		\[
		\ex(n,H,\{(F_1)_{R_1}^q,\ldots,(F_t)_{R_t}^q\})=\Omega(n^{v(H)-\frac{a}{b}e(H)}).
		\]
	\end{theorem}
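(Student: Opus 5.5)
The plan is to follow the Bukh--Conlon random algebraic graph method, tuned so that the edge probability is $n^{-a/b}$. Fix a large prime power $Q$ and set $n:=Q^{b}$. Take $V(G)=\mathbb{F}_Q^{b}$, and choose $a$ independent, uniformly random symmetric polynomials $f_1,\dots,f_a:\mathbb{F}_Q^{b}\times\mathbb{F}_Q^{b}\to\mathbb{F}_Q$ of degree at most $d$ in each variable block. Declare $xy\in E(G)$ (for $x\ne y$) if and only if $f_1(x,y)=\cdots=f_a(x,y)=0$. Here $d$ is a constant chosen large in terms of $H$ and the $F_i$.

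Two standard facts drive the argument. First, for any set of at most $d$ distinct pairs, the values of such random polynomials on those pairs are independent and uniform, so each edge appears with probability $Q^{-a}=n^{-a/b}$, and any family of at most $d$ edges appears jointly with the product probability. Taking $d\ge e(H)$, the expected number of copies of $H$ is $\Theta(n^{v(H)}\cdot n^{-\frac ab e(H)})$. A second-moment or Markov-type argument then shows this count is typical up to constants. We will in any case only need the expectation together with the deletion step below.

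The key step, and the main obstacle, is controlling the forbidden graphs $(F_i)_{R_i}^q$. Fix $i$ and an injective image $\rho$ of $R_i$ in $V(G)$, and let $X_\rho$ be the number of extensions of $\rho$ to a copy of $F_i$. I will combine two ingredients.
\begin{enumerate}
\item A moment bound. $\E[X_\rho^s]$ counts $s$-tuples of extensions. Grouping these by the union of their non-root vertex sets $S$, each group contributes about $n^{|S|}Q^{-a\,e_S}=n^{|S|-\frac ab e_S}$. This is $O(1)$ precisely because the rooted density condition gives $e_S/|S|\ge b/a$ for every $S\subseteq V(F_i)\setminus R_i$, and it requires $d\ge s\,e(F_i)$.
\item The Bukh--Conlon algebraic dichotomy. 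The set of extensions of $\rho$ is the set of $\mathbb{F}_Q$-points of a variety of bounded complexity. By Lang--Weil type estimates, $X_\rho$ is therefore either at most a constant $C$ or at least $Q/2$.
\end{enumerate}
Markov's inequality applied to $X_\rho^s$ gives $\Pr[X_\rho\ge C]=\Pr[X_\rho\ge Q/2]=O(Q^{-s})$. Choosing $s$ large makes this $O(n^{-M})$ for any desired $M$.

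Call $\rho$ bad if $X_\rho>C$. Since there are at most $n^{|R_i|}$ choices of $\rho$, we can choose $s$ large enough that the expected number of bad roots is $o(1)$, or at least small compared with what we need. For each bad $\rho$, delete one vertex of the image of $\rho$. Each deleted vertex destroys at most $O(n^{v(H)-1})$ copies of $H$ in expectation, and with $s$ large this total loss is negligible relative to $n^{v(H)-\frac ab e(H)}$. The resulting graph has every root-image extending to at most $C$ copies of each $F_i$, so for $q\ge q_0:=C+1$ it contains no $(F_i)_{R_i}^q$. Indeed, a copy of $(F_i)_{R_i}^q$ would give $q$ distinct extensions of a single root-image. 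Fixing an outcome of the random choice that beats its expectation yields the stated lower bound $\Omega(n^{v(H)-\frac ab e(H)})$ for $n=Q^b$. General $n$ follows by choosing the largest $Q$ with $Q^b\le n$ and using Bertrand-type density of primes.
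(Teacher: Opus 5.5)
Your proposal is correct and follows the Bukh--Conlon random algebraic method on which this result rests: the paper gives no proof of its own and simply imports the statement from \cite{S24}, and your use of symmetric polynomials (so that non-bipartite $H$ can occur) is the right choice. One step needs writing out. In the moment bound the union of $s$ extensions is not a subset of $V(F_i)\setminus R_i$, so add the extensions one at a time and note that every edge of $F_i$ incident to the set $S_j$ of newly used vertices is new; this gives at least $\sum_j e_{S_j}(F_i)\ge \frac{b}{a}\sum_j |S_j|$ edges in the union. Also apply the dichotomy to $W(\mathbb{F}_Q)\setminus D(\mathbb{F}_Q)$, where $D$ is the degenerate (non-injective) locus, and observe that $C$ depends on $d$ and hence on $s$, which is harmless because $s$ depends only on $H$ and the $F_i$.
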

	
	Actually, the result from~\cite{S24} is slightly stronger and allows one to forbid not just $(F_i)_{R_i}^q$ (which is the graph obtained by taking $q$ copies of $F_i$ which agree on $R_i$ and which are otherwise disjoint), but in fact the family of graphs which can be obtained by taking $q$ copies of $F_i$ which agree on $R_i$ (but which are not necessarily disjoint outside of $R_i$).
	
	One can check that the rooted path graph $(P_{b+1},\{v_1,v_{b+1}\})$ has rooted density $\frac{b}{b-1}$, and also that $(P_{b+1})_{\{v_1,v_{b+1}\}}^q$ is the theta graph $\theta_{q,b}$.  As such, we obtain the following corollary of \Cref{random polynomials general}.
	
	\begin{corollary}\label{random polynomials theta}
		For every graph $H$ and integer $b\ge 2$, there exists some $q$ sufficiently large such that
		\[\ex(n,H,\{\theta_{q,2},\theta_{q,3},\ldots,\theta_{q,b}\})=\Omega(n^{v(H)-(1-1/b)e(H)}).\]
	\end{corollary}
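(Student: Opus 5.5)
The plan is to apply \Cref{random polynomials general} to the family $\{\theta_{q,2},\ldots,\theta_{q,b}\}$, viewed as $(F_i)_{R_i}^q$ with $F_i=P_{i+1}$ and $R_i$ its two endpoints. The one thing to verify is the rooted density claim: every rooted path $(P_{i+1},\{v_1,v_{i+1}\})$ with $2\le i\le b$ has rooted density at least $\frac{b}{b-1}$.

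For this, fix $i$ and a nonempty set $S$ of internal vertices $v_2,\ldots,v_i$. Decompose $S$ into maximal runs of consecutive internal vertices. A run of length $s$ is incident to exactly $s+1$ edges of the path, and distinct runs share no incident edges, because runs are separated by at least one vertex outside $S$. Hence $e_S(F)\ge |S|+(\#\text{runs})$, so
\[
\frac{e_S(F)}{|S|}\ \ge\ \min_{\text{run}}\frac{s+1}{s}\ \ge\ \frac{(i-1)+1}{i-1}=\frac{i}{i-1}\ \ge\ \frac{b}{b-1},
\]
since $s\le i-1$ and $x\mapsto\frac{x}{x-1}$ is decreasing. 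The minimum is attained by taking $S$ to be all internal vertices of $P_{b+1}$.

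Next, check that $(P_{i+1})_{\{v_1,v_{i+1}\}}^q=\theta_{q,i}$. This is immediate from the definitions: both graphs consist of $q$ paths of length $i$ that share their endpoints and are otherwise disjoint.

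Finally, apply \Cref{random polynomials general} with $a=b-1$ and with $b$ in the role of $b$. This gives
\[
\ex(n,H,\{\theta_{q,2},\ldots,\theta_{q,b}\})=\Omega\bigl(n^{v(H)-\frac{b-1}{b}e(H)}\bigr)=\Omega\bigl(n^{v(H)-(1-1/b)e(H)}\bigr)
\]
for all $q\ge q_0$, which is the claimed bound. None of these steps is a real obstacle. The only point needing care is the run decomposition, which rules out sets $S$ spread over several runs achieving a smaller ratio.
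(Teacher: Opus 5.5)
Your proposal is correct and follows the paper's own route: view each $\theta_{q,i}$ as $(P_{i+1})^q_{\{v_1,v_{i+1}\}}$, show these rooted paths have rooted density at least $\frac{b}{b-1}$, and apply \Cref{random polynomials general} with $a=b-1$. Your run-decomposition argument proves the density bound for all the paths $P_{i+1}$ with $i\le b$, not just $P_{b+1}$, which is the step the paper leaves as ``one can check.''
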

	We now prove \Cref{tightness for paths}.

	\begin{proof}[Proof of \Cref{tightness for paths}]
		Let $b=\lfloor \frac{t-3}{k-1}\rfloor$, noting by hypothesis $b\ge 2$ and $b=\frac{t-3-r}{k-1}$.  We aim to show that for $q$ large, \[\ex(n,\c{F}_{P_t,k+1}^q)=\Theta(n^{1+1/b})=\Theta(n^{\frac{k+t-4-r}{t-3-r}}),\] and \[\ex(n,P_t,\c{F}_{P_t,k+1}^q)=\Omega(n^{\frac{(k+t-4-r)(r+2)}{t-3-r}+k-2-r}),\] from which the result follows.
		
		The fact that $\ex(n,\c{F}_{P_t,k+1}^q)=O(n^{1+1/b})$ follows from the same proof as \Cref{family bound given diameter} where we in fact showed exactly the bound $O(n^{1+1/b})$ in \eqref{eq:diameter bound} before replacing it with $O(n^{1+\frac{k-1}{d-k}})$ in order to state a simpler which bound which remained valid for $k=1$.
		
		For both lower bounds, we claim that every element of $\c{F}_{P_t,k+1}^q$ contains a theta graph $\theta_{q,b'}$ for some $2\le b'\le b$ as a subgraph.  Indeed, say $(P_t)_R^q\in \c{F}_{P_t,k+1}^q$, meaning that $R$ is a set of vertices such that $P_t-R$ has at least $k+1$ connected components.  It is not difficult to see that this means $|R|\ge k$ and that $P_t-R$ can be written as the union of subpaths $P^{(1)},\ldots,P^{(s)}$ for some $s\ge k+1$.  At most two of these subpaths contain $1$ or $t$, and among the remaining $s-2\ge k-1$ subpaths, the pigeonhole principles implies there must exist some $i$ with
		\[|V(P^{(i)})|\le \frac{|V(P_t)\sm (R\cup \{1,t\})|}{k-1}\le \frac{t-k-2}{k-1}=\frac{t-3}{k-1}-1,\]
		and since $|V(P^{(i)})|$ is an integer this means $|V(P^{(i)})|\le b-1$.  Write the vertex set of this subpath as $\{\ell+1,\ell+2,\ldots,\ell+b'-1\}$ for some $\ell,b'$, noting that $\ell+1\ne 1$ and $\ell+b'-1\ne t$ by hypothesis.  The fact that $P^{(i)}$ is a component of $P_t-R$ then means $\ell,\ell+b'\in R$, and this means the graph $(P_t)_R^q$ contains $\theta_{q,b'}$ as a subgraph, namely the subgraph obtained by taking the $q$ copies of $P^{(i)}$ which all agree on $\ell$ and $\ell +b'$.  Moreover, we have $b'=|V(P^{(i)})|+1\le b$, proving the claim.
		
		With this claim in mind, we can apply \Cref{random polynomials theta} with $H=K_2$  to obtain
		\[\ex(n,\c{F}_{P_t,k+1}^q)\ge \ex(n,\{\theta_{q,2},\theta_{q,3},\ldots,\theta_{q,b}\})=\Omega(n^{1+1/b}).\]
		Similarly, \Cref{random polynomials theta} with $H=P_t$ together with the claim gives
		\begin{equation}\label{equation path tightness}
			\ex(n,P_t,\c{F}_{P_t,k+1}^q)\ge\ex(n,P_t,\{\theta_{q,2},\theta_{q,3},\ldots,\theta_{q,b}\})=\Omega(n^{t-(1-1/b)(t-1)}).
		\end{equation}
		After some standard algebraic manipulations, using the fact that $b=\frac{t-3-r}{k-1}$,~\eqref{equation path tightness} is equivalent to
		\[
		\ex(n,P_t,\c{F}_{P_t,k+1}^q)=\Omega(n^{\frac{(k+t-4-r)(r+2)}{t-3-r}+k-2-r}),
		\]
		as desired.
	\end{proof}

\end{document}